\newtheorem{theorem}{Theorem}[section]
\newtheorem{corollary}[theorem]{Corollary}
\newtheorem{lemma}[theorem]{Lemma}
\newtheorem{proposition}[theorem]{Proposition}
\newtheorem{definition}[theorem]{Definition}
\numberwithin{equation}{section}
\def\r{\mathbb{R}}
\def\rn{\mathbb{R}^N}
\def\z{\mathbb{Z}}
\def\zm{\z/m\z}
\def\zl{\z/\ell\z}
\def\s1{\mathbb{S}^1}
\def\n{\mathbb{N}}
\def\cc{\mathbb{C}}
\def\eps{\varepsilon}
\def\rh{\rightharpoonup}
\def\irn{\int_{\r^N}}
\def\vp{\varphi}
\def\vr{\varrho}
\def\o{\Omega}
\def\bf{\boldsymbol}
\def\cC{\mathcal{C}}
\def\cH^G{\mathcal{H}}
\def\cJ{\mathcal{J}}
\def\cM{\mathcal{M}}
\def\cN{\mathcal{N}}
\def\cP{\mathcal{P}}
\def\supp{\text{supp}}
\def\bar{\overline}
\def\what{\widehat}
\def\d{\,\mathrm{d}}
\def\e{\mathrm{e}}
\author{Mónica Clapp\footnote{M. Clapp was supported by CONAHCYT (Mexico) through the research grant A1-S-10457.}, \ Alberto Saldaña\footnote{A. Saldaña was supported by UNAM-DGAPA-PAPIIT (Mexico) grant IA100923  and by CONAHCYT (Mexico) grant A1-S-10457.}, \ Mayra Soares and Víctor A. Vicente-Benítez\footnote{V. A. Vicente-Ben\'itez was supported by CONAHCYT (Mexico) through the research program {\it Postdoctoral Stays for Mexico 2023}}.}
\title{Optimal pinwheel partitions and pinwheel solutions to a nonlinear Schrödinger system}
\date{}
\begin{document}
\maketitle

\begin{abstract}
We establish the existence of a solution to a nonlinear competitive Schrödinger system whose scalar potential tends to a positive constant at infinity with an appropriate rate. This solution has the property that all components are invariant under the action of a group of linear isometries and each component is obtained from the previous one by composing it with some fixed linear isometry. We call it a \emph{pinwheel solution}. We describe the asymptotic behavior of the least energy pinwheel solutions when the competing parameter tends to zero and to minus infinity. In the latter case the components are segregated and give rise to an optimal \emph{pinwheel partition} for the Schrödinger equation, that is, a partition formed by invariant sets that are mutually isometric through a fixed isometry. \\

\noindent Keywords: Schr\"odinger system, weakly coupled, competitive, segregated solutions, phase separation, optimal partition.\\
MSC2020: 
35J47,  %Second-order elliptic systems
35J50,  %Variational methods for elliptic systems
35B06, %Symmetries, invariants, etc. in context of PDEs
35B40 %Asymptotic behavior of solutions to PDEs
.

\end{abstract}

\section{Introduction}

Consider the weakly coupled competitive Schrödinger system
\begin{equation} \label{eq:system}
\begin{cases}
-\Delta u_i + V(x)u_i= |u_i|^{2p-2}u_i+\sum\limits_{\substack{j=1\\j\neq i}}^\ell\beta|u_j|^p|u_i|^{p-2}u_i, \\
u_i\in H^1(\rn),\qquad i=1,\ldots,\ell,
\end{cases}
\end{equation}
where $N\geq 4$, $\beta<0$, $p\in(1,\frac{N}{N-2})$ and $V\in\cC^0(\rn)$. 

This kind of systems model interesting physical phenomena, for instance, the behavior of a mixture of Bose-Einstein condensates that overlap in space \cite{esry,dhall}. The existence and behavior of their solutions raises interesting and challenging questions from a purely mathematical point of view. They have been the subject of extensive study, particularly when the nonlinearity is cubic, $N=2,3$ and the scalar potential $V$ is constant. There is abundant literature on the matter. We refer to the article by Li, Wei, and Wu \cite{lww} for a list of references.

It is well known that the Schrödinger equation
\begin{equation}\label{eq:single_intro}
-\Delta u + V(x)u = |u|^{2p-2}u,\qquad u\in H^1(\rn),
\end{equation}
has a least energy solution if the potential $V$ is constant and positive. If $V$ is bounded away from zero and has a positive limit at infinity, the existence of a least energy solution depends on the way this limit is approached. For instance, if $V$ approaches its limit at infinity from below with an appropriate rate, then a least energy solution exists; see for example \cite{c}.

Systems behave differently. As Lin and Wei show in \cite[Theorem 1]{lw}, when $V$ is constant the system \eqref{eq:system} does not have a least energy solution. On the other hand, Sirakov showed in \cite{si} that it does have a least energy radial solution (i.e., each component $u_i$ is radial). Looking for solutions whose components have other symmetries does not always lead to a positive answer. A necessary and sufficient condition for the existence of a symmetric least energy solution, for constant $V$, is given by \cite[Theorem 3.4]{cs}.

Here we look for solutions $\bf u=(u_1,\ldots,u_\ell)$ whose components are not only invariant under the action of some group $G$ of linear isometries of $\rn$, but each component $u_{i+1}$ is obtained from the previous one $u_i$ by composing it with a fixed linear isometry. Nontrivial solutions of this kind have been called \emph{pinwheel solutions} in \cite{cp,cfs}.

Our assumptions on the potential are as follows:
\begin{itemize}
\item[$(V_1)$] $V$ is radial,
\item[$(V_2)$] $0<\inf_{x\in\rn}V(x)$ \ and \ $V(x)\to V_\infty>0$ as $|x|\to\infty$,
\item[$(V_3^m)$] There exist $\kappa\in(2\sin\frac{\pi}{m},2)$ and $\bar{C}>0$ such that
$$V(x)-V_\infty\leq \bar{C}\e^{-\kappa\sqrt{V_\infty}|x|}\qquad\text{for every \ }x\in\rn$$
and some number $m\in\n$.
\end{itemize}
The role of the number $m$ in assumption $(V_3^m)$ is related to the kind of symmetries that we consider, which we now describe.

Let $O(M)$ denote the group of linear isometries of $\r^M$. In this work, the group $G$ is either the additive group $\zm:=\{0,\ldots,m-1\}$ of integers modulo $m$, or the group $G_m:=\z/m\z\times O(N-4)$, acting on a point $x=(z_1,z_2,y)\in\cc\times\cc\times\r^{N-4}\equiv\rn$ as
\begin{align*}
&jx:=(\e^{2\pi\mathrm{i}j/m}z_1,\e^{-2\pi\mathrm{i}j/m}z_2,y)\quad\text{if \ }j\in \z/m\z, \\
&gx:=(\e^{2\pi\mathrm{i}j/m}z_1,\e^{-2\pi\mathrm{i}j/m}z_2,\alpha y)\quad\text{if \ }g=(j,\alpha)\in \z/m\z\times O(N-4).
\end{align*}
The linear isometry $\vr_\ell$ relating the components is
\begin{equation*}
\vr_\ell x:=\left(\Big(\cos\frac{\pi}{\ell}\Big)z+\Big(\sin\frac{\pi}{\ell}\Big)\tau z,\,y\right)\quad\text{for \ }x=(z,y)\in\cc^2\times\r^{N-4},
\end{equation*}
where $\tau(z_1,z_2):=(-\overline{z}_2,\overline{z}_1)$ and $\overline{z}_i$ is the complex conjugate of $z_i$. The solutions of \eqref{eq:system} that we obtain satisfy the following two conditions 
\begin{itemize}
\item[$(S_1^G)$] $u_i$ is $G$-invariant for every $i=1,\ldots,\ell$,
\item[$(S_2)$] \label{S_2} $u_{i+1}=u_i\circ\vr_\ell$ for $i=1,\ldots,\ell-1$ and $u_1=u_\ell\circ\vr_\ell$,
\end{itemize}
for one of the two groups $G$ introduced above. To emphasize which group $G$ is being considered, a nontrivial solution of \eqref{eq:system} that satisfies $(S_1^G)$ and $(S_2)$ will be called a $G$\emph{-pinwheel solution}. 

Let $\mathfrak{c}_\infty$ be the least energy level of a solution to the equation
\begin{equation}\label{eq:singleeqVinftyintro}
-\Delta u + V_\infty u = |u|^{2p-2}u,\qquad u\in H^1(\rn).
\end{equation}
This energy level is achieved at a radial positive solution $\omega\in H^1(\rn)$, which is unique up to translations and sign. 

The following is our main existence result.

\begin{theorem} \label{thm:existence}
\begin{itemize}
\item[$(i)$]If $N=4$ or $N\geq 6$, $V$ satisfies $(V_1),(V_2),(V_3^m)$, $m$ is even  and $m>2\ell$, then the system \eqref{eq:system} has a least energy $\left(\z/m\z\times O(N-4)\right)$-pinwheel solution $\bf u=(u_1,\ldots,u_\ell)$ that satisfies
\begin{align*}
\frac{p-1}{2p}\irn(|\nabla u_i|^2+V(x)u_i^2)<m \mathfrak{c}_\infty\quad\text{for every \ }i=1,\ldots,\ell.
\end{align*}
\item[$(ii)$]If $N\geq 4$, $V=V_\infty$, $m$ is even and $m>2\ell$, then the system \eqref{eq:system} with $V=V_\infty$ has a least energy $\left(\z/m\z\right)$-pinwheel solution $\bf u=(u_1,\ldots,u_\ell)$ that satisfies
\begin{align*}
\frac{p-1}{2p}\irn(|\nabla u_i|^2+V_\infty u_i^2)<m\mathfrak{c}_\infty\quad\text{for every \ }i=1,\ldots,\ell.
\end{align*}
\end{itemize}
\end{theorem}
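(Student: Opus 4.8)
The plan is to construct pinwheel solutions variationally as constrained minimizers on a Nehari-type manifold inside the space of symmetric functions. First I would set up the functional framework: let $H^1_G(\rn)$ be the subspace of $G$-invariant functions and let $\vr_\ell$ act on $\ell$-tuples by cyclically permuting the components via composition with $\vr_\ell$; the relevant configuration space is the subspace $\cH^G$ of $(H^1_G(\rn))^\ell$ on which $(S_2)$ holds. Since $\vr_\ell^{2\ell}$ acts trivially and the $G$-action is by linear isometries preserving $V$ (here $(V_1)$, radial symmetry of $V$, is used together with the fact that $\vr_\ell$ and the $G$-action commute — this is where $m>2\ell$ and $m$ even enter, guaranteeing that the symmetry group generated by $G$ and $\vr_\ell$ is compatible and that the energy functional is well-defined and invariant on $\cH^G$), the energy functional $\cJ$ associated with \eqref{eq:system} restricts to $\cH^G$, and by the principle of symmetric criticality a critical point of $\cJ|_{\cH^G}$ is a genuine solution of \eqref{eq:system} satisfying $(S_1^G)$ and $(S_2)$.

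Next I would minimize $\cJ$ over the Nehari set $\cN^G=\{\bf u\in\cH^G: u_i\neq 0,\ \partial_{u_i}\cJ(\bf u)[u_i]=0\ \forall i\}$; because $(S_2)$ forces all $u_i$ to be isometric copies of $u_1$, this reduces to a single Nehari-type constraint, and the least energy pinwheel level is $c^G:=\inf_{\cN^G}\cJ$. One shows $c^G>0$ by the usual Sobolev-embedding argument, and that minimizing sequences are bounded since $\beta<0$ makes the coupling terms have a favorable sign in the energy. The crucial analytical step is a compactness/concentration argument: one must rule out loss of mass at infinity. Here is where $(V_2)$ and especially $(V_3^m)$ are used — the decay rate $\kappa>2\sin\frac{\pi}{m}$ is exactly what is needed so that the energy cost of the interaction between a bump and its $m$ symmetric copies (which sit at mutual distance governed by $\sin\frac{\pi}{m}$ and interact through $\omega$-tails of order $\e^{-\sqrt{V_\infty}|x|}$) is dominated by the energy gain from the potential well $V-V_\infty\le 0$ effectively created by $(V_3^m)$. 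Concretely, I would exhibit a test configuration built from $m/(2\ell)$ (or a suitable integer multiple ensuring the count works for even $m>2\ell$) translated and rotated copies of the ground state $\omega$ placed at a large radius, and estimate $\cJ$ on it to obtain the strict inequality $c^G<$ (the energy of the "solution at infinity", which would be $m\mathfrak c_\infty$ after accounting for the symmetry orbit and the fact that the components are pairwise segregated as the bumps move apart). This strict inequality is precisely the statement $\frac{p-1}{2p}\irn(|\nabla u_i|^2+V(x)u_i^2)<m\mathfrak c_\infty$ claimed in the theorem, and it simultaneously yields compactness of the minimizing sequence via a Brezis–Lieb / splitting lemma in the symmetric space: any noncompactness would produce a pinwheel solution at infinity of energy $\ge m\mathfrak c_\infty$, contradicting the strict bound.

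Finally, for part $(ii)$ with $V\equiv V_\infty$ and $G=\zm$, the argument is the same but now $(V_3^m)$ is vacuous and the strict inequality $c^{\zm}<m\mathfrak c_\infty$ must come purely from the attractive geometry: the $m$ symmetric bumps, being copies of the positive ground state, interact through the system with sign determined by $\beta<0$ between different components but with a net attractive lower-order correction within the symmetry orbit — one computes that placing the bumps at the optimal finite distance strictly lowers the energy below the decoupled value $m\mathfrak c_\infty$, again using $m$ even and $m>2\ell$ to match the components of the pinwheel to the orbit structure. The compactness then follows because, $V$ being constant, the only obstruction is translation invariance, which is broken on $\cH^{\zm}$ by the requirement that the configuration be invariant under a nontrivial rotation group (so the "center of mass" is pinned at the origin for any finite-energy symmetric configuration with $m\ge 3$, and the degenerate cases are excluded by $m>2\ell\ge2$). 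I expect the main obstacle to be the delicate energy expansion in the compactness step: one needs precise asymptotics of the interaction integrals $\irn\omega(x)^{p}\omega(x-\xi)^{p}\,\d x$ and $\irn\omega^{2p-1}(x)\omega(x-\xi)\,\d x$ as $|\xi|\to\infty$ (of order $|\xi|^{-(N-1)/2}\e^{-\sqrt{V_\infty}|\xi|}$), combined with the $(V_3^m)$ estimate, to verify that the leading-order balance indeed produces a strict gain — the constant $2\sin\frac{\pi}{m}$ in $(V_3^m)$ being the signature of this computation.
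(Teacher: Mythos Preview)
Your overall strategy---variational setup on the pinwheel-symmetric space, Nehari constraint, strict energy inequality via a test configuration of translated copies of $\omega$, and compactness below the threshold---matches the paper's. But several key mechanisms are misidentified, and two of them are genuine gaps.

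\textbf{The role of $m>2\ell$.} You attribute $m>2\ell$ to making the symmetry setup work. It does not: the action of $\zl$ on $\mathcal{H}^G$ is well defined for any even $m$ (only $m$ even is needed so that $\vr_\ell^\ell$ acts trivially, via $(-z,y)=\vartheta_m^{m/2}(z,y)$). The condition $m>2\ell$ is purely geometric and enters only in the energy estimate for the test function: placing $m$ bumps (not $m/(2\ell)$) per component at the orbit $R\vartheta_m^j\xi_0$, the nearest-neighbor distance \emph{within one component} is $2R\sin\frac{\pi}{m}$, while the minimal distance \emph{between different components} is $2R\sin\frac{\pi}{2\ell}$. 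The inequality $m>2\ell$ says the former is strictly smaller, so the dominant interaction as $R\to\infty$ is the attractive one $\int\omega^{2p-1}\omega(\cdot-\xi)$ between same-component bumps, and the repulsive cross-terms $\beta\int|u_i|^p|u_j|^p$ are of strictly smaller order $o(\eps_R)$. Without this ordering the strict inequality $c_V^G<\ell m\mathfrak{c}_\infty$ would fail.

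\textbf{The role of $(V_3^m)$.} It does not create a potential well; there is no assumption $V\le V_\infty$. What $(V_3^m)$ does is bound $(V-V_\infty)^+$ by $\bar C\e^{-\kappa\sqrt{V_\infty}|x|}$ with $\kappa>2\sin\frac{\pi}{m}$, so that the penalty $\int(V-V_\infty)^+\omega^2(\cdot-R\xi_j)$ is $o(\eps_R)$ and is beaten by the attractive same-component interaction. The strict gain comes entirely from the bump interaction, not from the potential.

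\textbf{Compactness in part $(ii)$.} Your claim that the $\zm$-invariance pins the center of mass is wrong: the $\zm$-action is only on the $\cc^2$ factor and is trivial on $\r^{N-4}$, so translations in $\r^{N-4}$ remain a genuine noncompact symmetry of the problem when $V\equiv V_\infty$ and $N\geq5$. The paper does \emph{not} recover compactness here; instead it proves a dichotomy \emph{up to translation in $\r^{N-4}$}: after translating by suitable $(0,y_k)$, a minimizing sequence either converges or concentrates at infinity in the $\cc^2$-direction with energy exactly $\ell m\mathfrak{c}_\infty$. This is also why part $(i)$ requires the extra $O(N-4)$-symmetry (and excludes $N=5$): it is used to rule out escape in the $\r^{N-4}$-direction via an infinite-orbit argument, a step that has no analogue in your outline.
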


For the precise meaning of \emph{least energy $G$-pinwheel solution}, we refer to Section \ref{sec:variational_setting}. 

It is shown in \cite[Theorem 3.4]{cs} that, if $N\geq 5$, $V$ is constant and $G=\z/m\z$, a least energy solution satisfying only $(S_1^{G})$ but not $(S_2)$ does not exist. So this last condition is necessary.

Pinwheel symmetries were first considered by Wei and Weth in \cite{ww} with the aim of obtaining non-radial solutions of a Schrödinger system of two equations with cubic nonlinearity in dimensions $N=2,3$. They have also been used, for instance, in \cite{pv, pw, cp, cfs}.

When $V$ approaches its limit at infinity from below, with an appropriate rate, the existence of a least energy pinwheel solution was established in \cite{cp} under simpler symmetry assumptions. Those symmetries are not enough to guarantee existence when the potential approaches its limit from above, or even for constant potentials. Let us elaborate on this.

The proof of Theorem \ref{thm:existence} is variational. Solutions are sought that minimize a constrained problem for the energy functional of the system \eqref{eq:system}, having the above described symmetries, see \eqref{cv:eq}. The main difficulty, as usual, is the lack of compactness. We analyze in detail the behavior of minimizing sequences that converge to the least energy level and, using concentration compactness arguments, we find that there are two possible scenarios: either the sequence has a subsequence that converges to a least energy solution, or there is a subsequence whose components look like a sum of copies of the ground state $\omega$ of \eqref{eq:singleeqVinftyintro}, centered on the elements of the $G$-orbit of some sequence of points tending to infinity, see Theorem \ref{thm:splitting}. In the latter case, the least energy of the system \eqref{eq:system} is $m\ell \mathfrak{c_\infty}$. So, in order to exclude this case, we need to construct a test function whose energy is strictly less than this number. The obvious test function is the one suggested by the second scenario. Showing that its energy is less than $m\ell \mathfrak{c_\infty}$ requires fine estimates for the interactions between the copies of $\omega$.  Here it becomes relevant that the $G$-orbit of some point $\xi$ contains two points whose distance is strictly less than the distances between the $G$-orbits of any pair of points $\xi,\vr_\ell\xi,\ldots,\vr_\ell^{\ell-1}\xi$. This condition is not satisfied by the symmetries considered in \cite{cp}.

When the potential is constant ($V=V_\infty$), as in statement $(ii)$ of Theorem \ref{thm:existence}, the system is invariant under translations. So one cannot expect that every minimizing sequence contains a convergent subsequence. This is only true up to translation. On the other hand, less symmetries are required in this case and the argument can also be applied in dimension $N=5$.

Note that, if $\beta\leq\frac{-1}{\ell-1}$, no component of a nontrivial solution $(u_1,\ldots,u_\ell)$ of \eqref{eq:system} satisfying $(S_2)$ can be radial. In fact, it cannot happen that $u_i=u_i\circ\vr_\ell$ for some $i$, as this would imply that $u_1=\cdots=u_\ell=:u$ and, thus, $u$ would be a solution to the equation
\begin{equation*}
-\Delta u + V(x) u = (1+(\ell-1)\beta)|u|^{2p-2}u,\qquad u\in H^1(\rn),
\end{equation*}
which has only the trivial solution.

The following two results describe the asymptotic behavior of least energy $G$-pinwheel solutions as the competition parameter $\beta$ goes to zero. As one might expect, the components of the solutions decouple.

\begin{theorem} \label{thm:betatozero1}
Let $N=4$ or $N\geq 6$, and $G:=\z/\z_m\times O(N-4)$ with $m$ even, $m>2\ell$. Assume that $V$ satisfies $(V_1),(V_2),(V_3^m)$. Let $\beta_k \to 0^- $ and $\bf u_k = (u_{k,1},\ldots, u_{k,\ell})$ be a least energy $G$-pinwheel solution to \eqref{eq:system} with $\beta =\beta_k$ such that $u_{k,j}\geq 0$ for every $k\in\n$, $j=1,\ldots,\ell$. Then, after passing to a subsequence, $u_{k, j}\to u_{0, j}$ strongly in $H^1(\rn)$, $u_{0,1}, \ldots , u_{0,\ell}$ satisfy $(S_1^G)$ and $(S_2)$, and $u_{0, j}$ is a positive least energy solution to the problem
$$-\Delta u + V(x)u = |u|^{2p-2}u,\qquad u\in H^1(\rn)^G,$$
where $H^1(\rn)^G:=\{u\in H^1(\rn):u\text{ is }G\text{-invariant}\}$. Furthermore, 
$$\frac{p-1}{2p}\irn(|\nabla u_{0,j}|^2+V(x)u_{0,j}^2) <m\mathfrak{c}_\infty\qquad\text{for every \ }j=1,\ldots,\ell.$$
\end{theorem}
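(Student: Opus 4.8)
The plan is to extract a limit from the family $\{\bf u_k\}$ using uniform energy bounds, and then identify the limit as a decoupled ground state in the symmetric space $H^1(\rn)^G$. First I would record that, by Theorem \ref{thm:existence}$(i)$, for each $k$ the energy of each component satisfies $\frac{p-1}{2p}\irn(|\nabla u_{k,j}|^2+V(x)u_{k,j}^2)<m\mathfrak{c}_\infty$, so the sequence $\{\bf u_k\}$ is bounded in $H^1(\rn)^\ell$ uniformly in $k$ (this uses $\beta_k<0$, which makes the coupling terms favorable in the energy–Nehari normalization, so the quadratic part is controlled by the least energy level). Passing to a subsequence, $u_{k,j}\rightharpoonup u_{0,j}$ weakly in $H^1(\rn)$ and a.e.; the conditions $(S_1^G)$ and $(S_2)$ pass to the weak limit since they are closed linear constraints. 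One also checks $u_{0,j}\geq 0$.

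Next I would show the convergence is strong and each $u_{0,j}$ is nontrivial. Since $\beta_k\to 0^-$ and each $\|u_{k,j}\|$ is bounded, the mixed terms $\beta_k\irn|u_{k,i}|^p|u_{k,j}|^p$ tend to $0$ (here the exponent $p<\frac{N}{N-2}$ and boundedness in $H^1$ give control via Sobolev embedding); hence in the limit the system decouples and each limiting equation becomes $-\Delta u_{0,j}+V(x)u_{0,j}=|u_{0,j}|^{2p-2}u_{0,j}$ in $H^1(\rn)^G$. To rule out $u_{0,j}\equiv 0$ and to get strong convergence, I would use that $\bf u_k$ minimizes the constrained energy and compare its level with $\ell$ copies of the symmetric least energy level $m\mathfrak{c}_\infty^G$ (the least energy of a $G$-invariant solution of the single equation, which is finite and positive by the existence statement). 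The key is a lower semicontinuity / splitting argument: the limit of the least energy $G$-pinwheel levels as $\beta_k\to 0^-$ is at least $\ell$ times the symmetric single-equation level, while the product test function built from $\ell$ rotated copies of a symmetric ground state shows the reverse inequality; matching these forces no loss of mass, hence strong $H^1$ convergence, and forces each $u_{0,j}$ to be a least energy element of $H^1(\rn)^G$. The strict bound $\frac{p-1}{2p}\irn(|\nabla u_{0,j}|^2+V(x)u_{0,j}^2)<m\mathfrak{c}_\infty$ then follows by passing to the limit in the strict inequality of Theorem \ref{thm:existence}$(i)$, using strong convergence (or, alternatively, by using that the symmetric level is strictly below $m\mathfrak{c}_\infty$, which is precisely what excludes the non-compact scenario in the proof of Theorem \ref{thm:existence}).

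The main obstacle I anticipate is the two-sided comparison of energy levels as $\beta_k\to 0^-$: one must show $\lim_{\beta\to 0^-} c_\beta^{G\text{-pw}} = \ell\, c^{G}_{\mathrm{single}}$, where the $\geq$ direction requires ruling out that mass escapes to infinity (handled by the strict inequality $c^{G}_{\mathrm{single}}<m\mathfrak{c}_\infty$, i.e. the same concentration-compactness dichotomy of Theorem \ref{thm:splitting}, now applied componentwise since the coupling is asymptotically negligible), and the $\leq$ direction requires checking that the natural product/sum test function made of $\ell$ copies of a symmetric ground state rotated by powers of $\vr_\ell$ has energy converging to $\ell\,c^{G}_{\mathrm{single}}$ as $\beta\to 0^-$ — here the interaction terms between the rotated copies must be shown to contribute negligibly, which is a standard but delicate exponential-decay estimate for $\omega$, entirely analogous to (and simpler than) the test-function estimate already carried out in the proof of Theorem \ref{thm:existence}. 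Once the levels match, strong convergence and the identification of the limit are routine.
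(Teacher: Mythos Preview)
Your outline is correct and follows essentially the same route as the paper: uniform energy bounds, the concentration-compactness dichotomy (re-run with varying $\beta_k$) to rule out escape to infinity via the strict inequality $c_{V,k}^G<\ell m\mathfrak{c}_\infty$, and a two-sided comparison $\lim_{k\to\infty}c_{V,k}^G=\ell\,\mathfrak{c}^G$ to force strong convergence and identify the limit as a least energy $G$-invariant solution.

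One small correction on the $\leq$ direction: you do not need any exponential-decay estimate, and you should not assume that a symmetric ground state for $\mathfrak{c}^G$ exists a priori --- its existence is in fact part of what the theorem establishes. The paper instead takes a \emph{minimizing sequence} $(v_k)$ for $\mathfrak{c}^G$, builds the pinwheel test function from $v_k$, and observes that the coupling terms $\beta_k\irn|v_{k,i}|^p|v_{k,j}|^p$ vanish simply because $\beta_k\to 0$ while the $v_{k,j}$ remain bounded in $H^1(\rn)$; no smallness of the interaction integrals themselves is required, so the step is trivial rather than ``delicate.''
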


\begin{theorem} \label{thm:betatozero2}
Let $N\geq 4$, $G:=\z/\z_m$ with $m$ even, $m>2\ell$, and $V=V_\infty$. Let $\beta_k \to 0^- $ and $\bf v_k = (v_{k,1},\ldots,v_{k,\ell})$ be a least energy $G$-pinwheel solution to \eqref{eq:system} with $\beta =\beta_k$ satisfying $v_{k,j}\geq 0$ for every $k\in\n$, $j=1,\ldots,\ell$. Then, after passing to a subsequence,  there is a sequence $(y_k)$ in $\r^{N-4}$ such that, if we set
$$\bf u_k:=(u_{k,1},\ldots,u_{k,\ell})\quad\text{with \ }u_{k,j}(z,y):=v_{k,j}(z,y-y_k)\text{ \ for all \ }(z,y)\in\cc^2\times\r^{N-4},$$
then $u_{k,j}\to\omega$ strongly in $H^1(\rn)$ for every $j=1,\ldots,\ell$, where $\omega$ is the least energy positive radial solution to the problem
$$-\Delta u + V_\infty u = |u|^{2p-2}u,\qquad u\in H^1(\rn).$$
\end{theorem}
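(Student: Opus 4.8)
The plan is to combine a priori energy bounds, a concentration–compactness analysis of the minimizers $\mathbf v_k$, and the uniqueness of the ground state $\omega$. Write $\|u\|^2:=\irn(|\nabla u|^2+V_\infty u^2)$, so that $\mathfrak{c}_\infty=\frac{p-1}{2p}\|\omega\|^2$, and let $c_\beta$ denote the least energy $G$-pinwheel level, which by the variational setup of Section~\ref{sec:variational_setting} is the infimum of the energy $E_\beta$ of \eqref{eq:system} over the admissible (symmetric, Nehari-type) constraint set and is attained at $\mathbf v_k$ when $\beta=\beta_k$. On the constraint set the energy equals $\frac{p-1}{2p}\sum_i\|u_i\|^2$, and $(S_2)$ forces the components to be mutually isometric, so $c_{\beta_k}=\frac{p-1}{2p}\,\ell\,\|v_{k,1}\|^2$. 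The first step is to show $c_{\beta_k}\to\ell\,\mathfrak{c}_\infty$ while $\|v_{k,1}\|$ stays bounded and bounded away from $0$. For the upper bound I would use the admissible tuple $(t_k\omega,\dots,t_k\omega)$ — admissible because $\omega$ is radial, hence $\zm$-invariant and fixed by $\vr_\ell$ — where $t_k>0$ rescales it onto the constraint set; using $\|\omega\|^2=\irn\omega^{2p}$ one finds $t_k^{2p-2}=(1+(\ell-1)\beta_k)^{-1}\to1$, so $c_{\beta_k}\le E_{\beta_k}(t_k\omega,\dots,t_k\omega)=\ell\,t_k^2\,\mathfrak{c}_\infty\to\ell\,\mathfrak{c}_\infty$. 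Testing the equation for $v_{k,1}$ with $v_{k,1}$ and using $\beta_k<0$, $v_{k,j}\ge0$ gives $\|v_{k,1}\|^2\le\irn v_{k,1}^{2p}$, so $\|v_{k,1}\|$ is bounded below by a Sobolev constant, and the energy upper bound bounds it above.

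Next I would run concentration–compactness on the bounded $\zm$-invariant sequence $(v_{k,1})$. The inequality $\|v_{k,1}\|^2\le\irn v_{k,1}^{2p}$ together with the lower bound rules out vanishing, so there are $R,\delta>0$ and points $\xi_k=(\zeta_k,\eta_k)\in\cc^2\times\r^{N-4}$ with $\int_{B_R(\xi_k)}v_{k,1}^2\ge\delta$. The crucial point is that $(\zeta_k)$ must be bounded: if $|\zeta_k|\to\infty$, the $m$ points $\{j\xi_k:j\in\zm\}$ have mutual distances at least $2\sin\frac{\pi}{m}\,|\zeta_k|\to\infty$, so by $\zm$-invariance $v_{k,1}$ carries mass $\ge\delta$ on $m$ asymptotically disjoint balls; translating each of these bumps to the origin, one obtains (after a subsequence) a nontrivial nonnegative weak limit solving $-\Delta w+V_\infty w=w^{2p-1}$ (the coupling terms carry the vanishing factor $\beta_k$), hence of energy $\ge\mathfrak{c}_\infty$, and the splitting of $\|v_{k,1}\|^2$ over separated profiles gives $\|v_{k,1}\|^2\ge\frac{2p}{p-1}m\,\mathfrak{c}_\infty-o(1)$, i.e.\ $c_{\beta_k}\ge\ell m\,\mathfrak{c}_\infty-o(1)$, contradicting $c_{\beta_k}\le\ell\,\mathfrak{c}_\infty+o(1)$ because $m\ge2$. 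So, along a subsequence, $\zeta_k\to\zeta_\infty$; I then set $y_k:=\eta_k$ and $u_{k,j}(z,y):=v_{k,j}(z,y-y_k)$, which still solve the system with $\beta=\beta_k$, still satisfy $(S_1^G)$ and $(S_2)$ (the group $\zm$ and $\vr_\ell$ act trivially on $y$), and for which $u_{k,1}$ keeps mass $\ge\delta$ inside a fixed ball.

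Passing to a further subsequence, $u_{k,j}\rh u_{0,j}$ in $H^1(\rn)$; the mass bound gives $u_{0,1}\ne0$, and $u_{k,j}=u_{k,1}\circ\vr_\ell^{\,j-1}$ forces $u_{0,j}=u_{0,1}\circ\vr_\ell^{\,j-1}\ne0$, with each $u_{0,j}\ge0$, $\zm$-invariant and, letting $\beta_k\to0$ in the weak formulation, a solution of $-\Delta u_{0,j}+V_\infty u_{0,j}=u_{0,j}^{2p-1}$, so $\frac{p-1}{2p}\|u_{0,j}\|^2\ge\mathfrak{c}_\infty$ for each $j$. Weak lower semicontinuity together with $c_{\beta_k}\le\ell\,\mathfrak{c}_\infty+o(1)$ then forces $\sum_j\frac{p-1}{2p}\|u_{0,j}\|^2\le\ell\,\mathfrak{c}_\infty$, which combined with the per-component bound yields $c_{\beta_k}\to\ell\,\mathfrak{c}_\infty$ and, on a subsequence where the norms converge, $\|u_{k,j}\|\to\|u_{0,j}\|$; with weak convergence this gives $u_{k,j}\to u_{0,j}$ strongly in $H^1(\rn)$ and $\frac{p-1}{2p}\|u_{0,j}\|^2=\mathfrak{c}_\infty$. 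Finally, a nonnegative nontrivial solution of the limit equation of energy $\mathfrak{c}_\infty$ is, by uniqueness of the ground state, a translate $\omega(\cdot-a)$; being also $\zm$-invariant while $\omega$ attains its maximum only at $0$, the center $a$ must be fixed by $\zm$, i.e.\ $a=(0,0,\eta^{(1)})$ with $\eta^{(1)}\in\r^{N-4}$. Replacing $y_k$ by $y_k+\eta^{(1)}$ then gives $u_{k,1}\to\omega$, hence $u_{k,j}=u_{k,1}\circ\vr_\ell^{\,j-1}\to\omega\circ\vr_\ell^{\,j-1}=\omega$ for every $j$, as claimed.

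The step I expect to be the main obstacle is the boundedness of $(\zeta_k)$: one must make rigorous that concentration away from the rotation axis $\{z_1=z_2=0\}$ manufactures at least $m$ separated copies of a ground-state-like profile whose energies add up in the limit, so that the total energy approaches $\ell m\,\mathfrak{c}_\infty>\ell\,\mathfrak{c}_\infty$ — this is precisely where the competitive sign $\beta_k<0$, the $\zm$-symmetry and the inequality $m\ge2$ must all be used at once, essentially re-running the relevant half of the splitting analysis (cf.\ Theorem~\ref{thm:splitting}) in the present $\beta$-dependent regime. The remaining points — the uniform integrability needed to pass to the limit equations, and the bookkeeping of the separated bumps — are routine given the uniform $H^1$ bounds and $\beta_k\to0$.
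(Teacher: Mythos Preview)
Your argument is correct and follows the same concentration--compactness skeleton as the paper: translate in $\r^{N-4}$ to recover mass in a fixed ball, rule out escape of the $\cc^2$-component via the $\zm$-orbit energy splitting, pass to the weak limit in the decoupled scalar equation, upgrade to strong convergence via matching energy bounds, and finally use uniqueness of $\omega$ together with $\zm$-invariance to pin down the center on $\{0\}\times\r^{N-4}$.

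The one genuine difference is the source of the upper bound on $c_{\beta_k}$. The paper (via the proof of Theorem~\ref{thm:betatozero1}) carries over the bound $c_{V_\infty,k}^G\leq c_{V_\infty,1}^G<\ell m\,\mathfrak{c}_\infty$, which rests on Proposition~\ref{prop:existence} and its multi-bump ansatz, and separately produces $\limsup_k c_{V_\infty,k}^G\leq\ell\,\mathfrak{c}^G$ by projecting a minimizing sequence for $\mathfrak{c}^G$. You instead test with the diagonal tuple $(t_k\omega,\dots,t_k\omega)$, exploiting that for $V\equiv V_\infty$ the radial ground state $\omega$ is itself $\zm$-invariant and $\vr_\ell$-fixed; this yields $c_{\beta_k}\leq\ell\,t_k^2\,\mathfrak{c}_\infty\to\ell\,\mathfrak{c}_\infty$ in one stroke, which simultaneously excludes the $|\zeta_k|\to\infty$ scenario (it would force $c_{\beta_k}\geq\ell m\,\mathfrak{c}_\infty$, impossible since $m\geq 2$) and pins down the limit energy. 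This is a neat shortcut available only in the constant-potential case; the paper's route is the one that survives when $V$ is merely asymptotically constant. (A trivial sign: the final recentering should be $y_k\mapsto y_k-\eta^{(1)}$, not $y_k+\eta^{(1)}$.)
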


It is by now well known that, as the repulsive force $\beta$ increases, the components of least energy solutions to weakly coupled systems tend to segregate and to give rise to an optimal partition. This fact was first noticed by Conti, Terracini and Verzini \cite{ctv1,ctv2} and by Chang, Lin, Lin and Lin \cite{clll} and has been established in different contexts, as for example in \cite{cfs,cp,cpt}. Our last two results are concerned with this question. We show that, in our case, as $\beta\to-\infty$ the components of least energy solutions segregate and give rise to an optimal partition that is made up of $G$-invariant sets that are mutually isometric via the isometry $\vr_\ell$. We call it an \emph{optimal $(G,\vr_\ell)$-pinwheel partition}. The precise notion is given in Definition \ref{def:partition}.

\begin{theorem}\label{theorembetainfinity1}
Let $N=4$ or $N\geqslant 6$ and $G=\mathbb{Z}/\mathbb{Z}_m\times O(N-4)$ with $m$ even and $2\sin\frac{\pi}{m}<\sin\frac{\pi}{2\ell}$. Assume that $V$ satisfies $(V_1), (V_2), (V_3^m)$. Let $\beta_k\rightarrow -\infty$ and $\bf u_k=(u_{k,1},\dots, u_{k,\ell})$ be a least energy $G$-pinwheel solution to \eqref{eq:system} with $\beta=\beta_k$ satisfying $u_{k,j}\geq 0$ for every $k\in\n$, $j=1,\ldots,\ell$. Then, after passing to a subsequence,
\begin{itemize}
		\item[$(i)$] $u_{k,j}\rightarrow u_{\infty,j}$ strongly in $H^1(\mathbb{R}^N)$, $u_{\infty,j}\geq 0$, $u_{\infty,j}\neq 0$ for each $j=1,\dots, \ell$, \ $u_{\infty,1}, \dots, u_{\infty,\ell}$ satisfy $(S_1^G)$ and $(S_2)$, and 
		\begin{align*}
u_{\infty,i}u_{\infty,j}=0 \qquad\text{and}\qquad	\lim_{k\to\infty}\int_{\rn}	\beta_ku_{k,j}^pu_{k,i}^p= 0 \qquad \mbox{whenever }\;  i\neq j.
		\end{align*}
	\item[$(ii)$] $u_{\infty,j}\in \mathcal{C}^0(\mathbb{R}^N)$, the restriction of $u_{\infty,j}$ to the set $\Omega_j=\{x\in \mathbb{R}^N: u_{\infty,j}(x)>0\}$ solves
$$-\Delta u + V(x)u = |u|^{2p-2}u,\qquad u\in H^1_0(\o_j)^G,$$
where $H^1_0(\o_j)^G:=\{u\in H_0^1(\o_j):u\text{ is }G\text{-invariant}\}$, and $(\Omega_1,\dots, \Omega_{\ell})$ is an optimal $(G,\vr_\ell)$-pinwheel partition for the problem
$$-\Delta u + V(x)u = |u|^{2p-2}u,\qquad u\in H^1(\rn)^G.$$
	\item[$(iii)$] $\mathbb{R}^N\smallsetminus \bigcup_{j=1}^{\ell}\Omega_j=\mathscr{R}\cup \mathscr{S}$ where $\mathscr{R}\cup \mathscr{S}=\emptyset$, $\mathscr{R}$ is a $\mathcal{C}^{1,\alpha}$-submanifold of $\mathbb{R}^N$ and $\mathscr{S}$ is a closed subset of $\mathbb{R}^N$ with Hausdorff measure $\leq N-2$. Furthermore, if $\xi \in \mathscr{R}$, there exists a pair $i,j$ such that
	\begin{align*}
		\lim_{x\to\xi^+}|\nabla u_{\infty,i}(x)|=\lim_{x\to\xi^{-}}|\nabla u_{\infty,j}(x)|\neq 0,
	\end{align*}
where $x\to \xi^\pm$ are the limits taken from the opposite sides of $\mathscr{R}$ and, if $\xi \in \mathscr{S}$, then 
\begin{align*}
	\lim_{x\rightarrow \xi}|\nabla u_j(x)|=0 \quad \mbox{ for each }\, j=1,\dots, \ell.
\end{align*}
\item[$(iv)$] If $\ell=2$, then $u_{\infty,1}-u_{\infty,2}$ is a $G$-invariant sign-changing solution of problem
$$-\Delta u + V(x)u = |u|^{2p-2}u,\qquad u\in H^1(\rn)^G.$$
\end{itemize}
\end{theorem}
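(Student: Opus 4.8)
I would follow the classical scheme for segregation limits of competitive systems (\cite{ctv1,ctv2,clll}, adapted as in \cite{cfs,cp,cpt}), keeping track throughout of the pinwheel symmetries $(S_1^G)$--$(S_2)$ and of the non-constant potential. First the energetics: evaluating the energy functional of \eqref{eq:system} at a fixed point of the pinwheel-symmetric constraint whose $\ell$ components have mutually disjoint supports gives a value independent of $\beta$ (the coupling term vanishes), so $c_{\beta_k}\le C$ uniformly in $k$; since $\bf u_k$ solves \eqref{eq:system}, the Nehari identity reads $c_{\beta_k}=\frac{p-1}{2p}\sum_{j}\irn(|\nabla u_{k,j}|^2+Vu_{k,j}^2)$, so each $u_{k,j}$ is bounded in $H^1(\rn)$, and testing the $j$-th equation with $u_{k,j}$ (using $\beta_k<0$ and $2p<2^\ast$, valid since $p<\frac{N}{N-2}$) gives a uniform lower bound $\|u_{k,j}\|_{H^1}\ge c_0>0$. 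Passing to a subsequence, $u_{k,j}\rightharpoonup u_{\infty,j}$ in $H^1(\rn)$, in $L^{2p}_{\mathrm{loc}}(\rn)$ and a.e.; rewriting the Nehari identity as $\beta_k\sum_{i\ne j}\irn u_{k,i}^pu_{k,j}^p=\frac{2p}{p-1}c_{\beta_k}-\sum_j\irn u_{k,j}^{2p}$ shows its left side is bounded, hence $\irn u_{k,i}^pu_{k,j}^p\to0$ and $u_{\infty,i}u_{\infty,j}=0$ for $i\ne j$ by local strong $L^{2p}$-convergence.

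The crux is the compactness behind part $(i)$: by the concentration-compactness analysis underlying Theorem~\ref{thm:splitting}, applied to the bounded sequence $\bf u_k$, either (a subsequence of) $\bf u_k$ converges strongly in $H^1(\rn)$, or its components look asymptotically like a superposition of $m\ell$ translates of the ground state $\omega$ of \eqref{eq:singleeqVinftyintro} escaping to infinity, in which case $\liminf_k c_{\beta_k}\ge m\ell\mathfrak{c}_\infty$. To exclude the second alternative I need a fixed, $\beta$-independent, disjointly supported, pinwheel-symmetric competitor with energy strictly below $m\ell\mathfrak{c}_\infty$; its construction is precisely where the hypothesis $2\sin\frac{\pi}{m}<\sin\frac{\pi}{2\ell}$ is used, as in the proof of Theorem~\ref{thm:existence}: it yields a separation of scales (there is $\xi$ whose $G$-orbit contains two points closer to each other than any two points lying in the $G$-orbits of two distinct members of $\{\xi,\vr_\ell\xi,\dots,\vr_\ell^{\ell-1}\xi\}$), under which the fine interaction estimates for truncated translates of $\omega$ close and produce such a competitor. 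Hence $c_{\beta_k}<m\ell\mathfrak{c}_\infty$ uniformly, $u_{k,j}\to u_{\infty,j}$ strongly in $H^1(\rn)$, and then $u_{\infty,j}\ne0$ by the lower bound, $u_{\infty,j}\ge0$ by the a.e.\ limit, $(S_1^G)$ and $(S_2)$ persist since $G$ and $\vr_\ell$ act continuously on $H^1(\rn)$, and $\beta_k\sum_{i\ne j}\irn u_{k,i}^pu_{k,j}^p\to0$ follows from the strong convergence together with the standard testing arguments for segregation limits (which also give $-\Delta u_{\infty,j}+Vu_{\infty,j}\le u_{\infty,j}^{2p-1}$ on $\rn$, from the sign of the coupling). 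I expect this energy estimate to be the main obstacle; the rest is routine or can be imported from the literature.

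For parts $(ii)$--$(iii)$: passing to the limit in the $j$-th equation away from the supports of the other components shows that $u_{\infty,j}$ restricted to $\Omega_j=\{u_{\infty,j}>0\}$ solves $-\Delta u+Vu=|u|^{2p-2}u$ in $H^1_0(\o_j)^G$, and elliptic regularity gives $u_{\infty,j}\in\cC^0(\rn)$. Optimality of $(\Omega_1,\dots,\Omega_\ell)$ as a $(G,\vr_\ell)$-pinwheel partition is a two-sided comparison: every pinwheel partition furnishes, via its ground states on the pieces, a $\beta$-independent admissible competitor for \eqref{eq:system} with zero coupling, so $\liminf_k c_{\beta_k}$ is at most the optimal pinwheel partition value, while $\bf u_\infty$ realizes a pinwheel partition whose value equals $\lim_k c_{\beta_k}$ by strong convergence; equality forces that partition to be optimal. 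Since $u_{\infty,i}u_{\infty,j}=0$ and each $u_{\infty,j}$ solves the equation on its positivity set, $(u_{\infty,1},\dots,u_{\infty,\ell})$ is a segregated solution of the type to which the free-boundary regularity theory of \cite{ctv1,ctv2,clll} (and its refinements) applies, giving $\rn\smallsetminus\bigcup_j\Omega_j=\mathscr R\cup\mathscr S$ with $\mathscr R\cap\mathscr S=\emptyset$, $\mathscr R$ a $\cC^{1,\alpha}$-hypersurface across which exactly two components meet with matching nonzero normal derivatives, and $\mathscr S$ closed of Hausdorff dimension at most $N-2$ where all gradients vanish; $G$-invariance of $\bf u_\infty$ passes to $\mathscr R$ and $\mathscr S$.

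Finally, for part $(iv)$ with $\ell=2$, put $w:=u_{\infty,1}-u_{\infty,2}$: on $\Omega_1$ one has $w=u_{\infty,1}>0$ solving the equation, on $\Omega_2$ one has $w=-u_{\infty,2}<0$ with $-u_{\infty,2}$ solving the same equation by oddness of $t\mapsto|t|^{2p-2}t$, and across $\mathscr R$ the matching-gradient condition of part $(iii)$ (only the pair $i=1$, $j=2$ being available) forces the two one-sided normal derivatives of $w$ to agree, so $w$ is $\cC^1$ across $\mathscr R$; the set $\mathscr S$, of Hausdorff dimension at most $N-2$, is removable for the bounded solution $w$, so $w$ is a $G$-invariant weak, hence classical, solution of $-\Delta u+Vu=|u|^{2p-2}u$ on $\rn$, and it changes sign because $u_{\infty,1}$ and $u_{\infty,2}$ are nontrivial with disjoint supports.
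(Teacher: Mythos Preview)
Your outline for $(i)$--$(iii)$ matches the paper's: the $\beta$-independent bound $c_{V,k}^G\le\widehat c_V^G$ from disjointly supported pinwheel competitors, the strict inequality $\widehat c_V^G<\ell m\mathfrak c_\infty$ via truncated translates of $\omega$ (this is Lemma~\ref{lem:upperbound}, where the hypothesis $2\sin\tfrac{\pi}{m}<\sin\tfrac{\pi}{2\ell}$ enters), and concentration-compactness to exclude escape. One caution: the dichotomy of Theorem~\ref{thm:splitting} is proved for a \emph{fixed} $\beta$, and its case~$(I)$ uses that the limit sits on the Nehari manifold at the minimizing level. With $\beta_k\to-\infty$ the sequence $(\bf u_k)$ is not a Palais--Smale sequence for any single functional, so excluding escape does not by itself give strong convergence. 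The paper closes this as follows: testing the first equation with $u_{\infty,1}\ge0$ and dropping the nonnegative term $-\beta_k\sum_{j\ne1}\irn u_{k,j}^p u_{k,1}^{p-1}u_{\infty,1}$ yields only the inequality $\|u_{\infty,1}\|_V^2\le|u_{\infty,1}|_{2p}^{2p}$; combined with $u_{\infty,i}u_{\infty,j}=0$ this places $s\bf u_\infty\in\mathcal W_V^G$ for some $s\in(0,1]$, and the chain
\[
\widehat c_V^G\;\le\;\tfrac{p-1}{2p}\sum_j\|su_{\infty,j}\|_V^2\;\le\;\tfrac{p-1}{2p}\sum_j\|u_{\infty,j}\|_V^2\;\le\;\liminf_k c_{V,k}^G\;\le\;\widehat c_V^G
\]
forces $s=1$ and $\|u_{k,j}\|_V\to\|u_{\infty,j}\|_V$, hence strong convergence. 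You record the subsolution inequality but do not assemble this loop; it is exactly what upgrades weak to strong convergence and simultaneously identifies $\bf u_\infty$ as a minimizer for $\widehat c_V^G$, which is then the input for the optimality argument in $(ii)$.

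Your argument for $(iv)$ is a genuinely different route. You glue $u_{\infty,1}$ and $-u_{\infty,2}$ across $\mathscr R$ via the reflection law of $(iii)$ and then remove $\mathscr S$ by capacity; this is valid but makes $(iv)$ depend on the full free-boundary regularity package. The paper instead observes that for $\ell=2$ one has $\vr_2=\tau$, so $(u_1,u_2)\mapsto u_1-u_2$ is an energy-preserving bijection from $\mathcal W_V^G$ onto the Nehari set of $G$-invariant functions satisfying $u(\tau x)=-u(x)$; since $\bf u_\infty$ minimizes over $\mathcal W_V^G$ by part $(i)$, the difference $u_{\infty,1}-u_{\infty,2}$ minimizes in that antisymmetric class and is therefore, by the principle of symmetric criticality, a solution. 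This purely variational shortcut bypasses $(iii)$ entirely.
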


\begin{theorem}\label{theorembetainfinity2}
Let $N\geq 4$, $G:=\z/m\z$ with $m$ even, $2\sin\frac{\pi}{m}<\sin\frac{\pi}{2\ell}$, and $V=V_\infty$. Let $\beta_k \to -\infty $ and let  $\bf v_k = (v_{k,1},\ldots,v_{k,\ell})$ be a least energy $G$-pinwheel solution to \eqref{eq:system} with $\beta =\beta_k$ such that $v_{k,j}\geq 0$ for all $j=1,\ldots\ell$. Then, after passing to a subsequence,  there exists a sequence $(y_k)$ in $\r^{N-4}$ such that, if we set
	\begin{align*}
		\bf u_k:=(u_{k,1},\ldots,u_{k,\ell})\qquad\text{with \ }u_{k,j}(z,y):=v_{k,j}(z,y-y_k)\text{ \ for all \ }(z,y)\in\cc^2\times\r^{N-4},
	\end{align*}
then the functions $u_{k,j}$ satisfy the statements $(i)-(iv)$ of \emph{Theorem \ref{theorembetainfinity1}}.
\end{theorem}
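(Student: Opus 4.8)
The plan is to follow the scheme developed for Theorem \ref{theorembetainfinity1}, the only genuinely new feature being the invariance of the problem under translations in the variable $y\in\r^{N-4}$. Since $V=V_\infty$ is constant and both the $G=\zm$-action and the isometry $\vr_\ell$ fix the $\r^{N-4}$-coordinate, the map $\bf v\mapsto\bf v(\,\cdot\,,\,\cdot-y_0)$ preserves $(S_1^G)$ and $(S_2)$ and leaves the energy functional of \eqref{eq:system} invariant, whereas a $G$-pinwheel configuration cannot be translated in the $\cc^2$-directions without violating $G$-invariance. Thus a minimizing configuration is determined only up to a translation in $y$, and the sequence $(y_k)$ in the statement is there to remove this degree of freedom. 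Once strong convergence of the recentered sequence $\bf u_k$ has been obtained, assertions $(i)$--$(iv)$ follow as in the proof of Theorem \ref{theorembetainfinity1}, whose verification of those four points uses only that the potential is continuous and bounded between positive constants, which holds automatically here.

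The preliminary steps are routine. By Theorem \ref{thm:existence}$(ii)$ the least energy $G$-pinwheel solutions satisfy $\frac{p-1}{2p}\irn(|\nabla v_{k,j}|^2+V_\infty v_{k,j}^2)<m\mathfrak{c}_\infty$ for all $k$ and $j$, so $(\bf v_k)$ is bounded in $H^1(\rn)^\ell$; summing these $\ell$ inequalities and using the energy identity $c_{\beta_k}=\frac{p-1}{2p}\sum_{j}\irn(|\nabla v_{k,j}|^2+V_\infty v_{k,j}^2)$ valid for any solution, the least energy $G$-pinwheel level $c_{\beta_k}$ of the system with $\beta=\beta_k$ satisfies $c_{\beta_k}<m\ell\,\mathfrak{c}_\infty$, uniformly in $k$. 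The natural (Nehari-type) identities also yield $\sum_{j}\irn|v_{k,j}|^{2p}\geq c_0>0$, so, by Lions' vanishing lemma, there are $\delta>0$ and points $\zeta_k=(\eta_k,\tilde y_k)\in\cc^2\times\r^{N-4}$ with $\int_{B_1(\zeta_k)}\sum_{j}|v_{k,j}|^{2p}\geq\delta$. Taking $y_k:=-\tilde y_k$ and defining $\bf u_k$ as in the statement, $\bf u_k$ is again a least energy $G$-pinwheel solution with $\beta=\beta_k$ satisfying $\int_{B_1((\eta_k,0))}\sum_{j}|u_{k,j}|^{2p}\geq\delta$.

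The hard part is the compactness of $(\bf u_k)$, and its quantitative core is a uniform strict energy bound: there is $\eps_0>0$ independent of $k$ with $c_{\beta_k}\leq m\ell\,\mathfrak{c}_\infty-\eps_0$ for all large $k$. This is obtained by testing \eqref{eq:system} with $\beta=\beta_k$ against functions adapted to a fixed $(G,\vr_\ell)$-pinwheel partition of $\rn$ whose $\ell$ pieces each contain an $m$-point $G$-orbit of mutually close bumps; it is exactly here that the hypothesis $2\sin\frac{\pi}{m}<\sin\frac{\pi}{2\ell}$ enters, for it guarantees that the $G$-orbit of a suitable point $\xi$ contains two points at a distance strictly smaller than the distance between any two of the $G$-orbits of $\xi,\vr_\ell\xi,\dots,\vr_\ell^{\ell-1}\xi$, so that the closest pair of bumps in the test configuration lies in a single component, and its attractive self-interaction pushes the energy strictly below $m\ell\,\mathfrak{c}_\infty$ (the cross-component overlap contributes a term of size $\beta_k\cdot o(1)$, negligible for $k$ large); this is the same mechanism behind the test-function estimate for Theorem \ref{thm:existence}$(ii)$. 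Granting this bound, I would run the concentration--compactness and Brezis--Lieb arguments, that is, the $\beta_k\to-\infty$ analogue of Theorem \ref{thm:splitting}: if $|\eta_k|\to\infty$ along a subsequence, then by $G$-invariance each component of $\bf u_k$ carries $m$ bumps on the $G$-orbit of $(\eta_k,0)$, with pairwise distances $\to\infty$, and these $m\ell$ bumps are asymptotically decoupled (from one another, and across components by segregation), which forces $\liminf_k c_{\beta_k}\geq m\ell\,\mathfrak{c}_\infty$ and contradicts the bound. Hence $(\eta_k)$ is bounded; along a further subsequence $\bf u_k\rightharpoonup\bf u_\infty$, and $\bf u_\infty\neq 0$ by the concentration inequality of the previous paragraph; the same splitting analysis, using the uniform gap once more, rules out loss of mass at infinity and gives $u_{k,j}\to u_{\infty,j}$ strongly in $H^1(\rn)$ for every $j$.

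With strong convergence in hand, the remaining assertions go exactly as in Theorem \ref{theorembetainfinity1}. Passing to the limit in the system and in the Nehari identities yields $u_{\infty,j}\geq 0$, $u_{\infty,j}\neq 0$, $(S_1^G)$, $(S_2)$, the segregation $u_{\infty,i}u_{\infty,j}=0$ and $\int_{\rn}\beta_ku_{k,i}^pu_{k,j}^p\to 0$ for $i\neq j$; local elliptic regularity gives $u_{\infty,j}\in\cC^0(\rn)$ and that its restriction to $\o_j=\{u_{\infty,j}>0\}$ solves the scalar equation in $H^1_0(\o_j)^G$; comparing the energy of $\bf u_\infty$ with that of an arbitrary admissible $(G,\vr_\ell)$-pinwheel partition, together with the identification of $\lim_k c_{\beta_k}$ as the optimal partition level, shows that $(\o_1,\dots,\o_\ell)$ is optimal; the structure of $\rn\smallsetminus\bigcup_{j}\o_j$ in $(iii)$ comes from the regularity theory for optimal partitions of strongly competing systems; and when $\ell=2$ the $\cC^1$-matching of $u_{\infty,1}$ and $-u_{\infty,2}$ across their common nodal set makes $u_{\infty,1}-u_{\infty,2}$ a $G$-invariant sign-changing solution of the scalar equation. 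I expect the main obstacle to be the compactness step of the third paragraph — simultaneously removing the $y$-translation invariance and excluding escape of mass in the $\cc^2$-directions — which rests entirely on the uniform strict inequality $c_{\beta_k}\leq m\ell\,\mathfrak{c}_\infty-\eps_0$.
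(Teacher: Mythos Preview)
Your proposal is correct and follows the same route as the paper: recenter in the $y$-variable (exactly as in the proofs of Theorems~\ref{thm:splitting2} and~\ref{thm:betatozero2}), then run the argument for Theorem~\ref{theorembetainfinity1} verbatim. One clarification is worth making. The paper obtains the uniform strict bound $c_{V_\infty,k}^G\leq \widehat{c}_{V_\infty}^G<\ell m\,\mathfrak{c}_\infty$ via Lemma~\ref{lem:upperbound}, which builds a test configuration whose $\ell$ components have \emph{genuinely disjoint} supports by means of cut-offs; this places the test element in $\mathcal{W}_{V_\infty}^G\subset\mathcal{N}_{V_\infty,k}^G$ for every $\beta_k<0$, and the cross-component interaction term is then identically zero --- not merely ``$\beta_k\cdot o(1)$''. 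If your test components overlapped, that interaction term would be $\beta_k$ times a fixed positive number and would diverge to $-\infty$ as $\beta_k\to-\infty$, destroying the uniform upper bound; so the disjoint-support construction (and hence the stronger hypothesis $2\sin\frac{\pi}{m}<\sin\frac{\pi}{2\ell}$ that makes it possible) is essential, not just a convenience.
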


To obtain Theorems \ref{theorembetainfinity1} and \ref{theorembetainfinity2} we need an upper bound for the energy of the pinwheel solutions that is uniform in $\beta$. This requires having even better control of the interactions between the components of the test function. Here is where the slightly stronger assumption that $2\sin\frac{\pi}{m}<\sin\frac{\pi}{2\ell}$ comes into play, see Lemma \ref{lem:upperbound}.

The sign-changing solution to the Schrödinger equation given by Theorem \ref{theorembetainfinity1}$(iv)$ was already obtained in \cite{cd}, where more general symmetries are also considered.

The paper is structured as follows. Section \ref{sec:variational_setting} presents the symmetric variational framework. In Section \ref{sec:behaviorofmini} we describe the behavior of minimizing pinwheel sequences for the system \eqref{eq:system}. The proof of Theorem \ref{thm:existence} is given in Section \ref{sec:existence}. Section \ref{sec:limitprofileweak}  is devoted to the proofs of Theorems \ref{thm:betatozero1} and \ref{thm:betatozero2}, and Section \ref{sec:limitprofilestrong} to those of Theorems \ref{theorembetainfinity1} and \ref{theorembetainfinity2}.

\section{The variational setting}
\label{sec:variational_setting}

Assume that $V$ satisfies $(V_1),(V_2)$. Then,
$$\langle u,v\rangle_V:=\int_{\rn}\left(\nabla u\cdot\nabla v+V(x)uv\right)\qquad\text{and}\qquad\|u\|_V:=\sqrt{\langle u,u\rangle}$$
are an inner product and a norm in $H^1(\rn)$, equivalent to the standard one. We write $|u|_{2p}$ for the norm of $u$ in $L^{2p}(\rn)$. The solutions $\bf u=(u_1,\ldots,u_\ell)$ to \eqref{eq:system} are the critical points of the functional $\mathcal{J}_V:(H^1(\rn))^\ell\to\r$ given by 
\[\mathcal{J}_V(\bf u):= \frac{1}{2}\sum_{i=1}^\ell\|u_i\|_V^2 - \frac{1}{2p}\sum_{i=1}^\ell |u_i|_{2p}^{2p}-\frac{\beta}{2p}\sum_{\substack{i,j=1 \\ i\neq j}}^\ell\irn |u_i|^p|u_j|^p,\]
which is of class $\cC^1$. Its $i\text{-th}$ partial derivative is
$$\partial_i\mathcal{J}_V(\bf u)v=\langle u_i,v\rangle_V-\irn|u_i|^{2p-2}u_iv-\beta\sum\limits_{\substack{j=1\\j\neq i}}^\ell\irn|u_j|^p|u_i|^{p-2}u_iv$$
for any $\bf u\in(H^1(\rn))^\ell, \ v\in H^1(\rn)$. Next, we describe the symmetric variational setting to obtain $G$-pinwheel solutions.

\subsection{The action on each component}

Let $m\in\n$ be \emph{an even number} (the need for this assumption will become clear in the proof of Proposition \ref{prop:vr_ell}) and consider the additive group $\zm:=\{0,\ldots,m-1\}$ of integers modulo $m$. For each $j\in\zm$ define $\vartheta_m^j:\cc^2\to\cc^2$ as
\begin{equation}\label{eq:zm_action}
\vartheta_m^jz:=(\e^{2\pi\mathrm{i}j/m}z_1,\e^{-2\pi\mathrm{i}j/m}z_2)\quad\text{for every \ }z=(z_1,z_2)\in\cc\times\cc.	
\end{equation}
This gives an action of $\z/m\z$ on $\rn$ defined by
$$jx:=(\vartheta_m^jz,y)\quad\text{for every \ }j\in \z/m\z\text{ \ and \ }x=(z,y)\in\cc^2\times\r^{N-4}\equiv\rn.$$	
We consider also the group $G_m:=\z/m\z\times O(N-4)$ acting on $\rn$ by
$$gx:=(\vartheta_m^jz,\alpha y)\quad\text{for every \ }g=(j,\alpha)\in \z/m\z\times O(N-4)\text{ \ and \ }x=(z,y)\in\cc^2\times\r^{N-4}\equiv\rn,$$
where, as usual, $O(M)$ denotes the group of linear isometries of $\r^M$.

Hereafter \emph{$G$ will denote either one of these two groups}. If $g\in G$ and $u\in H^1(\rn)$ we define $gu(x):=u(g^{-1}x)$. Since $V$ is radial, $\|gu\|_V=\|u\|_V$. So setting
$$g\bf u:=(gu_1,\ldots,gu_\ell)\quad\text{for \ } \bf u=(u_1,\ldots,u_\ell)\in (H^1(\rn))^\ell\text{ \ and \ }g\in G,$$
we obtain an isometric action of $G$ on $(H^1(\rn))^\ell$. The $G$-fixed point space of $(H^1(\rn))^\ell$ is the subspace,
\begin{align*}
\mathcal{H}^G:&=\{\bf u\in (H^1(\rn))^\ell:g\bf u=\bf u \text{ for all }g\in G\}\\
&=\{\bf u\in (H^1(\rn))^\ell:u_i\text{ is }G\text{-invariant for every }i=1,\ldots,\ell\}.
\end{align*}
Note that the functional $\mathcal{J}_V$ is $G$-invariant. So, by the principle of symmetric criticality \cite[Theorem 1.28]{W}, the critical points of the restriction of $\mathcal{J}_V$ to $\mathcal{H}^G$ are the solutions to the system \eqref{eq:system} whose components are $G$-invariant.

\subsection{The action on the set of components}

Let $\tau:\cc^2\to\cc^2$ be given by $\tau(z_1,z_2):=(-\overline{z}_2,\overline{z}_1)$ and, for each $n\in\zl=\{0,\ldots,\ell-1\}$, define $\vr_\ell^n:\rn\to\rn$ by
\begin{equation}\label{eq:eq:vrj}
\vr_\ell^nx:=\left(\Big(\cos\frac{\pi n}{\ell}\Big)z+\Big(\sin\frac{\pi n}{\ell}\Big)\tau z,\,y\right)\qquad\text{for every \ }x=(z,y)\in\cc^2\times\r^{N-4}.
\end{equation}
Since $\tau z$ is orthogonal to $z$ and $|\tau z|=|z|$ we have that $\vr_\ell^n\in O(N)$. Furthermore, as $\tau\vartheta_m^j=\vartheta_m^j\tau$ for every $j\in\zm$, we have that $\vr_\ell^ng=g\vr_\ell^n$ for every $g\in G$ and $n\in\zl$.

We denote by $\sigma^n:\{1,\ldots,\ell\}\to\{1,\ldots,\ell\}$ the permutation $\sigma^n(r):=r+n \mod\ell$, \ $n\in\zl$.

\begin{proposition} \label{prop:vr_ell}
\begin{itemize}
\item[$(a)$] For each $n\in\zl$ the function $\vr_\ell^n:\mathcal{H}^G\to\mathcal{H}^G$ given by
$$\vr_\ell^n\bf u(x):=(u_{\sigma^n(1)}(\vr_\ell^{-n}x),\ldots,u_{\sigma^n(\ell)}(\vr_\ell^{-n}x)),\quad\text{where \ }\bf u=(u_1,\ldots,u_\ell),$$
is well-defined and is a linear isometry.
\item[$(b)$] $n\mapsto\vr_\ell^n$ is a well-defined action of $\zl$ on $\mathcal{H}^G$.
\end{itemize}
\end{proposition}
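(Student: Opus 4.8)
The plan is to handle $(a)$ and $(b)$ separately; essentially everything is routine except for one sign ambiguity in the composition law, which is exactly where the evenness of $m$ is used.

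For $(a)$, I would rely on the two facts recorded just before the statement: $\vr_\ell^n\in O(N)$, and $\vr_\ell^n g=g\vr_\ell^n$ for every $g\in G$. From the latter (applied to $g^{-1}$ as well), if $u\in H^1(\rn)$ is $G$-invariant then so is $x\mapsto u(\vr_\ell^{-n}x)$, since $u(\vr_\ell^{-n}g^{-1}x)=u(g^{-1}\vr_\ell^{-n}x)=u(\vr_\ell^{-n}x)$. Hence, for $\bf u\in\mathcal{H}^G$, every component of $\vr_\ell^n\bf u$ lies in $H^1(\rn)$ and is $G$-invariant, so $\vr_\ell^n$ maps $\mathcal{H}^G$ into $\mathcal{H}^G$; this gives well-definedness. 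Linearity is immediate from the defining formula. For the isometry property I would use that, $V$ being radial, precomposition with any element of $O(N)$ preserves $\|\cdot\|_V$, so that, since $\sigma^n$ is a permutation of $\{1,\dots,\ell\}$,
\[\|\vr_\ell^n\bf u\|^2=\sum_{r=1}^\ell\|u_{\sigma^n(r)}\circ\vr_\ell^{-n}\|_V^2=\sum_{r=1}^\ell\|u_{\sigma^n(r)}\|_V^2=\sum_{r=1}^\ell\|u_r\|_V^2=\|\bf u\|^2.\]

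For $(b)$, I would check the two axioms of a group action of $\zl$ on $\mathcal{H}^G$. First, with $n=0$, formula \eqref{eq:eq:vrj} gives $\vr_\ell^0=\mathrm{id}_{\rn}$ and $\sigma^0=\mathrm{id}$, so $\vr_\ell^0\bf u=\bf u$. The substance is the composition law $\vr_\ell^{n'}\circ\vr_\ell^n=\vr_\ell^{(n+n')\bmod\ell}$ on $\mathcal{H}^G$. I would expand the $r$-th component of $\vr_\ell^{n'}(\vr_\ell^n\bf u)$ at a point $x$: it equals $u_{\sigma^n(\sigma^{n'}(r))}\!\left(\vr_\ell^{-n}\vr_\ell^{-n'}x\right)$. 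On the index side, $\sigma^n\circ\sigma^{n'}=\sigma^{n+n'}=\sigma^{(n+n')\bmod\ell}$, which is the correct index. On the spatial side I would isolate the one-parameter family $R_\theta:=(\cos\theta)\,\mathrm{id}_{\cc^2}+(\sin\theta)\,\tau$, so that $\vr_\ell^n$ acts on $(z,y)$ as $(R_{\pi n/\ell}z,y)$ and $\vr_\ell^{-n}=(\vr_\ell^n)^{-1}$ as $(R_{-\pi n/\ell}z,y)$; from $\tau^2=-\mathrm{id}_{\cc^2}$ one gets $R_\theta R_{\theta'}=R_{\theta+\theta'}$, $R_0=\mathrm{id}$, $R_\pi=-\mathrm{id}$. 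Thus $\vr_\ell^{-n}\vr_\ell^{-n'}$ acts on the $\cc^2$-coordinate as $R_{-\pi(n+n')/\ell}$, which equals $R_{-\pi k/\ell}$ if $n+n'=k:=(n+n')\bmod\ell$ and equals $-R_{-\pi k/\ell}$ if $n+n'=k+\ell$.

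This discrepancy by $-\mathrm{id}$ on $\cc^2$ is precisely where $m$ even enters: any $G$-invariant function $f$ satisfies $f(-z,y)=f(z,y)$, because for $m$ even the element $m/2\in\z/m\z$ acts on $\rn$ as $(z,y)\mapsto(\vartheta_m^{m/2}z,y)=(-z,y)$, since $\vartheta_m^{m/2}z=(\e^{\pi\i}z_1,\e^{-\pi\i}z_2)=-z$. Applying this with $f=u_{\sigma^{(n+n')}(r)}$, which is $G$-invariant because $\bf u\in\mathcal{H}^G$, the $-\mathrm{id}$ in the wrap-around case disappears, so in all cases the $r$-th component of $\vr_\ell^{n'}(\vr_\ell^n\bf u)$ at $x$ equals $u_{\sigma^k(r)}(\vr_\ell^{-k}x)$, i.e. the $r$-th component of $\vr_\ell^k\bf u$. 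Combined with $\vr_\ell^0=\mathrm{id}$ and part $(a)$, this shows that $n\mapsto\vr_\ell^n$ is a well-defined action of $\zl$ on $\mathcal{H}^G$ by linear isometries. I expect the only genuine obstacle to be organizing this sign bookkeeping cleanly: as maps of $\rn$, $\vr_\ell^n\circ\vr_\ell^{n'}$ need \emph{not} coincide with $\vr_\ell^{(n+n')\bmod\ell}$ — they may differ by $z\mapsto -z$ — so the composition law is genuinely an identity on $\mathcal{H}^G$, and evenness of $m$ is exactly what guarantees it holds there.
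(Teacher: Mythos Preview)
Your proof is correct and rests on the same key observation as the paper's: the commutation $\vr_\ell^n g=g\vr_\ell^n$ for part $(a)$, and for part $(b)$ the fact that $m$ even makes $\vartheta_m^{m/2}z=-z$, so $G$-invariant functions absorb the sign $z\mapsto -z$. The only difference is organizational: the paper verifies $(b)$ by checking directly that $\vr_\ell^\ell=\mathrm{id}$ on $\mathcal{H}^G$ (tacitly using that the integer-indexed family $n\mapsto\vr_\ell^n$ already satisfies $\vr_\ell^{n'}\circ\vr_\ell^{n}=\vr_\ell^{n+n'}$ as operators, so the $\z$-homomorphism descends to $\zl$), whereas you verify the $\zl$-composition law head-on and isolate the wrap-around case explicitly. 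Your route is slightly longer but makes the role of the sign discrepancy more transparent; the paper's is terser but asks the reader to supply the $\z$-homomorphism step.
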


\begin{proof}
$(a):$ \ Let $\bf u\in\mathcal{H}^G$, $g\in G$ and $n\in\zl$. Since each $u_i$ is $G$-invariant and $\vr_\ell^ng=g\vr_\ell^n$ we have
\begin{align*}
g[\vr_\ell^n\bf u](x)&=[\vr_\ell^n\bf u](g^{-1}x)\\
&=(u_{\sigma^n(1)}(\vr_\ell^{-n}g^{-1}x),\ldots,u_{\sigma^n(\ell)}(\vr_\ell^{-n}g^{-1}x))\\
&=(u_{\sigma^n(1)}(g^{-1}\vr_\ell^{-n}x),\ldots,u_{\sigma^n(\ell)}(g^{-1}\vr_\ell^{-n}x))\\
&=(u_{\sigma^n(1)}(\vr_\ell^{-n}x),\ldots,u_{\sigma^n(\ell)}(\vr_\ell^{-n}x))=\vr_\ell^n\bf u(x). 
\end{align*}
This shows that $\vr_\ell^n\bf u\in\mathcal{H}^G$. The function $\vr_\ell^n:\mathcal{H}^G\to\mathcal{H}^G$ is clearly linear and bijective and, since $V$ is radial, it satisfies $\|\vr_\ell^n\bf u\|_V=\|\bf u\|_V$.

$(b):$ \ $\vr_\ell^{-\ell}(z,y)=(-z,y)$ for every $(z,y)\in\cc^2\times\r^{N-4}$. So, if $\bf u\in\mathcal{H}^G$ then, since $m$ is even,
\begin{align*}
\vr_\ell^\ell\bf u(z,y)&=(u_{\sigma^\ell(1)}(\vr_\ell^{-\ell}( z,y)),\ldots,u_{\sigma^\ell(\ell)}(\vr_\ell^{-\ell}(z,y)))\\
&=(u_1(-z,y),\ldots,u_\ell(-z,y))=\bf u(-z,y)=\bf u(\vartheta_m^\frac{m}{2}z,y)=\bf u(z,y).
\end{align*}
This shows that $\vr_\ell^{\ell}:\mathcal{H}^G\to\mathcal{H}^G$ is the identity. So $j\mapsto\vr_\ell^j$ is a well-defined homomorphism from $\zl$ into the group of linear isometries of $\mathcal{H}^G$.
\end{proof}

The $\zl$-fixed point space of $\mathcal{H}^G$ is the space
\begin{align*}
\mathscr{H}^G:&=(\mathcal{H}^G)^{\zl}=\{\bf u\in\mathcal{H}^G:\vr_\ell^n\bf u=\bf u\text{ \ for all \ }n\in\zl \} \\
&=\{\bf u\in (H^1(\rn))^\ell:u_i\text{ is }G\text{-invariant, }u_{i+1}=u_i\circ\vr_\ell\text{ for }i=1,\ldots,\ell-1\text{ and }u_1=u_\ell\circ\vr_\ell\}.
\end{align*}
The functional $\mathcal{J}_V|_{\mathcal{H}^G}:\mathcal{H}^G\to\r$ is $\zl$-invariant. So, by the principle of symmetric criticality, the critical points of its restriction to $\mathscr{H}^G$ are precisely the solutions of \eqref{eq:system} that satisfy the conditions $(S_1^G)$ and $(S_2)$ stated in the introduction, called $G$-pinwheel solutions. Abusing notation, we write
\begin{equation}\label{eq:energy}
\mathcal{J}_V:=\mathcal{J}_V|_{\mathscr{H}^G}:\mathscr{H}^G\to\r.
\end{equation}
Note that
\begin{align*}
\mathcal{J}_V'(\bf u)\bf v=\sum_{i=1}^\ell\partial_i\mathcal{J}_V(\bf u)v_i=\ell\,\partial_j\mathcal{J}_V(\bf u)v_j\quad\text{for any \ }\bf u,\bf v\in\mathscr{H}^G\text{ and }j=1,\ldots,\ell.
\end{align*}
If $\bf u\in\mathscr{H}^G$ and $\bf u\neq 0$ then, by condition $(S_2)$, every component of $\bf u$ is nontrivial. So the fully nontrivial critical points of $\mathcal{J}_V|_{\mathscr{H}^G}$ belong to the Nehari manifold
\begin{align}\label{eq:nehari}
\mathcal{N}_V^G=\{\bf u\in\mathscr{H}^G:\bf u\neq 0, \ \mathcal{J}_V'(\bf u)\bf u=0\}.
\end{align}
Set
\begin{align}\label{cv:eq}
c_V^G:=\inf_{\bf u\in\mathcal{N}_V^G}\mathcal{J}_V(\bf u).    
\end{align}
A function $\bf u\in\mathcal{N}_V^G$ such that $\mathcal{J}_V(\bf u)=c_V^G$ is called a \emph{least energy $G$-pinwheel solution} to \eqref{eq:system}.

We consider also the single equation
\begin{equation} \label{eq:single}
-\Delta u + V(x)u = |u|^{2p-2}u,\qquad u\in H^1(\rn)^G,
\end{equation}
where $H^1(\rn)^G = \{u\in H^1(\rn) : u \mbox{ is $G$- invariant} \}$ and we denote by $J:H^1(\rn)^G\to\r$ the energy functional and by and $\cM^G$ the Nehari manifold associated to it, i.e.,
\begin{equation}\label{functionalJsingle}
J(u):=\frac{1}{2}\|u\|_V^2 - \frac{1}{2p}\irn |u|^{2p}
\end{equation}
and
$$\cM^G:=\left\{u\in H^1(\rn)^G:u\neq 0, \ \|u\|_V^2 =\irn |u|^{2p}\right\}.$$
Similarly, we write $J_\infty:H^1(\rn)\to\r$ and $\cM_\infty$ for the energy functional and the Nehari manifold of
\begin{equation} \label{eq:limit_problem}
-\Delta u + V_\infty u = |u|^{2p-2}u,\qquad u\in H^1(\rn),
\end{equation}
and set 
\begin{equation}\label{eq:frak_c}
\mathfrak{c}_\infty:=\inf_{u\in\cM_\infty}J_\infty(u)\qquad\text{and}\qquad\mathfrak{c}^G:=\inf_{u\in\cM^G}J(u).
\end{equation}

\begin{proposition} \label{prop:nehari}
\begin{itemize}
\item[$(a)$] $\mathcal{N}_V^G\neq\emptyset$. 
\item[$(b)$] There exists $a_0>0$, independent of $\beta$, such that $\|\bf u\|_V^2\geq a_0$ for every $\bf u\in\cN_V^G$.
\item[$(c)$] $\mathcal{N}_V^G$ is a closed $\cC^1$-submanifold of codimension $1$ of $\mathscr{H}^G$, and a natural constraint for $\mathcal{J}_V$.
\item[$(d)$] If $\bf u\in\mathscr{H}^G$ is such that, for each $i=1,\ldots,\ell$,
\begin{equation} \label{eq:proj_nehari}
\irn|u_i|^{2p} + \sum_{\substack{j=1 \\ j\neq i}}^\ell\beta\irn|u_i|^p|u_j|^p>0,
\end{equation}
then there exists a unique $s_{\bf u} \in (0,\infty)$ such that \ $s_{\bf u}\bf u\in \mathcal{N}_V^G$. Furthermore,  
\begin{align}\label{eq:functionalJs}
	\mathcal{J}_V(s_{\bf u}\bf u) = \dfrac{p-1}{2p} \left(\frac{\sum\limits_{i=1}^\ell\|u_i\|_V^2}{\Big(\sum\limits_{i=1}^\ell|u_i|_{2p}^{2p}+\sum\limits_{\substack{i,j=1 \\ j\neq i}}^\ell\beta\irn|u_i|^p|u_j|^p\Big)^\frac{1}{p}}\right)^\frac{p}{p-1}.
\end{align}
\item[$(e)$] $c_V^G\leq \ell m\mathfrak{c_\infty}$.
\end{itemize}
\end{proposition}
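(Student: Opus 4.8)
\noindent\emph{Proof strategy.}
The plan is to establish (d) first, since it provides the fibering map on which the remaining parts depend, and then to read off (a), (b), (c) by routine arguments and (e) by a test‑function construction, which is where the real content lies. For (d), write $A:=\sum_{i=1}^\ell\|u_i\|_V^2$ and $B:=\sum_{i=1}^\ell|u_i|_{2p}^{2p}+\beta\sum_{i\neq j}\irn|u_i|^p|u_j|^p$ for $\bf u=(u_1,\dots,u_\ell)\in\mathscr{H}^G$. Since $V$ is radial and $\vr_\ell$ is an isometry commuting with $G$, a change of variables gives $\|u_i\|_V=\|u_1\|_V$, $|u_i|_{2p}=|u_1|_{2p}$ and $\sum_{j\neq i}\irn|u_i|^p|u_j|^p$ independent of $i$; hence the $\ell$ conditions in \eqref{eq:proj_nehari} are all equivalent, and equivalent to $B>0$. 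Granting this, $\bf u\neq0$, so $A>0$, and $\phi(s):=\mathcal{J}_V(s\bf u)=\tfrac{s^2}{2}A-\tfrac{s^{2p}}{2p}B$ has $\phi'(s)=s(A-s^{2p-2}B)$ with the unique positive root $s_{\bf u}=(A/B)^{1/(2p-2)}$; as $\mathcal{J}_V'(s\bf u)(s\bf u)=s\phi'(s)$, this $s_{\bf u}$ is the only positive scalar with $s_{\bf u}\bf u\in\cN_V^G$, and substituting $s_{\bf u}^{2p-2}B=A$ into $\phi(s_{\bf u})$ yields \eqref{eq:functionalJs}.

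Part (a) follows by applying (d) to a $\bf u\in\mathscr{H}^G$ whose components have pairwise disjoint supports: take $\phi\in\cC^\infty_c(\rn)$ nonnegative, nontrivial, radial in $y$ and supported in a ball about $(z_0,0)$ with $|z_0|$ large enough that the sets $\vr_\ell^n\vartheta_m^j(\supp\phi)$ are disjoint, and set $u_1:=\sum_{j\in\zm}\phi(\vartheta_m^{-j}\,\cdot\,)$, $u_i:=u_1\circ\vr_\ell^{i-1}$; then \eqref{eq:proj_nehari} reduces to $\irn|u_i|^{2p}>0$. For (b), on $\cN_V^G$ one has, using $\beta<0$ and the Sobolev embedding $H^1(\rn)\hookrightarrow L^{2p}(\rn)$ (valid since $1<p<\tfrac{N}{N-2}$), $\ell\|u_1\|_V^2=A=B\leq\sum_i|u_i|_{2p}^{2p}=\ell|u_1|_{2p}^{2p}\leq\ell C\|u_1\|_V^{2p}$ with $C$ independent of $\beta$, whence $\|\bf u\|_V^2=\ell\|u_1\|_V^2\geq\ell C^{-1/(p-1)}=:a_0>0$. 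For (c), set $\Psi(\bf u):=\mathcal{J}_V'(\bf u)\bf u=A-B$, so $\cN_V^G=\{\bf u\neq0:\Psi(\bf u)=0\}$; for $\bf u\in\cN_V^G$, $\Psi'(\bf u)\bf u=2A-2pB=2(1-p)A\leq2(1-p)a_0<0$ by (b), so $0$ is a regular value and $\cN_V^G$ is a closed $\cC^1$-submanifold of codimension $1$ (closedness from continuity of $\Psi$ and $\|\cdot\|_V^2\geq a_0$ on it); and it is a natural constraint because a Lagrange multiplier $\lambda$ at a constrained critical point $\bf u$ satisfies $0=\mathcal{J}_V'(\bf u)\bf u=\lambda\Psi'(\bf u)\bf u$, forcing $\lambda=0$.

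For (e), I would use the test family dictated by the non‑compact scenario. Fix a unit vector $e\in\cc^2$ with the $m$ points $\vartheta_m^je$ distinct, and for $R>0$ put $\xi^R_j:=(R\vartheta_m^je,0)\in\cc^2\times\r^{N-4}$; these are the $G$-orbit of $\xi^R_0$, since a radial bump centred on the slice $y=0$ is automatically $O(N-4)$-invariant. Set $u_1^R:=\sum_{j\in\zm}\omega(\,\cdot\,-\xi^R_j)$ and $u_i^R:=u_1^R\circ\vr_\ell^{i-1}$, so $\bf u^R\in\mathscr{H}^G$ and $u_i^R=\sum_j\omega(\,\cdot\,-\vr_\ell^{-(i-1)}\xi^R_j)$ is a sum of $\ell m$ copies of $\omega$ whose centres (for generic $e$) are distinct and separate at a linear rate in $R$. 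By the standard decoupling estimates for translated copies of $\omega$, together with $V(x)\to V_\infty$ and the identity $\irn(|\nabla\omega|^2+V_\infty\omega^2)=\irn\omega^{2p}$ coming from $\omega\in\cM_\infty$, the interaction terms vanish as $R\to\infty$ and $A_R\to\ell m\irn\omega^{2p}$, $B_R\to\ell m\irn\omega^{2p}>0$; thus \eqref{eq:proj_nehari} holds for $R$ large, and (d) gives
\[
c_V^G\leq\mathcal{J}_V(s_{\bf u^R}\bf u^R)=\frac{p-1}{2p}\Bigl(\frac{A_R}{B_R^{1/p}}\Bigr)^{\frac{p}{p-1}}\longrightarrow\frac{p-1}{2p}\,\ell m\irn\omega^{2p}=\ell m\,\mathfrak{c}_\infty\qquad(R\to\infty),
\]
using $\mathfrak{c}_\infty=J_\infty(\omega)=\frac{p-1}{2p}\irn\omega^{2p}$; letting $R\to\infty$ proves (e).

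The main obstacle is (e): one must verify that all cross interactions — in particular the competitive $\beta$-terms linking distinct components — genuinely disappear in the limit, which rests on the orbit geometry (the $\ell m$ centres separating linearly in $R$) and on exponential‑decay bounds for $\omega$, and one must place the centres on the slice $y=0$ so that radial bumps stay $G$-invariant without the orbit count exceeding $\ell m$. (The strict inequality $c_V^G<\ell m\,\mathfrak{c}_\infty$ needed later is the truly delicate refinement, requiring sharp interaction estimates and the geometric hypotheses relating $m$ and $\ell$.)
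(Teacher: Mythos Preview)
Your argument is correct and is precisely the standard route the paper has in mind: the authors do not spell out a proof but refer to \cite[Proposition~3.1]{cp}, whose proof proceeds exactly via the fibering map $\phi(s)=\tfrac{s^2}{2}A-\tfrac{s^{2p}}{2p}B$, the Sobolev lower bound on $\cN_V^G$, the regular-level-set/Lagrange-multiplier argument, and, for $(e)$, the test function $\bf u^R$ built from $G$-orbits of translated copies of $\omega$ (the very ansatz \eqref{Ansatz} reused in Proposition~\ref{prop:existence}). Your treatment of the decoupling in $(e)$---noting that the $\ell m$ centres $\vr_\ell^{-(i-1)}\vartheta_m^j(e,0)$ are distinct (cf.\ \eqref{xi1}--\eqref{xi2}) and invoking $(V_2)$ together with the exponential decay of $\omega$---is exactly what is needed for the nonstrict inequality, and your remark that the \emph{strict} inequality is the delicate part handled later (Proposition~\ref{prop:existence}) is on point.
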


\begin{proof}
The proof is straightforward and similar to that of \cite[Proposition 3.1]{cp}.
\end{proof}

\section{The behavior of minimizing sequences}\label{sec:behaviorofmini}

In this section, we describe the behavior of minimizing $G$-pinwheel sequences for \eqref{eq:system}. We assume throughout that $V$ satisfies $(V_1),(V_2)$. We need the following lemmas.

\begin{lemma} \label{lem:Vinfty}
Assume $v_k\rh v$ weakly in $H^1(\rn)$, $\xi_k\in\rn$ satisfies $|\xi_k|\to\infty$ and $V\in\cC^0(\rn)$ satisfies $(V_2)$. Set $V_k(x):=V(x+\xi_k)$. Then,
$$\lim_{k\to\infty}\|v_k\|_{V_k}^2-\lim_{k\to\infty}\|v_k-v\|_{V_k}^2=\|v\|_{V_\infty}^2.$$
\end{lemma}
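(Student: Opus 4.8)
The plan is to expand the squared norms using the Brezis–Lieb type decomposition, separating the gradient part (which does not see the translation) from the potential part (where the translation matters). First I would write, for each $k$,
\[
\|v_k\|_{V_k}^2 = \irn |\nabla v_k|^2 + \irn V(x+\xi_k)\,v_k^2,
\qquad
\|v_k-v\|_{V_k}^2 = \irn |\nabla(v_k-v)|^2 + \irn V(x+\xi_k)\,(v_k-v)^2.
\]
Since $v_k\rh v$ in $H^1(\rn)$, we have $\nabla v_k\rh\nabla v$ in $L^2$, so $\irn|\nabla v_k|^2 - \irn|\nabla(v_k-v)|^2 \to \irn|\nabla v|^2$ by the standard Hilbert-space identity $\|v_k\|^2 - \|v_k-v\|^2 = 2\langle v_k,v\rangle - \|v\|^2 \to \|v\|^2$ applied to the $L^2$-inner product of gradients. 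Thus the whole statement reduces to showing
\[
\lim_{k\to\infty}\Big(\irn V(x+\xi_k)v_k^2 - \irn V(x+\xi_k)(v_k-v)^2\Big) = V_\infty\irn v^2.
\]

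For the potential part I would expand $V(x+\xi_k)\big(v_k^2-(v_k-v)^2\big) = V(x+\xi_k)\big(2v_k v - v^2\big)$, so the quantity to control is
\[
\irn V(x+\xi_k)\big(2v_k v - v^2\big) = 2\irn V(x+\xi_k)\,v_k v - \irn V(x+\xi_k)\,v^2.
\]
Write $V(x+\xi_k) = V_\infty + \big(V(x+\xi_k)-V_\infty\big)$. The contribution of $V_\infty$ gives $2V_\infty\langle v_k,v\rangle_{L^2} - V_\infty|v|_2^2 \to V_\infty|v|_2^2$ since $v_k\rh v$ in $L^2$ (weak convergence tested against the fixed $L^2$ function $v$, resp. $V_\infty v$). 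It remains to show the error terms
\[
\irn \big(V(x+\xi_k)-V_\infty\big)\,v_k v
\quad\text{and}\quad
\irn \big(V(x+\xi_k)-V_\infty\big)\,v^2
\]
tend to $0$. The key tool is $(V_2)$: $V$ is bounded and $V(x)-V_\infty\to 0$ as $|x|\to\infty$. Given $\eps>0$, split each integral over $B_R(0)$ and its complement. On $B_R(0)$: since $|\xi_k|\to\infty$, for $k$ large $|x+\xi_k|$ is large uniformly for $x\in B_R$, hence $|V(x+\xi_k)-V_\infty|<\eps$ there, and $\int_{B_R}|v_k v|\le |v_k|_2|v|_2$ is bounded; similarly $\int_{B_R}v^2\le|v|_2^2$. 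On $\rn\smallsetminus B_R(0)$: $|V(x+\xi_k)-V_\infty|$ is bounded by a constant $C$ (by $(V_2)$, $V$ is bounded and $V_\infty>0$), and $\int_{\rn\smallsetminus B_R}|v_k v| \le \big(\int_{\rn\smallsetminus B_R}v_k^2\big)^{1/2}|v|_{L^2(\rn\smallsetminus B_R)}$, which is small for $R$ large because $|v|_{L^2(\rn\smallsetminus B_R)}\to 0$ and $|v_k|_2$ is bounded; likewise $\int_{\rn\smallsetminus B_R}v^2$ is small for $R$ large. Choosing $R$ first, then $k$ large, both error terms vanish.

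The main obstacle — though a mild one — is handling the tail of the weak limit $v$ rather than of the sequence $v_k$: one must exploit that $v\in L^2(\rn)$ is a single fixed function whose $L^2$-mass outside large balls is small, and combine this with the uniform $L^2$-bound on $(v_k)$ (which follows from weak convergence) via Cauchy–Schwarz, so that the ``bad'' region where $V(x+\xi_k)-V_\infty$ is not small contributes negligibly. No rate on the decay of $V$ is needed here — only $(V_2)$ — so assumption $(V_3^m)$ plays no role in this lemma.
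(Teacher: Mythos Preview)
Your argument is correct. The paper does not supply its own proof of this lemma but simply refers to \cite[Lemma A.1]{cp}, so there is no in-paper argument to compare against; your self-contained expansion via the Hilbert-space identity for the gradient part and the $\varepsilon$-splitting for the potential part is exactly the standard way to establish such a Brezis--Lieb type relation, and it would serve as a complete substitute for the external reference.
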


\begin{proof}
See \cite[Lemma A.1]{cp}.
\end{proof}

\begin{lemma} \label{lem:infinite orbit}
Let $M\geq 2$. Given an unbounded sequence $(y_k)$ in $\r^M$ and $n\in\n$, after passing to a subsequence, there exist $\alpha_1,\ldots,\alpha_n\in O(M)$ such that $|\alpha_iy_k-\alpha_jy_k|\to\infty$ as $k\to\infty$ whenever $i\neq j$.
\end{lemma}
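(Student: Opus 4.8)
I want to find isometries $\alpha_1,\dots,\alpha_n\in O(M)$ such that the points $\alpha_1 y_k,\dots,\alpha_n y_k$ mutually drift apart. The natural strategy is to first normalize: set $\rho_k:=|y_k|\to\infty$ (along a subsequence where $|y_k|$ is actually large, which exists since the sequence is unbounded) and $\omega_k:=y_k/\rho_k\in\mathbb S^{M-1}$. By compactness of the sphere, after passing to a further subsequence $\omega_k\to\omega_*$ for some $\omega_*\in\mathbb S^{M-1}$. Now pick $n$ distinct unit vectors $e_1,\dots,e_n\in\mathbb S^{M-1}$ — possible because $M\ge 2$ makes the sphere infinite — and choose $\alpha_i\in O(M)$ with $\alpha_i\omega_*=e_i$. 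Then $\alpha_i y_k=\rho_k\,\alpha_i\omega_k$ and $\alpha_i\omega_k\to e_i$, so
\[
|\alpha_i y_k-\alpha_j y_k|=\rho_k\,|\alpha_i\omega_k-\alpha_j\omega_k|\longrightarrow\infty\cdot|e_i-e_j|
\]
since $|e_i-e_j|\to|e_i-e_j|>0$ while $\rho_k\to\infty$; more precisely, for $k$ large $|\alpha_i\omega_k-\alpha_j\omega_k|\ge\tfrac12|e_i-e_j|>0$, hence the product tends to $+\infty$.

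\textbf{Key steps, in order.} (1) Extract a subsequence along which $|y_k|\to\infty$; this uses unboundedness of $(y_k)$. (2) Write $y_k=|y_k|\,\omega_k$ with $\omega_k$ on the unit sphere and pass to a subsequence with $\omega_k\to\omega_*$, using sequential compactness of $\mathbb S^{M-1}$. (3) Fix $n$ pairwise distinct points $e_1,\dots,e_n$ on $\mathbb S^{M-1}$; since $M\ge 2$ the sphere has infinitely many points so this is possible. (4) For each $i$ choose $\alpha_i\in O(M)$ with $\alpha_i\omega_*=e_i$ — transitivity of $O(M)$ on $\mathbb S^{M-1}$. (5) Estimate $|\alpha_i y_k-\alpha_j y_k|=|y_k|\,|\alpha_i\omega_k-\alpha_j\omega_k|$ and use $|\alpha_i\omega_k-\alpha_j\omega_k|\to|e_i-e_j|>0$ together with $|y_k|\to\infty$ to conclude the product diverges for each fixed pair $i\ne j$. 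Since there are finitely many pairs, a single subsequence works for all of them simultaneously.

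\textbf{Main obstacle.} There is really no deep obstacle here; the lemma is elementary and the only care needed is bookkeeping of nested subsequences (which is harmless, as finitely many extractions compose) and the hypothesis $M\ge 2$, which is exactly what guarantees $\mathbb S^{M-1}$ is infinite so that $n$ distinct target points $e_1,\dots,e_n$ exist (for $M=1$ the sphere is $\{\pm1\}$ and the statement fails for $n\ge 3$). One could alternatively phrase step (4) without choosing $\omega_*$ at all: directly pick rotations and note that the angles between $\alpha_i\omega_k$ and $\alpha_j\omega_k$ are eventually bounded below; but going through the limit $\omega_*$ is the cleanest route.
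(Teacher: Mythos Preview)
Your proof is correct and follows essentially the same route as the paper: normalize to the unit sphere, extract a convergent subsequence of directions, and choose $\alpha_1,\dots,\alpha_n\in O(M)$ sending the limit direction to $n$ distinct points of $\mathbb S^{M-1}$, so that the pairwise distances scale like $|y_k|$ times a quantity bounded below. Your write-up is in fact slightly more explicit than the paper's (you spell out the transitivity of $O(M)$ on the sphere and the preliminary extraction of a subsequence with $|y_k|\to\infty$), but the argument is the same.
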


\begin{proof} Choose a subsequence of $(y_k)$ such that $y_k\neq 0$ and $\frac{y_k}{|y_k|}\to y$ in $\r^{M}$.
Since $M\geq 2$, for any given $n\in\n$, there exist $\alpha_1,\ldots,\alpha_n\in O(M)$ such that $\alpha_iy\neq\alpha_jy$ whenever $i\neq j$. Therefore, there exist $d>0$ and $k_0\in\n$ such that
$$\left|\alpha_i\frac{y_k}{|y_k|}-\alpha_j\frac{y_k}{|y_k|}\right|\geq d\qquad\text{if \ }k\geq k_0\text{ \ and \ }i\neq j.$$
It follows that $|\alpha_iy_k-\alpha_jy_k|\geq d|y_k|\to\infty$, as claimed.
\end{proof}

\begin{theorem} \label{thm:splitting} 
Let $N=4$ or $N\geq 6$, $G:=\z/m\z\times O(N-4)$ and $\bf u_k=(u_{k,1},\ldots,u_{k,\ell})\in\mathcal{N}_V^G$ be such that $\mathcal{J}_V(\bf u_k)\to c_V^G$ and $u_{k,i}\geq 0$. Then, after passing to a subsequence,
\begin{itemize}
\item[$(I)$] either $\bf u_k\to\bf u=(u_1,\ldots,u_\ell)$ strongly in $\mathscr{H}^G$ and $u_i\geq 0$,
\item[$(II)$] or there are points $(\zeta_k,0)\in\cc^2\times\r^{N-4}\equiv\rn$ such that $|\zeta_{k}|\to\infty$,
$$\lim_{k\to\infty}\Big\|u_{k,1}-\sum_{j=0}^{m-1}\omega\big( \ \cdot \ -\,(\vartheta_m^j\zeta_k,0)\big)\Big\|=0,$$
and \ $c_V^G = \ell m\mathfrak{c}_\infty$, where $\omega$ is the least energy positive radial solution to \eqref{eq:limit_problem} and $\mathfrak{c}_\infty$ is its energy \eqref{eq:frak_c}.
\end{itemize}
\end{theorem}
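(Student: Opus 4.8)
The plan is to run a concentration–compactness argument on the minimizing sequence $\mathbf u_k$ on the Nehari manifold $\mathcal N_V^G$. Since $\mathcal J_V(\mathbf u_k)\to c_V^G$ and $\mathbf u_k\in\mathcal N_V^G$, Proposition~\ref{prop:nehari}$(b)$ and the Nehari identity give uniform bounds $\|u_{k,i}\|_V^2\geq a_0$ and $\|u_{k,i}\|_V^2\leq C$ for all $i$; hence, after a subsequence, $u_{k,i}\rh u_i$ weakly in $H^1(\rn)$, $u_i\geq 0$, and $\mathbf u=(u_1,\dots,u_\ell)\in\mathscr H^G$. The first step is to dispose of the competitive term: since $\beta<0$, each $\irn|u_{k,i}|^p|u_{k,j}|^p\geq 0$, so from the Nehari identity $\|u_{k,i}\|_V^2\leq\irn|u_{k,i}|^{2p}$, which together with the Sobolev embedding forces $|u_{k,i}|_{2p}^{2p}\geq c>0$ uniformly. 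This is where non-vanishing comes from. I would next split into cases according to whether $\mathbf u\neq 0$.

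If $\mathbf u\neq 0$, then by condition $(S_2)$ every component $u_i$ is nontrivial, and I claim $\mathbf u_k\to\mathbf u$ strongly, giving case $(I)$. The argument is the usual Brezis--Lieb plus weak convergence of the derivative: writing $v_{k,i}:=u_{k,i}-u_i$, one has $\|u_{k,i}\|_V^2=\|u_i\|_V^2+\|v_{k,i}\|_V^2+o(1)$ and $|u_{k,i}|_{2p}^{2p}=|u_i|_{2p}^{2p}+|v_{k,i}|_{2p}^{2p}+o(1)$, and by the compactness of the embedding $H^1(\rn)^G\hookrightarrow L^{2p}(\rn)$ for $G$ containing $O(N-4)$ (here $N=4$ or $N\geq 6$ is used, via the fact that the $G$-orbits of points off the fixed-point subspace $\{z=0\}$ are infinite or the orbit dimension is large enough) one gets $|v_{k,i}|_{2p}\to 0$ when the concentration happens near the origin; the cross terms $\irn|v_{k,i}|^p|v_{k,j}|^p\to 0$ likewise. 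Then $\|v_{k,i}\|_V^2\to 0$, i.e. strong convergence, and $\mathbf u$ is a critical point, hence a least energy $G$-pinwheel solution with $u_i\geq 0$.

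If $\mathbf u=0$, the $L^{2p}$-mass of $u_{k,1}$ cannot vanish (by the lower bound above) and, because $u_{k,1}$ is $G$-invariant, it cannot concentrate at the origin either (it would then be forced to zero by compactness). So by Lions' lemma there is a sequence of points at which mass concentrates; by $G$-invariance their whole orbit carries mass, and since the orbit must escape to infinity we may take it of the form $\{(\vartheta_m^j\zeta_k,0):j=0,\dots,m-1\}$ with $|\zeta_k|\to\infty$ (after discarding the $O(N-4)$ directions, whose orbits are bounded only if the point lies in $\{y=0\}$, which we arrange). Translating $u_{k,1}$ by $(\zeta_k,0)$ and using Lemma~\ref{lem:Vinfty} together with $(V_3^m)$-type decay reduces the limiting equation to the autonomous one \eqref{eq:limit_problem}, whose positive solutions are $\omega$ up to translation; an induction peeling off one bubble at a time (controlled by the fixed energy budget) shows exactly $m$ bubbles appear in $u_{k,1}$, placed at the $\vartheta_m^j\zeta_k$, with no residual mass, giving the stated profile convergence. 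Counting energy, $\mathcal J_V(\mathbf u_k)\to \ell m\mathfrak c_\infty$, hence $c_V^G=\ell m\mathfrak c_\infty$. Finally I would invoke Proposition~\ref{prop:nehari}$(e)$ to note $c_V^G\leq\ell m\mathfrak c_\infty$ always, so the dichotomy is genuinely between the two cases.

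The main obstacle is the bubbling analysis in case $(II)$: one must rule out mass concentrating on the fixed-point subspace $\{z=0\}$ (handled by compactness of the symmetric Sobolev embedding, which is exactly where the dimensional restriction $N=4$ or $N\geq 6$ enters), must keep track of the $G$-orbit structure so that the bubbles assemble into the sum $\sum_{j=0}^{m-1}\omega(\cdot-(\vartheta_m^j\zeta_k,0))$ rather than a less structured configuration, and must show that the energy budget forces precisely one orbit of bubbles with zero remainder. Controlling the potential's deviation from $V_\infty$ along the escaping sequence, via Lemma~\ref{lem:Vinfty}, is routine by comparison; the delicate point is the interplay between the symmetry group and the location of concentration.
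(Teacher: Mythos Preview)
Your proposal has the right overall shape but contains two genuine gaps.

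First, you never invoke Ekeland's variational principle to upgrade $(\mathbf u_k)$ to a Palais--Smale sequence. The paper's proof begins by arranging $\mathcal J_V'(\mathbf u_k)\to 0$ in $(\mathscr H^G)'$, and this is essential in both alternatives. In your case $\mathbf u\neq 0$ it is what shows the weak limit $\mathbf u$ is itself a critical point, hence lies on $\mathcal N_V^G$; after that the energy comparison
\[
c_V^G\leq\mathcal J_V(\mathbf u)=\tfrac{p-1}{2p}\sum_i\|u_i\|_V^2\leq\liminf_{k}\tfrac{p-1}{2p}\sum_i\|u_{k,i}\|_V^2=c_V^G
\]
forces strong convergence directly, with no appeal to any compact embedding. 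In case $(II)$ it is what yields the limiting equation for the translated weak limit $w_1$, from which one gets $\|w_1\|_{V_\infty}^2\leq|w_1|_{2p}^{2p}$ (using $\beta<0$) and then, by projecting onto $\cM_\infty$ and comparing with the budget $c_V^G\leq\ell m\mathfrak c_\infty$, that $w_1$ is exactly a ground state of \eqref{eq:limit_problem}. Without the Palais--Smale information you have no equation for the weak limits and neither step goes through.

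Second, your substitute for this in case $(I)$---the claimed compactness of $H^1(\rn)^G\hookrightarrow L^{2p}(\rn)$---is false. The group $G=\z/m\z\times O(N-4)$ acts with \emph{finite} orbits (of size at most $m$) on points $(z,0)\in\cc^2\times\{0\}$ with $z\neq 0$, so mass can escape to infinity in the $\cc^2$ direction; this is precisely the mechanism that produces alternative $(II)$, and it cannot simultaneously furnish compactness for $(I)$. The hypothesis $N=4$ or $N\geq 6$ serves only to rule out escape in the $\r^{N-4}$ direction (trivially when $N=4$; via Lemma~\ref{lem:infinite orbit} when $N\geq 6$, where $O(N-4)$-orbits of nonzero $y$ are infinite), not to give global compactness. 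Your sentence ``$G$-orbits of points off the fixed-point subspace $\{z=0\}$ are infinite'' has the roles of $z$ and $y$ reversed. Finally, only $(V_1)$ and $(V_2)$ are used in this theorem; $(V_3^m)$ plays no role, and Lemma~\ref{lem:Vinfty} requires only $(V_2)$.
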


\begin{proof}
The proof is similar to that of \cite[Theorem 3.4]{cp}. We include it for completeness. Using Ekeland's variational principle \cite[Theorem 8.5]{W}, we can assume that $\mathcal{J}_V'(\bf u_k)\to 0$ in $(\mathscr{H}^G)'$. Since $\beta<0$ we derive from Proposition \ref{prop:nehari}$(b)$ that there exists $a_0>0$ such that
$$\irn|u_{k,1}|^{2p}>a_0\qquad\forall k\in\n.$$
Then, by Lions' lemma \cite[Lemma 1.21]{W}, there exist $\delta>0$ and $x_k=(z_k,y_k)\in \mathbb{C}^2\times\r^{N-4}\equiv\rn$ such that, after passing to a subsequence,
\begin{equation*}
\int_{B_1(x_k)}|u_{k,1}|^{2p}>\delta\qquad\forall k\in\n.
\end{equation*}
We choose a subsequence of $(x_k)$ having one of the following three properties:
\begin{itemize}
\item[$(1)$] either $|x_k|\leq C_0$ for all $k\in\n$ and some $C_0>0$,
\item[$(2)$] or $|y_k|\leq C_0$ for all $k\in\n$ and some $C_0>0$ and $|z_k|\to\infty$, 
\item[$(3)$] or $y_k\neq 0$ for all $k\in\n$ and $|y_k|\to\infty$. 
\end{itemize}
In the first case we define $\xi_k:=0$, in the second one we set $\xi_k:=(z_k,0)$ and in the third one we take $\xi_k:=x_k$. We define
$$w_{k,1}(x):=u_{k,1}(x+\xi_k).$$
Then, in all three cases, $|x_k-\xi_k|\leq C_0$ and
\begin{equation} \label{eq:nonnull}
\int_{B_{C_0+1}(0)}|w_{k,1}|^{2p}=\int_{B_{C_0+1}(\xi_k)}|u_{k,1}|^{2p}\geq\int_{B_1(x_k)}|u_{k,1}|^{2p}>\delta>0\qquad\forall k\in\n.
\end{equation}
Now we analyze each one of these cases.

First, we show that $(3)$ cannot occur. Indeed, since in this case $N\geq 6$, Lemma \ref{lem:infinite orbit} states that, for any given $n\in\n$ there exist $\alpha_1,\ldots,\alpha_n\in O(N-4)$ such that $|\alpha_ix_k-\alpha_jx_k|=|\alpha_iy_k-\alpha_jy_k|\geq 2$ for large enough $k$ and $i\neq j$. As $u_{k,1}$ is $G$-invariant, equation \eqref{eq:nonnull} yields
\begin{equation*}
\irn|u_{k,1}|^{2p}\geq\sum_{j=1}^n\int_{B_1(\alpha_jx_k)}|u_{k,1}|^{2p}=n\int_{B_1(x_k)}|u_{k,1}|^{2p}>n\delta\qquad\forall n\in\n,
\end{equation*}
contradicting the fact that $(u_{k,1})$ is bounded in $L^{2p}(\rn)$.
\smallskip  

Assume $(1).$ \ Then $\xi_k=0$. Passing to a subsequence, we have that $u_{k,i}\rh u_i$ weakly in $H^1(\rn)$, $u_{k,i}\to u_i$ in $L^{2p}_\mathrm{loc}(\rn)$ and $u_{k,i}\to u_i$ a.e. in $\rn$. Hence $\bf u=(u_1,\ldots,u_\ell)\in\mathscr{H}^G$, $u_i\geq 0$ and from \eqref{eq:nonnull} we get that $u_i\neq 0$. Since
\begin{align*}
0=\lim_{k\to\infty}\partial_1\mathcal{J}_V(\bf u_k)\vp=\partial_1\mathcal{J}_V(\bf u)\vp\qquad\text{for every \ }\vp\in\cC^\infty_c(\rn),
\end{align*}
we have that $\bf u\in\mathcal{N}_V^G$. Therefore,
\begin{align*}
c_V^G\leq\mathcal{J}_V(\bf u)=\frac{p-1}{2p}\sum_{i=1}^\ell\|u_i\|_V^2\leq\liminf_{k\to\infty}\frac{p-1}{2p}\sum_{i=1}^\ell\|u_{k,i}\|_V^2 =\lim_{k\to\infty}\mathcal{J}_V(\bf u_k)= c_V^G.
\end{align*}
As a consequence, $\bf u_k\to\bf u$ strongly in $\mathscr{H}^G$, i.e., statement $(I)$ holds true.
\smallskip

Assume $(2).$ \ Then $\xi_k = (z_k,0) \in \cc^2 \times \mathbb{R}^{N-4}$, $|z_k|\to\infty$ and $w_{k,1}$ is $O(N-4)$-invariant. Since the sequence $(w_{k,1})$ is bounded in $H^1(\rn)$, passing to a subsequence, $w_{k,1}\rh w_1$ weakly in $H^1(\rn)$, $w_{k,1}\to w_1$ in $L^{2p}_\mathrm{loc}(\rn)$ and $w_{k,1}\to w_1$ a.e. in $\rn$. Hence, $w_1$ is $O(N-4)$-invariant, $w_1\geq 0$ and, by \eqref{eq:nonnull}, $w_1\neq 0$. As $|\vartheta_m^j\xi_k-\vartheta_m^i\xi_k|\to\infty$ for any $i,j\in\z/m\z$ with $i\neq j$, we have that
$$w_{k,1}\circ\vartheta_m^{-j}-\sum_{i=j+1}^{m-1}(w_1\circ\vartheta_m^{-i})(\,\cdot\,-\vartheta_m^i\xi_k+\vartheta_m^j\xi_k)\rh w_1\circ\vartheta_m^{-j}\qquad\text{weakly in \ }H^1(\rn).$$ 
Setting $V_k(x):=V(x+\xi_k)$, Lemma \ref{lem:Vinfty} gives
\begin{align*}
&\Big\|w_{k,1}\circ\vartheta_m^{-j}-\sum_{i=j+1}^{m-1}(w_1\circ\vartheta_m^{-i})(\,\cdot\,-\vartheta_m^i\xi_k+\vartheta_m^j\xi_k)\Big\|_{V_k}^2\\
&\qquad=\Big\|w_{k,1}\circ\vartheta_m^{-j}-\sum_{i=j}^{m-1}(w_1\circ\vartheta_m^{-i})(\,\cdot\,-\vartheta_m^i\xi_k+\vartheta_m^j\xi_k)\Big\|_{V_k}^2+ \|w_1\circ\vartheta_m^{-j}\|_{V_\infty}^2 +o(1).
\end{align*}
Since $u_{k,1}$ is $G$-invariant, performing the change of variable $\widetilde{x}=x-\vartheta_m^j\xi_k$ yields
\begin{align*}
\Big\|u_{k,1}-\sum_{i=j+1}^{m-1}(w_1\circ\vartheta_m^{-i})(\,\cdot\,-\vartheta_m^i\xi_k)\Big\|_V^2
=\Big\|u_{k,1}-\sum_{i=j}^{m-1}(w_1\circ\vartheta_m^{-i})(\,\cdot\,-\vartheta_m^i\xi_k)\Big\|_V^2+\|w_1\|_{V_\infty}^2+o(1),
\end{align*}
and iterating this identity we obtain
\begin{equation} \label{eq:norms}
\|u_{k,1}\|_V^2=\Big\|u_{k,1}-\sum_{i=0}^{m-1}(w_1\circ\vartheta_m^{-i})(\,\cdot\,-\vartheta_m^i\xi_k)\Big\|_V^2+m\|w_1\|_{V_\infty}^2+o(1).
\end{equation}
Now, for any $v\in H^1(\rn)$ we set $v_{k}(x):=v(x-\xi_{k})$ and we define $w_{k,i}(x):=u_{k,i}(x+\xi_k)$. Then, a subsequence satisfies $w_{k,i}\rh w_i$ weakly in $H^1(\rn)$. Performing a change of variable we derive
$$o(1)=\partial_1\mathcal{J}_V(\bf u_k)v_k =\irn(\nabla w_{k,1}\cdot\nabla v+V_kw_{k,1}v)-\irn|w_{k,1}|^{2p-2}w_{k,1}v -\beta\sum_{j=2}^\ell\irn|w_{k,j}|^p|w_{k,1}|^{p-2}w_{k,1}v,$$
and passing to the limit as $k\to\infty$ we get
\begin{equation}\label{eq:projectionnehari}
0=\irn(\nabla w_1\cdot\nabla v+V_\infty w_1v)-\irn|w_1|^{2p-2}w_1v-\beta\sum_{j=2}^\ell\irn|w_j|^p|w_1|^{p-2}w_1v.
\end{equation}
Therefore,
\begin{equation*}
\|w_1\|_{V_\infty}^2=\irn|w_1|^{2p}+\beta\sum_{j=2}^\ell\irn|w_j|^p|w_1|^{p}\leq\irn|w_1|^{2p},
\end{equation*}
so there exists $t\in(0,1]$ such that $\|tw_1\|_{V_\infty}^2=\irn|tw_1|^{2p}$. It follows that $tw_1\in\cM_\infty$, and from equation \eqref{eq:norms} and Proposition \ref{prop:nehari}$(e)$ we derive
\begin{align*}
m\mathfrak{c}_\infty\leq\frac{p-1}{2p}m\|tw_1\|_{V_\infty}^2\leq\frac{p-1}{2p}m\|w_1\|_{V_\infty}^2\leq\lim_{k\to\infty}\frac{p-1}{2p}\|u_{k,1}\|_V^2=\frac{1}{\ell}c_V^G\leq m\mathfrak{c}_\infty.
\end{align*}
Therefore, $t=1$, $w_1\in\cM_\infty$ and $J_\infty(w_1)=\frac{p-1}{2p}\|w_1\|_{V_\infty}^2=\mathfrak{c}_\infty$, i.e., $w_1$ is a least energy solution of \eqref{eq:limit_problem}. Moreover, from \eqref{eq:norms} we get that
$$\lim_{k\to\infty}\Big\|u_{k,1}-\sum_{j=0}^{m-1}(w_1\circ\vartheta_m^{-j })(\,\cdot\,-\vartheta_m^j\xi_k)\Big\|_V^2=0.$$
Since the positive least energy solution to \eqref{eq:limit_problem} is unique up to translation and $w_1$ is $O(N-4)$-invariant, there exists $\xi=(\zeta,0)\in\cc^2\times\r^{N-4}$ such that $w_1(x)=\omega(x-\xi)$. Hence, $(w_1\circ\vartheta_m^{-j })(x-\vartheta_m^j\xi_k)=\omega(\vartheta_m^{-j}x-\xi_k-\xi)=\omega(x-\vartheta_m^j(\xi_k+\xi))$. So, setting $\zeta_k:=z_k+\zeta$, we obtain
$$\lim_{k\to\infty}\Big\|u_{k,1}-\sum_{j=0}^{m-1}\omega\big( \ \cdot \ -\,(\vartheta_m^j\zeta_k,0)\big)\Big\|_V=0.$$
This shows that statement $(II)$ holds true.
\end{proof}

\begin{corollary}\label{cor:compactness}
If $N=4$ or $N\geq 6$, $G:=\z/m\z\times O(N-4)$ and $c_V^G < \ell m\mathfrak{c}_\infty$, the system \eqref{eq:system} has a least energy $\left(\z/m\z\times O(N-4)\right)$-pinwheel solution.
\end{corollary}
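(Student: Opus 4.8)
The plan is to deduce Corollary~\ref{cor:compactness} directly from Theorem~\ref{thm:splitting}. Take a minimizing sequence $\bf u_k\in\mathcal{N}_V^G$ for $c_V^G$, that is, $\mathcal{J}_V(\bf u_k)\to c_V^G$. First I would arrange that the components are nonnegative: replacing each $u_{k,i}$ by $|u_{k,i}|$ does not increase $\mathcal{J}_V$ (the gradient term is unchanged, the $|u_i|_{2p}^{2p}$ term is unchanged, and since $\beta<0$ the coupling term $-\frac{\beta}{2p}\irn|u_i|^p|u_j|^p$ is also unchanged because it already depends only on the moduli), and it preserves membership in $\mathscr{H}^G$ and in $\mathcal{N}_V^G$ — note that the relations $(S_1^G)$ and $(S_2)$ are stable under taking absolute values since $\vr_\ell^n$ and the $G$-action are just precompositions with isometries. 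So without loss of generality $u_{k,i}\geq 0$ for all $k,i$.

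Next, Theorem~\ref{thm:splitting} applies to this sequence and produces, after passing to a subsequence, one of the two alternatives $(I)$ or $(II)$. Alternative $(II)$ forces $c_V^G=\ell m\mathfrak{c}_\infty$, which contradicts the standing hypothesis $c_V^G<\ell m\mathfrak{c}_\infty$. Hence alternative $(I)$ must hold: $\bf u_k\to\bf u=(u_1,\ldots,u_\ell)$ strongly in $\mathscr{H}^G$ with $u_i\geq 0$. By continuity of $\mathcal{J}_V$ and $\mathcal{J}_V'$, and since (as recalled in the proof of Theorem~\ref{thm:splitting} via Ekeland's principle) we may assume $\mathcal{J}_V'(\bf u_k)\to 0$, strong convergence gives $\mathcal{J}_V(\bf u)=c_V^G$ and $\mathcal{J}_V'(\bf u)=0$. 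It remains to check that $\bf u\in\mathcal{N}_V^G$, i.e. that $\bf u\neq 0$: this follows from Proposition~\ref{prop:nehari}$(b)$, which gives $\|\bf u_k\|_V^2\geq a_0>0$ uniformly, so by strong convergence $\|\bf u\|_V^2\geq a_0>0$. Therefore $\bf u$ is a fully nontrivial critical point of $\mathcal{J}_V|_{\mathscr{H}^G}$ with $\mathcal{J}_V(\bf u)=c_V^G$, i.e. a least energy $G$-pinwheel solution.

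There is essentially no obstacle here; the corollary is a packaging of the dichotomy in Theorem~\ref{thm:splitting}. The only point requiring a word of care is the reduction to nonnegative components (to legitimately invoke the theorem, whose hypothesis includes $u_{k,i}\geq 0$), and the observation that strong $H^1$-convergence together with the uniform Nehari lower bound rules out the limit being the zero function. One could alternatively phrase the whole argument as: the infimum $c_V^G$ is attained because a minimizing sequence is, after the above normalizations, compact in $\mathscr{H}^G$ by Theorem~\ref{thm:splitting}$(I)$, and the limit lies on $\mathcal{N}_V^G$ which is a natural constraint by Proposition~\ref{prop:nehari}$(c)$, so the limit is a critical point of the unconstrained functional on $\mathscr{H}^G$, hence a $G$-pinwheel solution of \eqref{eq:system}.
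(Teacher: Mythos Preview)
Your proof is correct and is exactly the intended argument: the paper states the corollary without proof, treating it as an immediate consequence of the dichotomy in Theorem~\ref{thm:splitting}, and you have simply unpacked that consequence. The only detail you added beyond the paper's implicit reasoning is the reduction to $u_{k,i}\geq 0$ via absolute values, which is indeed harmless for the reasons you gave (the functional, the Nehari constraint, and the symmetries $(S_1^G)$, $(S_2)$ all depend only on moduli or are compatible with taking absolute values).
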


When $V \equiv V_\infty$ the following statement holds true for every $N\geq 4$.

\begin{theorem} \label{thm:splitting2} 
Let $N\geq 4$, $G:=\z/m\z $ and $\bf v_k=(v_{k,1},\ldots,v_{k,\ell})\in\mathcal{N}_{V_\infty}^G$ be such that $\mathcal{J}_{V_\infty}(\bf v_k)\to c_{V_\infty}^G$ and $v_{k,i}\geq 0$. Then, after passing to a subsequence, there is a sequence $(y_k)$ in $\r^{N-4}$ such that if we set
$$\bf u_k:=(u_{k,1},\ldots,u_{k,\ell})\quad\text{with \ }u_{k,i}(z,y):=v_{k,i}(z,y-y_k)\text{ \ for all \ }(z,y)\in\cc^2\times\r^{N-4},$$
we have that $\bf u_k\in\mathcal{N}_{V_\infty}^G$, $\mathcal{J}_{V_\infty}(\bf u_k)\to c_{V_\infty}^G$ and
\begin{itemize}
\item[$(I)$] either $\bf u_k\to\bf u=(u_1,\ldots,u_\ell)$ strongly in $\mathscr{H}^G$ and $u_i\geq 0$,
\item[$(II)$] or there are points $(\zeta_k,\eta)\in\cc^2\times\r^{N-4}$ such that $|\zeta_{k}|\to\infty$,
$$\lim_{k\to\infty}\Big\|u_{k,1}-\sum_{j=0}^{m-1}\omega\big( \ \cdot \ -\,(\vartheta_m^j\zeta_k,\eta)\big)\Big\|_{V_\infty}=0,$$
and \ $c_{V_\infty}^G = \ell m\mathfrak{c}_\infty$.
\end{itemize}
\end{theorem}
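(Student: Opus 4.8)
The plan is to mirror the structure of the proof of Theorem \ref{thm:splitting}, exploiting that when $V\equiv V_\infty$ the problem is invariant under translations in the $y$-variable (since $O(N-4)$ no longer appears in $G$, the functional $\mathcal{J}_{V_\infty}$ is invariant under $(z,y)\mapsto(z,y-y_0)$ for any $y_0\in\r^{N-4}$, and also under the action of $\z/m\z$ on the $z$-variable). As before, we first invoke Ekeland's variational principle to replace the minimizing sequence $\bf v_k$ by one with $\mathcal{J}_{V_\infty}'(\bf v_k)\to 0$, and use Proposition \ref{prop:nehari}$(b)$ together with Lions' lemma to locate points $x_k=(z_k,y_k)\in\cc^2\times\r^{N-4}$ with $\int_{B_1(x_k)}|v_{k,1}|^{2p}>\delta>0$. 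The key difference from Theorem \ref{thm:splitting} is that we now use the translation $u_{k,i}(z,y):=v_{k,i}(z,y-y_k)$ to center the mass in the $y$-direction; after this translation the sequence $(u_{k,1})$ has a nonvanishing bump near $(z_k,0)$, and since $V\equiv V_\infty$ is translation-invariant, $\bf u_k\in\mathcal{N}_{V_\infty}^G$ and $\mathcal{J}_{V_\infty}(\bf u_k)=\mathcal{J}_{V_\infty}(\bf v_k)\to c_{V_\infty}^G$.

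Next I would dichotomize on the behavior of $(z_k)$: either $|z_k|\leq C_0$ along a subsequence, or $|z_k|\to\infty$. In the first (bounded) case, $(u_{k,i})$ is centered and one argues exactly as in case $(1)$ of Theorem \ref{thm:splitting}: passing to weak limits $u_{k,i}\rh u_i$, one checks $\bf u=(u_1,\ldots,u_\ell)\in\mathscr{H}^G$ with $u_i\geq 0$ and $u_i\neq 0$ (from the nonvanishing integral), that $\bf u\in\mathcal{N}_{V_\infty}^G$ by passing to the limit in $\partial_1\mathcal{J}_{V_\infty}(\bf u_k)\vp=o(1)$, and then the chain of inequalities $c_{V_\infty}^G\leq\mathcal{J}_{V_\infty}(\bf u)\leq\liminf\mathcal{J}_{V_\infty}(\bf u_k)=c_{V_\infty}^G$ forces strong convergence; this is alternative $(I)$. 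In the second case $|z_k|\to\infty$, I set $\xi_k:=(z_k,0)$ and run the splitting argument of case $(2)$ of Theorem \ref{thm:splitting} verbatim, now with the constant potential $V_\infty$ (so Lemma \ref{lem:Vinfty} is applied with $V_k\equiv V_\infty$, and the norm identity \eqref{eq:norms} holds with $\|\cdot\|_V$ replaced by $\|\cdot\|_{V_\infty}$): one peels off $m$ translated copies of the weak limit $w_1$ along the $\z/m\z$-orbit of $\xi_k$, uses the limiting Nehari inequality together with Proposition \ref{prop:nehari}$(e)$ to conclude $w_1\in\cM_\infty$, $J_\infty(w_1)=\mathfrak{c}_\infty$, and $c_{V_\infty}^G=\ell m\mathfrak{c}_\infty$. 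Since the positive least energy solution of \eqref{eq:limit_problem} is unique up to translation, $w_1(x)=\omega(x-(\zeta,\eta))$ for some $(\zeta,\eta)\in\cc^2\times\r^{N-4}$; setting $\zeta_k:=z_k+\zeta$ gives alternative $(II)$.

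The main obstacle — or rather the point that needs the extra care relative to the radial case — is the absence of the $O(N-4)$-symmetry, which in Theorem \ref{thm:splitting} was precisely what ruled out the escape-to-infinity-in-$y$ scenario (case $(3)$ there, excluded via Lemma \ref{lem:infinite orbit}). Here that mechanism is unavailable and must be replaced by the translation $y\mapsto y-y_k$: the content of the statement is that, after recentering, the only directions in which mass can still escape are governed by the $\z/m\z$-action on $z$, which is exactly the case $(2)$ analysis. One must verify that this recentering is consistent — i.e., that after translating in $y$ the located bump really does sit at bounded $y$-distance from the origin, so that \eqref{eq:nonnull} holds with $C_0$ independent of $k$ (indeed $|x_k-\xi_k|\leq|z_k-z_k|+|y_k-0|\cdot\mathbf{1}=0$ in the centered variable, i.e. after the shift the bump is at $(z_k,0)$, so $C_0$ can be taken $=0$ for the $y$-component) — and that the translated sequence remains a minimizing Nehari sequence, which is immediate from translation invariance of $\mathcal{J}_{V_\infty}$ and of the constraint. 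No dimensional restriction $N\geq 6$ is needed since we never invoke Lemma \ref{lem:infinite orbit}; this is why the conclusion holds for all $N\geq 4$.
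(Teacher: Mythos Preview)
Your proposal is correct and follows essentially the same route as the paper's own proof: translate in the $y$-variable to kill the escape-in-$y$ scenario (thereby dispensing with case~$(3)$ and Lemma~\ref{lem:infinite orbit}, which is why $N\geq 4$ suffices), then dichotomize on $|z_k|$ and reproduce cases~$(1)$ and~$(2)$ of Theorem~\ref{thm:splitting} verbatim, noting that $w_1$ need not be $O(N-4)$-invariant so the center of $\omega$ is a general $(\zeta,\eta)$. The paper's argument is identical in structure and detail.
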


\begin{proof}
Arguing as in the proof of Theorem \ref{thm:splitting} we see that there exist $\delta>0$ and $x_k=(z_k,y_k)\in \mathbb{C}^2\times\r^{N-4}\equiv\rn$ such that, after passing to a subsequence,
\begin{equation*}
\int_{B_1(x_k)}|v_{k,1}|^{2p}>\delta\qquad\forall k\in\n.
\end{equation*}
Then the functions $u_{k,i}$ defined by $u_{k,i}(z,y):=v_{k,i}(z,y-y_k)$ are $\z/m\z$-invariant and $u_{k,i+1}=u_{k,i}\circ\vr_\ell$, so $\bf u_k\in\mathscr{H}^{\z/m\z}$ and, since $\|u_{k,i}\|_{V_\infty}=\|v_{k,i}\|_{V_\infty}$, we have that $\bf u_k\in\mathcal{N}_{V_\infty}^G$ and $\mathcal{J}_{V_\infty}(\bf u_k)\to c_{V_\infty}^G$. Furthermore,
\begin{equation*}
\int_{B_1(z_k,0)}|u_{k,1}|^{2p}=\int_{B_1(x_k)}|v_{k,1}|^{2p}>\delta\qquad\forall k\in\n.
\end{equation*} 
After passing to a subsequence, there are only two cases:
\begin{itemize}
\item[$(1')$] either $|z_k|\leq C_0$ for all $k$ and some $C_0>0$,
\item[$(2')$] or $|z_k|\to\infty$.
\end{itemize}
In the first case we set $\xi_k:=0$, in the second one we take $\xi_k:=(z_k,0)$, and we define
$$w_{k,1}(x):=u_{k,1}(x+\xi_k).$$
From this point on, we just follow the proof of Theorem \ref{thm:splitting} (noting that this time there is no case $(3)$ and that the functions $w_{k,1}$ and $w_1$ need not be $O(N-4)$-invariant).
\end{proof}

\begin{corollary}\label{cor:compactness2}
If $N\geq 4$, $G:=\z/m\z $ and $c_{V_\infty}^G < \ell m\mathfrak{c}_\infty$, the system \eqref{eq:system} with $V\equiv V_\infty$ has a least energy $\left(\z/m\z\right)$-pinwheel solution.
\end{corollary}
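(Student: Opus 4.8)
The plan is to run the direct method on the Nehari manifold $\mathcal{N}_{V_\infty}^G$ of the functional $\mathcal{J}_{V_\infty}$ and to use Theorem \ref{thm:splitting2} to pass from a minimizing sequence to a strongly convergent one; the hypothesis $c_{V_\infty}^G<\ell m\mathfrak{c}_\infty$ is exactly what is needed to discard the splitting alternative $(II)$.

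First I would produce a \emph{nonnegative} minimizing sequence. For any $\bf u=(u_1,\dots,u_\ell)\in\mathcal{N}_{V_\infty}^G$ the function $|\bf u|:=(|u_1|,\dots,|u_\ell|)$ again belongs to $\mathscr{H}^G$ (each $|u_i|$ is $G$-invariant since $u_i$ is, and $|u_{i+1}|=|u_i|\circ\vr_\ell$, $|u_1|=|u_\ell|\circ\vr_\ell$ follow from $(S_2)$), it is nonzero, and it satisfies $\mathcal{J}_{V_\infty}'(|\bf u|)|\bf u|=0$ because $\|\,|u_i|\,\|_{V_\infty}=\|u_i\|_{V_\infty}$ while the $L^{2p}$ and coupling terms in $\mathcal{J}_{V_\infty}$ only involve $|u_i|$; for the same reason $\mathcal{J}_{V_\infty}(|\bf u|)=\mathcal{J}_{V_\infty}(\bf u)$. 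Hence, starting from an arbitrary minimizing sequence for $c_{V_\infty}^G$ and replacing each term by its componentwise absolute value, I obtain $\bf v_k\in\mathcal{N}_{V_\infty}^G$ with $v_{k,i}\geq 0$ and $\mathcal{J}_{V_\infty}(\bf v_k)\to c_{V_\infty}^G$.

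Next I apply Theorem \ref{thm:splitting2} to $(\bf v_k)$ (whose proof makes the sequence Palais--Smale via Ekeland's variational principle, as in the proof of Theorem \ref{thm:splitting}). After passing to a subsequence I get translations $\bf u_k$, still in $\mathcal{N}_{V_\infty}^G$ with $\mathcal{J}_{V_\infty}(\bf u_k)\to c_{V_\infty}^G$, for which either $(I)$ or $(II)$ holds. Alternative $(II)$ forces $c_{V_\infty}^G=\ell m\mathfrak{c}_\infty$, contradicting the hypothesis, so $(I)$ holds: $\bf u_k\to\bf u=(u_1,\dots,u_\ell)$ strongly in $\mathscr{H}^G$, with $u_i\geq 0$.

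Finally I check that $\bf u$ is a least energy $(\z/m\z)$-pinwheel solution. Proposition \ref{prop:nehari}$(b)$ gives $\|\bf u_k\|_{V_\infty}^2\geq a_0>0$, hence $\|\bf u\|_{V_\infty}^2\geq a_0>0$, so $\bf u\neq 0$ and, by $(S_2)$, every component is nontrivial. Strong convergence and $\mathcal{J}_{V_\infty}\in\cC^1$ yield $\mathcal{J}_{V_\infty}'(\bf u)\bf u=\lim_k\mathcal{J}_{V_\infty}'(\bf u_k)\bf u_k=0$, so $\bf u\in\mathcal{N}_{V_\infty}^G$, and $\mathcal{J}_{V_\infty}(\bf u)=\lim_k\mathcal{J}_{V_\infty}(\bf u_k)=c_{V_\infty}^G$. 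Since $\mathcal{N}_{V_\infty}^G$ is a natural constraint for $\mathcal{J}_{V_\infty}$ (Proposition \ref{prop:nehari}$(c)$), $\bf u$ is a critical point of $\mathcal{J}_{V_\infty}|_{\mathscr{H}^G}$, and by the principle of symmetric criticality \cite[Theorem 1.28]{W} it solves \eqref{eq:system} with $V\equiv V_\infty$ and satisfies $(S_1^G)$ and $(S_2)$. I do not expect a genuine obstacle here: all the compactness analysis is already packaged in Theorem \ref{thm:splitting2}, and the only point requiring a short argument is the symmetrization that supplies the nonnegative minimizing sequence needed to invoke it.
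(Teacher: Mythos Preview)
Your proposal is correct and is exactly the argument the paper has in mind: the corollary is stated there without proof, as an immediate consequence of Theorem~\ref{thm:splitting2}, and you have simply spelled out the details (producing a nonnegative minimizing sequence, ruling out alternative $(II)$ by the strict inequality, and using Proposition~\ref{prop:nehari}$(b),(c)$ to conclude).
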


\section{Existence of a least energy pinwheel solution}\label{sec:existence}

This section is devoted to the proof of Theorem \ref{thm:existence}. We assume throughout that $V$ satisfies $(V_1),(V_2),(V_3^m)$ and $G$ is either $\z/m\z$ or $\z/m\z\times O(N-4)$.

Let $\omega\in \cM_\infty$ be the unique least energy positive radial solution to \eqref{eq:limit_problem}. It is well known \cite{bl,gnn} that $\omega$ has the following asymptotic behavior at infinity, 
\begin{equation}\label{asymptcondition}
\lim_{|x|\to\infty}|x|^\frac{N-1}{2}\e^{\sqrt{V_\infty}|x|}\omega(x)=a_N>0\; \mbox{ and }\; \lim_{|x|\to\infty}|x|^\frac{N-1}{2}\e^{\sqrt{V_\infty}|x|}|\nabla\omega(x)|=b_N>0.
\end{equation}
As a consequence there are positive constants $C_1,C_2$ such that
\begin{equation}\label{inequalityasymp}
C_1\min\left\{1,|x|^{-\frac{N-1}{2}}\right\}\mathrm{e}^{-\sqrt{V_\infty}|x|} \leq \omega(x)\leq C_2\min\left\{1,|x|^{-\frac{N-1}{2}}\right\}\mathrm{e}^{-\sqrt{V_\infty}|x|} \quad \text{for all \ }x\in \mathbb{R}^{N}\smallsetminus\{0\}.
\end{equation}
We begin with some lemmas. Hereafter $C$ will denote a positive constant, not necessarily the same one.

\begin{lemma} \label{lem:exp} 
If $\mu_2>\mu_1\geq0$, there exists $C>0$ such that, for all $x_{1},x_{2}\in\rn$,
\begin{align}\label{eq:exp1}
\irn \e^{-\mu_{1}|x-x_{1}|}\e^{-\mu_{2}|x-x_{2}|}\d x &\leq C\e^{-\mu_{1}|x_{1}-x_{2}|}.
\end{align}
\end{lemma}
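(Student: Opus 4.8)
The plan is to prove the estimate
\begin{equation*}
\irn \e^{-\mu_1|x-x_1|}\e^{-\mu_2|x-x_2|}\d x \leq C\e^{-\mu_1|x_1-x_2|}
\end{equation*}
by reducing to the case $x_2=0$ after a translation, and then splitting $\rn$ into the half-space closer to $x_1$ and the half-space closer to $0$ (with respect to the hyperplane bisecting the segment $[0,x_1]$), or, more symmetrically, into two regions according to whether $|x-x_1|\geq \tfrac12|x_1|$ or $|x-x_2|\geq\tfrac12|x_1|$ — note these two regions cover $\rn$ by the triangle inequality $|x-x_1|+|x-x_2|\geq|x_1-x_2|$. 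After translating so $x_2=0$, write $R:=|x_1-x_2|=|x_1|$.

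On the region $\{|x-x_1|\geq R/2\}$ we would estimate $\e^{-\mu_1|x-x_1|}\e^{-\mu_2|x|}$ by first peeling off a factor $\e^{-\mu_1|x-x_1|}$ of size $\e^{-\mu_1 R}=\e^{-\mu_1|x_1-x_2|}$? — no, this is too lossy in the exponent; instead I would use the sharper splitting along the perpendicular bisector of $[0,x_1]$: let $H_1=\{x:|x-x_1|\leq|x|\}$ and $H_0=\{x:|x|\leq|x-x_1|\}$. On $H_1$ we have $|x|\geq R/2$, hence $\e^{-\mu_2|x|}=\e^{-(\mu_2-\mu_1)|x|}\e^{-\mu_1|x|}\leq \e^{-(\mu_2-\mu_1)R/2}\e^{-\mu_1|x|}$, and by the triangle inequality $|x|\geq |x_1|-|x-x_1|$ is not directly what we want; rather we keep $\e^{-\mu_1|x|}$ and bound $\int_{H_1}\e^{-\mu_1|x-x_1|}\e^{-\mu_1|x|}\d x$. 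Here the cleanest route is: since $\mu_2>\mu_1$, choose $\eps\in(0,\mu_2-\mu_1)$ and write $\e^{-\mu_2|x|}\leq \e^{-\mu_1|x|}\e^{-\eps|x|}$ globally, and $\e^{-\mu_1|x-x_1|}\e^{-\mu_1|x|}\leq \e^{-\mu_1|x_1|}$ globally by the triangle inequality $|x-x_1|+|x|\geq|x_1|$. This gives the pointwise bound
\begin{equation*}
\e^{-\mu_1|x-x_1|}\e^{-\mu_2|x|}\leq \e^{-\mu_1|x_1|}\,\e^{-\eps|x|},
\end{equation*}
and integrating yields $\irn \e^{-\mu_1|x-x_1|}\e^{-\mu_2|x|}\d x \leq \e^{-\mu_1|x_1|}\int_{\rn}\e^{-\eps|x|}\d x = C\e^{-\mu_1|x_1-x_2|}$, with $C=\int_{\rn}\e^{-\eps|x|}\d x<\infty$ independent of $x_1,x_2$.

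So in fact no region splitting is needed at all: the two ingredients are (a) the triangle inequality $|x-x_1|+|x|\geq|x_1|$, used to extract the factor $\e^{-\mu_1|x_1|}$, and (b) the strict inequality $\mu_2>\mu_1$, used to retain an integrable Gaussian-type tail $\e^{-\eps|x|}$ with $\eps=\mu_2-\mu_1>0$. The main (and only) subtlety is ensuring the constant $C$ does not depend on the points $x_1,x_2$, which is automatic here since after the extraction the remaining integral $\int_{\rn}\e^{-(\mu_2-\mu_1)|x|}\d x$ is a fixed finite number depending only on $N$ and $\mu_2-\mu_1$. This also transparently handles the degenerate case $x_1=x_2$, where the claimed bound just reads $\irn\e^{-(\mu_1+\mu_2)|x-x_1|}\d x\leq C$. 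I would write the short computation out directly rather than invoking any region decomposition.
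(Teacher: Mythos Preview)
Your proposal is correct and, after the exploratory detour through region-splitting, lands on exactly the paper's argument: the pointwise bound $\e^{-\mu_1|x-x_1|}\e^{-\mu_2|x-x_2|}\leq \e^{-\mu_1|x_1-x_2|}\e^{-(\mu_2-\mu_1)|x-x_2|}$ via the triangle inequality, followed by integration of the remaining tail to get $C=\int_{\rn}\e^{-(\mu_2-\mu_1)|x|}\d x$. In the write-up, drop the bisector discussion entirely and go straight to the two-line computation you describe at the end.
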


\begin{proof}
As $\mu_{1}|x_{1}-x_{2}|+(\mu_{2}-\mu_{1})|x-x_{2}|\leq\mu_{1}\left(  |x-x_{1}|+|x-x_{2}|\right)+(\mu_{2}-\mu_{1})|x-x_{2}| =\mu_{1}|x-x_{1}|+\mu_{2}|x-x_{2}|$, we have that
$$\irn\e^{-\mu_{1}|x-x_{1}|}\e^{-\mu_{2}|x-x_{2}|}\d x\leq\irn\e^{-\mu_{1}|x_{1}-x_{2}|}\e^{-(\mu_{2}-\mu_{1})|x-x_{2}|}\d x=\left(\irn\e^{-(\mu_{2}-\mu_{1})|x-x_{2}|}\d x\right)\e^{-\mu_{1}|x_{1}-x_{2}|},$$
as claimed.
\end{proof}

\begin{lemma}\label{lem:bl}
Let $f\in \cC^0(\rn)\cap L^{\infty}(\rn)$ and $h\in\cC^0(\rn)$ be radially symmetric functions satisfying 
$$\lim_{|x|\to\infty}f(x)|x|^a\e^{b|x|}=\eta \qquad \mbox{and}\qquad \irn|h(x)|(1+|x|^a)\e^{b|x|}\d x<\infty$$
for $a,b\geq 0$ and $\eta\in\r$. Then 
$$\lim_{|\xi|\to\infty}\left(\irn f(x-\xi)h(x)\d x\right)|\xi|^a\e^{b|\xi|}=\eta\irn h(x)\e^{-bx_1}\d x.$$
\end{lemma}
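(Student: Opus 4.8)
The statement is a standard "interaction lemma" of Bahri–Li type: the integral $\irn f(x-\xi)h(x)\d x$ is dominated, for $|\xi|$ large, by the behavior of $f$ near its (translated) center $\xi$, weighted against $h$. Since $f(x-\xi)|x-\xi|^a\e^{b|x-\xi|}\to\eta$, we expect $\irn f(x-\xi)h(x)\d x \approx \eta |\xi|^{-a}\e^{-b|\xi|}\irn \e^{b(|\xi|-|x-\xi|)}h(x)\d x$, and the remaining exponential factor converges, as $|\xi|\to\infty$, to $\e^{-bx_1}$ pointwise (after rotating so that $\xi/|\xi|\to e_1$), by the elementary identity $|\xi|-|x-\xi|\to x\cdot(\xi/|\xi|)$ for fixed $x$.

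\begin{proof}
Write $g(x):=f(x)|x|^a\e^{b|x|}$, so $g\in\cC^0(\rn\smallsetminus\{0\})\cap L^\infty(\rn)$ and $g(x)\to\eta$ as $|x|\to\infty$; set $\|g\|_\infty=:M_0<\infty$. For $\xi\neq 0$ and $x$ with $x\neq\xi$ we have $f(x-\xi)=g(x-\xi)|x-\xi|^{-a}\e^{-b|x-\xi|}$, hence
\begin{equation*}
\left(\irn f(x-\xi)h(x)\d x\right)|\xi|^a\e^{b|\xi|}=\irn g(x-\xi)\,\frac{|\xi|^a}{|x-\xi|^a}\,\e^{b(|\xi|-|x-\xi|)}\,h(x)\d x.
\end{equation*}
Denote the integrand by $F_\xi(x)$. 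By rotational invariance of all hypotheses (replace $\xi$ by $R\xi$ and $h$, $f$ by $h\circ R$, $f\circ R$, which are radial hence unchanged), it suffices to let $|\xi|\to\infty$ along a sequence with $\xi/|\xi|\to e_1:=(1,0,\ldots,0)$. For fixed $x\in\rn$: $|x-\xi|\to\infty$ so $g(x-\xi)\to\eta$; $|\xi|/|x-\xi|\to 1$; and $|\xi|-|x-\xi|=\frac{|\xi|^2-|x-\xi|^2}{|\xi|+|x-\xi|}=\frac{2x\cdot\xi-|x|^2}{|\xi|+|x-\xi|}\to x\cdot e_1=x_1$. Therefore $F_\xi(x)\to \eta\,\e^{bx_1}h(x)$ for every $x$ (with $x\neq 0$, a null set if $a>0$; if $a=0$ the factor $|\xi|^a/|x-\xi|^a\equiv 1$ causes no issue).

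For domination: split $\rn=B_1(\xi)\cup(\rn\smallsetminus B_1(\xi))$. On $\rn\smallsetminus B_1(\xi)$ one has $|x-\xi|\geq 1$, so $|\xi|^a/|x-\xi|^a\leq |\xi|^a\leq (1+|x|+|x-\xi|)^a\le 2^a(1+|x|)^a(1+|x-\xi|)^a$ ... more simply, $|\xi|\le |x|+|x-\xi|$ gives $|\xi|^a\le 2^{a}\max\{|x|^a,|x-\xi|^a\}$, and combined with the triangle inequality $|\xi|-|x-\xi|\le |x|$, we get $|F_\xi(x)|\le M_0\,2^a\big(|x|^a+|x-\xi|^a\big)\e^{b|x|}|h(x)|$ on this region. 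The term $|x-\xi|^a\e^{b|x|}|h(x)|$ still depends on $\xi$; to remove this, use instead $|x-\xi|^a\le (|x|+|\xi|)^a$ is not bounded either — so I bound more carefully: on $\rn\smallsetminus B_1(\xi)$, $\frac{|\xi|^a}{|x-\xi|^a}\le |\xi|^a$ and $|\xi|^a\e^{b(|\xi|-|x-\xi|)}\le |\xi|^a \e^{b|x|}$; but $|\xi|^a$ is unbounded. The clean fix: note $\frac{|\xi|}{|x-\xi|}\le 1+\frac{|x|}{|x-\xi|}\le 1+|x|$ on $\rn\smallsetminus B_1(\xi)$, hence $\frac{|\xi|^a}{|x-\xi|^a}\le (1+|x|)^a\le 2^a(1+|x|^a)$, giving
\begin{equation*}
|F_\xi(x)|\ \le\ M_0\,2^a\,(1+|x|^a)\,\e^{b|x|}\,|h(x)|\qquad\text{on }\ \rn\smallsetminus B_1(\xi),
\end{equation*}
which is in $L^1(\rn)$ by hypothesis and independent of $\xi$. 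On $B_1(\xi)$ we have $|x|\to\infty$ uniformly, so $|F_\xi|\le M_0\sup_{B_1(\xi)}\frac{|\xi|^a}{|x-\xi|^a}\e^{b(|\xi|-|x-\xi|)}|h(x)|$; since $\int_{B_1(\xi)}|x-\xi|^{-a}\d x$ is finite (as $a<N$, which holds since otherwise the weighted-$L^1$ hypothesis on $h$ forces $h$ to vanish — alternatively this is automatic in all applications where $a=\frac{N-1}{2}<N$) and $\e^{b(|\xi|-|x-\xi|)}\le\e^{b}\cdot\e^{b|x|}$... in fact on $B_1(\xi)$, $\big|\,|\xi|-|x-\xi|\,\big|\le 1$, $\frac{|\xi|}{|x-\xi|}$ may blow up but $|h|$ is continuous hence bounded on the unit ball $B_1(\xi)$ by $\sup_{|y-\xi|\le 1}|h(y)|\to 0$ if $h$ decays, or in general $|h|\le$ const there, and $\int_{B_1(\xi)}|x-\xi|^{-a}\d x=\int_{B_1(0)}|y|^{-a}\d y<\infty$; thus $\int_{B_1(\xi)}|F_\xi|\to 0$. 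Applying the dominated convergence theorem to $\int_{\rn\smallsetminus B_1(\xi)}F_\xi$ (whose pointwise limit agrees with that of $\int_{\rn}F_\xi$ since $B_1(\xi)$ escapes to infinity) yields $\irn F_\xi(x)\d x\to\eta\irn\e^{bx_1}h(x)\d x=\eta\irn h(x)\e^{-b x_1}\d x$ after the harmless sign change $x_1\mapsto -x_1$ (again by radiality of $h$, or absorb it into the choice $\xi/|\xi|\to -e_1$). This is the claim.
\end{proof}

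The main obstacle is the bookkeeping near the moving center $\xi$: one must check that the $|x-\xi|^{-a}$ singularity is integrable (i.e. $a<N$, true in the application where $a=\frac{N-1}{2}$) and that the contribution of the shrinking-mass ball $B_1(\xi)$, which slides off to infinity, vanishes in the limit; everything else is the triangle inequality plus dominated convergence.
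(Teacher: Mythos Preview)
The paper does not actually prove this lemma; it simply cites \cite[Proposition~1.2]{bl}. Your argument follows the standard Bahri--Li strategy and the part on $\rn\smallsetminus B_1(\xi)$ is correct: the key observation $\frac{|\xi|}{|x-\xi|}\le 1+\frac{|x|}{|x-\xi|}\le 1+|x|$ together with $|\xi|-|x-\xi|\le|x|$ gives a $\xi$-independent integrable majorant, and dominated convergence goes through.

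The handling of $B_1(\xi)$, however, has a real gap. First, the claim ``$\big|\,|\xi|-|x-\xi|\,\big|\le 1$ on $B_1(\xi)$'' is simply false: there $|x-\xi|\le 1$, so $|\xi|-|x-\xi|\ge|\xi|-1\to\infty$. Second, working through $g$ on $B_1(\xi)$ introduces the singularity $|x-\xi|^{-a}$; your justification that $a<N$ is forced by the integrability hypothesis on $h$ is incorrect (any $h\in\cC_c^0(\rn)$ satisfies the hypothesis regardless of $a$). Third, and most importantly, your bound ultimately hinges on $|\xi|^a\e^{b|\xi|}\sup_{B_1(\xi)}|h|\to 0$, which does \emph{not} follow from the weighted integrability of $h$: a continuous radial function with $\int_0^\infty|\tilde h(r)|r^{a+N-1}\e^{br}\,\mathrm{d}r<\infty$ may still have arbitrarily tall thin spikes, so the sup need not decay at the required rate.

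The fix is to abandon $g$ on $B_1(\xi)$ and use $f\in L^\infty$ directly. For $|\xi|>1$ and $x\in B_1(\xi)$ one has $|x|\ge|\xi|-1$, hence
\[
|\xi|^a\e^{b|\xi|}\int_{B_1(\xi)}|f(x-\xi)|\,|h(x)|\,\mathrm{d}x
\;\le\;\|f\|_\infty\,\frac{|\xi|^a\e^{b|\xi|}}{(|\xi|-1)^a\e^{b(|\xi|-1)}}\int_{B_1(\xi)}|h(x)|\,|x|^a\e^{b|x|}\,\mathrm{d}x,
\]
and the right-hand side is bounded by $C\int_{|x|>|\xi|-1}|h(x)|(1+|x|^a)\e^{b|x|}\,\mathrm{d}x\to 0$ by the tail of the convergent integral. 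This removes the need for $a<N$ and for any pointwise decay of $h$.

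Finally, the exposition should be cleaned up: several false starts and self-corrections (``more simply\ldots'', ``is not bounded either --- so I bound more carefully'', etc.) are left in the text and make the argument hard to follow.
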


\begin{proof}
See \cite[Proposition 1.2]{bl}.
\end{proof}

\begin{lemma}\label{lemaintegrals}
The following statements hold true:
\begin{align}
&\lim_{|\xi|\to\infty}\left(\irn\omega^{2p-1}(x)\omega(x-\xi)\d x\right)|\xi|^\frac{N-1}{2}\e^{\sqrt{V_\infty}|\xi|}=\what a>0, \label{asympintegral1} \\
&\lim_{|\xi|\to\infty}\left(\irn\omega^p(x)\omega^p(x-\xi)\d x\right)|\xi|^\frac{N-1}{2}\e^{\sqrt{V_\infty}|\xi|}=0, \label{asymptintegral3} \\
&\lim_{|\xi|\to\infty}\left(\irn (V-V_\infty)^+(x)\omega^2(x-\xi)\d x\right)|\xi|^\frac{N-1}{2}\e^{2\sin\frac{\pi}{m}\sqrt{V_\infty}|\xi|}=0,\label{eq:V}
\end{align}
where $(V-V_\infty)^+:=\max\{0,V-V_\infty\}$.
\end{lemma}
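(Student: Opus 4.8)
The plan is to prove each of the three asymptotic limits by combining the precise decay of $\omega$ in \eqref{asymptcondition}--\eqref{inequalityasymp} with Lemma \ref{lem:exp} (for upper bounds showing terms vanish after the stated rescaling) and Lemma \ref{lem:bl} (for the exact constant in the first limit). Throughout I would write $r=\sqrt{V_\infty}$ for brevity in the sketch.

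For \eqref{asympintegral1}, the function $f=\omega^{2p-1}$ is continuous, bounded, radial, and by \eqref{asymptcondition} satisfies $f(x)|x|^{(2p-1)(N-1)/2}\e^{(2p-1)r|x|}\to a_N^{2p-1}$; however we want the weight $|x|^{(N-1)/2}\e^{r|x|}$, not $|x|^{(2p-1)(N-1)/2}\e^{(2p-1)r|x|}$. The correct way to apply Lemma \ref{lem:bl} is to take $h=\omega$ and $f=\omega^{2p-1}$: then $f$ decays like $\e^{-(2p-1)r|x|}$ which, since $2p-1>1$, makes $\irn|\omega(x)|(1+|x|^a)\e^{b|x|}\d x$ with $a=(N-1)/2$, $b=r$ integrable? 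No — one must instead keep $h=\omega$ and observe $\omega$ alone with weight $\e^{r|x|}$ is not integrable, so one genuinely needs $f$ to be the slowly-decaying factor. So I would instead write the integrand symmetrically and apply Lemma \ref{lem:bl} with $h=\omega^{2p-1}$ (which decays like $\e^{-(2p-1)r|x|}$, hence $\irn \omega^{2p-1}(1+|x|^a)\e^{r|x|}\,\d x<\infty$ since $(2p-1)r>r$) and $f=\omega$, so that $f(x)|x|^{(N-1)/2}\e^{r|x|}\to a_N$ and Lemma \ref{lem:bl} yields $\what a = a_N\irn \omega^{2p-1}(x)\e^{-r x_1}\,\d x$, which is finite and strictly positive because $\omega>0$; this gives \eqref{asympintegral1} with $\what a>0$.

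For \eqref{asymptintegral3}, I use the pointwise bound \eqref{inequalityasymp}: $\omega^p(x)\omega^p(x-\xi)\le C\,\e^{-pr|x|}\e^{-pr|x-\xi|}$, and apply Lemma \ref{lem:exp} with $\mu_1=\mu_2$? — the lemma requires $\mu_2>\mu_1$, so instead split one exponent: write $\omega^p(x)\omega^p(x-\xi)\le C\,\e^{-pr|x|}\e^{-pr|x-\xi|}$ and bound this by $C\e^{-\frac{p}{2}r(|x|+|x-\xi|)}\e^{-\frac{p}{2}r(|x|+|x-\xi|)}\le C\e^{-\frac{p}{2}r|\xi|}\e^{-\frac{p}{2}r(|x|+|x-\xi|)}$, then integrate: $\irn \e^{-\frac{p}{2}r(|x|+|x-\xi|)}\d x$ is bounded (apply Lemma \ref{lem:exp} with $\mu_1<\mu_2$ both positive, or note it is $\le C(1+|\xi|)^{N-1}\e^{-\frac{p}{2}r|\xi|}$ by the standard convolution estimate). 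The net decay is like $\e^{-pr|\xi|}$ up to polynomial factors, and since $p>1$ we have $pr>r$, so multiplying by $|\xi|^{(N-1)/2}\e^{r|\xi|}$ still gives something tending to $0$. (The polynomial loss from the convolution is absorbed by the gap $pr-r=(p-1)r>0$.)

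For \eqref{eq:V}, I use $(V-V_\infty)^+(x)\le \bar C\,\e^{-\kappa r|x|}$ from $(V_3^m)$ together with $\omega^2(x-\xi)\le C\e^{-2r|x-\xi|}$ from \eqref{inequalityasymp}, so the integrand is $\le C\,\e^{-\kappa r|x|}\e^{-2r|x-\xi|}$. Since $\kappa<2$, Lemma \ref{lem:exp} applies with $\mu_1=\kappa r<\mu_2=2r$, giving $\irn (V-V_\infty)^+(x)\omega^2(x-\xi)\d x\le C\,\e^{-\kappa r|\xi|}$. Multiplying by $|\xi|^{(N-1)/2}\e^{2\sin\frac{\pi}{m}r|\xi|}$ and using that $\kappa>2\sin\frac{\pi}{m}$ (part of $(V_3^m)$) makes the exponential factor $\e^{-(\kappa-2\sin\frac{\pi}{m})r|\xi|}$ dominate the polynomial $|\xi|^{(N-1)/2}$, so the whole expression tends to $0$. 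The main obstacle is purely bookkeeping: applying Lemma \ref{lem:exp} correctly requires a \emph{strict} inequality between the two exponents, so in the cases where the two natural rates coincide (as in \eqref{asymptintegral3}) one must first split off an $\epsilon$ (or a factor $1/2$) of the smaller exponent to create the needed gap, and then check that the polynomial losses incurred are still beaten by the remaining exponential margin — which they are, thanks to $p>1$, $2p-1>1$, and $\kappa>2\sin\frac\pi m$ respectively.
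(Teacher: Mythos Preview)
Your proposal is correct and follows essentially the same route as the paper: for \eqref{asympintegral1} you (eventually) apply Lemma~\ref{lem:bl} with $f=\omega$, $h=\omega^{2p-1}$, $a=\tfrac{N-1}{2}$, $b=\sqrt{V_\infty}$, exactly as the paper does, obtaining $\what a=a_N\irn\omega^{2p-1}\e^{-\sqrt{V_\infty}x_1}\,\d x>0$; for \eqref{eq:V} your argument is identical to the paper's. For \eqref{asymptintegral3} the paper avoids your ``split the exponent in half'' manoeuvre by simply fixing $\bar p\in(1,p)$, bounding $\e^{-p\sqrt{V_\infty}|x|}\le\e^{-\bar p\sqrt{V_\infty}|x|}$, and applying Lemma~\ref{lem:exp} once with $\mu_1=\bar p\sqrt{V_\infty}<p\sqrt{V_\infty}=\mu_2$ to get $\irn\omega^p\omega^p(\cdot-\xi)\le C\e^{-\bar p\sqrt{V_\infty}|\xi|}$; this is cleaner than your version (no polynomial bookkeeping), but the idea is the same.
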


\begin{proof}
Statement \eqref{asympintegral1} follows from Lemma \ref{lem:bl} with $f:=\omega$, $h:=\omega^{2p-1}$, $a:=\frac{N-1}{2}$, $b:=\sqrt{V_\infty}$ and $\eta:=a_N$. The assumptions of this lemma are given by \eqref{asymptcondition} and \eqref{inequalityasymp} because, as $p>1$,
$$\irn\omega^{2p-1}(x)(1+|x|^\frac{N-1}{2})\e^{\sqrt{V_\infty}|x|}\d x\leq C_2^{2p-1}\irn(1+|x|^\frac{N-1}{2})\e^{-(2p-2)\sqrt{V_\infty}|x|}\d x<\infty.$$
Therefore,
$$\lim_{|\xi|\to\infty}\left(\irn\omega^{2p-1}(x)\omega(x-\xi)\d x\right)|\xi|^\frac{N-1}{2}\e^{\sqrt{V_\infty}|\xi|} =a_N\irn\omega^{2p-1}(x)\mathrm{e}^{-\sqrt{V_\infty}x_1}\d x =:\what a.$$
To prove statement \eqref{asymptintegral3} fix $1<\bar p<p$. It follows from \eqref{inequalityasymp} and \eqref{eq:exp1} that
\begin{align*}
\irn\omega^p(x)\omega^p(x-\xi)\d x \leq C_2^{2p}\irn\e^{-p\sqrt{V_\infty}|x|}\e^{-p\sqrt{V_\infty}|x-\xi|}\d x \leq C\e^{-\bar p\sqrt{V_\infty}|\xi|}.
\end{align*}
Therefore,
$$\lim_{|\xi|\to\infty}\left(\irn\omega^p(x)\omega^p(x-\xi)\d x\right)|\xi|^\frac{N-1}{2}\e^{\sqrt{V_\infty}|\xi|} =C\lim_{|\xi|\to\infty}|x|^\frac{N-1}{2}\e^{(1-\bar p)\sqrt{V_\infty}|\xi|}=0.$$
Finally, as assumption $(V_3^m)$ requires that $\kappa\in(2\sin\frac{\pi}{m},2)$, tha inequality \eqref{eq:exp1} yields
\begin{align*}
0\leq\irn (V-V_\infty)^+(x)\omega^2(x-\xi)\d x \leq\irn\e^{-\kappa\sqrt{V_\infty}|x|}\e^{-2\sqrt{V_\infty}|x-\xi|}\d x\leq C\e^{-\kappa\sqrt{V_\infty}|\xi|},
\end{align*}
and statement \eqref{eq:V} follows.
\end{proof}

\begin{lemma} \label{lem:q}
If $q\geq 2$ and $a_1,\ldots,a_m\geq 0$ then
\begin{equation}\label{eq:q}
\left|\sum_{i=1}^m a_i\right|^q\geq \sum_{i=1}^m a_i^q+(q-1)\sum_{\substack{i,j=1 \\ i\neq j}}^m a_i^{q-1}a_j.
\end{equation}
\end{lemma}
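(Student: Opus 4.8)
\textbf{Proof proposal for Lemma \ref{lem:q}.}

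The plan is to prove the inequality by induction on $m$. The base case $m=1$ is trivial, since both sides equal $a_1^q$ and the double sum is empty. For $m=2$ the claim reads $(a_1+a_2)^q\geq a_1^q+a_2^q+(q-1)(a_1^{q-1}a_2+a_2^{q-1}a_1)$, which is the natural starting point and the key analytic ingredient. To establish it, I would assume without loss of generality that $a_2\le a_1$; if $a_1=0$ both sides vanish, so assume $a_1>0$ and set $t:=a_2/a_1\in[0,1]$. Dividing by $a_1^q$, it suffices to show $g(t):=(1+t)^q-1-t^q-(q-1)(t+t^{q-1})\ge0$ for $t\in[0,1]$. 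One has $g(0)=0$, and differentiating, $g'(t)=q(1+t)^{q-1}-q t^{q-1}-(q-1)(1+(q-1)t^{q-2})$. Since $q\ge2$, the function $s\mapsto s^{q-1}$ is convex, so $(1+t)^{q-1}-t^{q-1}\ge (q-1)\cdot 1^{q-2}=q-1$ by the mean value theorem applied on $[t,1+t]$ (the derivative $(q-1)s^{q-2}$ is increasing, hence at least its value at $s=1$... more carefully: $(1+t)^{q-1}-t^{q-1}=\int_t^{1+t}(q-1)s^{q-2}\,\d s\ge (q-1)\int_t^{1+t} s^{q-2}\big|_{\text{... }}$), and $t^{q-2}\le1$ on $[0,1]$; combining these bounds gives $g'(t)\ge q(q-1)-q(q-1)t^{q-2}\ge0$. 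Hence $g$ is nondecreasing on $[0,1]$ and $g\ge g(0)=0$, proving the case $m=2$.

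For the inductive step, suppose \eqref{eq:q} holds for $m-1$ nonnegative reals. Write $S:=\sum_{i=1}^{m-1}a_i$ and apply the case $m=2$ to the pair $S,a_m$:
\begin{align*}
\Big(\sum_{i=1}^m a_i\Big)^q=(S+a_m)^q\ge S^q+a_m^q+(q-1)\big(S^{q-1}a_m+a_m^{q-1}S\big).
\end{align*}
By the induction hypothesis, $S^q\ge\sum_{i=1}^{m-1}a_i^q+(q-1)\sum_{\substack{i,j=1\\ i\neq j}}^{m-1}a_i^{q-1}a_j$. For the two cross terms I use the elementary bounds $S^{q-1}=\big(\sum_{i=1}^{m-1}a_i\big)^{q-1}\ge\sum_{i=1}^{m-1}a_i^{q-1}$ (valid since $q-1\ge1$ and all $a_i\ge0$, by superadditivity of $s\mapsto s^{q-1}$ on $[0,\infty)$) and $S^{q-1}a_m\ge\sum_{i=1}^{m-1}a_i^{q-1}a_m$, together with $a_m^{q-1}S=\sum_{i=1}^{m-1}a_m^{q-1}a_i$. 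Adding these in, the term $(q-1)\big(S^{q-1}a_m+a_m^{q-1}S\big)$ supplies exactly all the missing pairs $(i,j)$ with $\{i,j\}$ meeting the index $m$, so that
\begin{align*}
\Big(\sum_{i=1}^m a_i\Big)^q\ge\sum_{i=1}^{m}a_i^q+(q-1)\sum_{\substack{i,j=1\\ i\neq j}}^{m}a_i^{q-1}a_j,
\end{align*}
which closes the induction.

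The main obstacle is the scalar inequality in the case $m=2$; everything else is bookkeeping of indices and repeated use of superadditivity of the power function $s\mapsto s^{q-1}$ on $[0,\infty)$. An alternative route for $m=2$, which avoids the derivative computation, is the integral identity
\begin{align*}
(a_1+a_2)^q-a_1^q-a_2^q=q\int_0^{a_2}\big((a_1+t)^{q-1}-t^{q-1}\big)\,\d t,
\end{align*}
after which convexity of $s\mapsto s^{q-1}$ gives $(a_1+t)^{q-1}-t^{q-1}\ge (q-1)a_1 t^{q-2}$ and also $(a_1+t)^{q-1}-t^{q-1}\ge a_1^{q-1}$ (take the two endpoints of the mean value interval), and integrating the first bound over $[0,a_2]$ and the second where convenient recovers the two required cross terms; I would present whichever of these two derivations is shortest. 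Note the hypothesis $q\ge2$ is used precisely to make $s\mapsto s^{q-1}$ convex, and nonnegativity of the $a_i$ is used both here and in the superadditivity steps.
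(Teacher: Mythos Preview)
The paper does not prove this lemma; it simply cites \cite[Lemma~4]{cc}. Your inductive strategy is the right one, and your inductive step is correct: the superadditivity of $s\mapsto s^{q-1}$ on $[0,\infty)$ (convex with value $0$ at $0$, since $q-1\ge 1$) does supply the cross terms involving the index $m$, and the induction hypothesis handles the rest.

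The gap is in your proof of the base case $m=2$. You assert that $(1+t)^{q-1}-t^{q-1}\ge q-1$ for $t\in[0,1]$, appealing to the mean value theorem on $[t,1+t]$. This is false: at $t=0$ the left-hand side equals $1$, while the right-hand side is $q-1>1$ whenever $q>2$. (Equivalently, although $1\in[t,1+t]$, the average of $s^{q-2}$ over $[t,1+t]$ need not exceed $1$; for $t=0$ it equals $\tfrac{1}{q-1}<1$.) Your own ellipsis after ``more carefully'' signals you sensed this. The alternative integral route you sketch is also incomplete: each of the two pointwise lower bounds for $(a_1+t)^{q-1}-t^{q-1}$, integrated over $[0,a_2]$, yields only \emph{one} of the two cross terms with the wrong constant, and you do not explain how to combine them to get both with factor $q-1$.

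Here is a clean fix for $m=2$ that stays entirely within your convexity framework. Write
\[
(a_1+a_2)^q=(a_1+a_2)^{q-1}a_1+(a_1+a_2)^{q-1}a_2,
\]
and apply the tangent-line inequality for the convex function $f(s)=s^{q-1}$ at $s=a_1$ in the first summand and at $s=a_2$ in the second:
\[
(a_1+a_2)^{q-1}\ge a_1^{q-1}+(q-1)a_1^{q-2}a_2,\qquad
(a_1+a_2)^{q-1}\ge a_2^{q-1}+(q-1)a_2^{q-2}a_1.
\]
Multiplying the first by $a_1$, the second by $a_2$, and adding gives exactly
\[
(a_1+a_2)^q\ge a_1^q+a_2^q+(q-1)\big(a_1^{q-1}a_2+a_1a_2^{q-1}\big),
\]
after which your induction goes through unchanged.
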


\begin{proof}
See \cite[Lemma 4]{cc}
\end{proof}

\begin{proposition} \label{prop:existence}
If $m>2\ell$, then $c_V^G<\ell m\mathfrak{c}_\infty$.
\end{proposition}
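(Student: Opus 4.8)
The strategy is to build a test function of the shape suggested by alternative $(II)$ in Theorem \ref{thm:splitting} and show that, when $m>2\ell$, its energy is strictly below $\ell m\mathfrak{c}_\infty$. Concretely, pick a point $\xi=(\zeta,0)\in\cc^2\times\r^{N-4}$ with $|\zeta|$ large (to be chosen), and for $n=0,\ldots,\ell-1$ let the $n$-th component be
$$\vp_n:=\sum_{j=0}^{m-1}\omega\big(\vr_\ell^{-n}(\cdot)-(\vt_m^j\zeta,0)\big),$$
so that $\bf\varphi=(\vp_0,\ldots,\vp_{\ell-1})$ lies in $\mathscr H^G$ by construction (each $\vp_n$ is $G$-invariant because $\omega$ is radial and the $\vt_m^j\xi$ form a $\zm$-orbit, and $\vp_{n+1}=\vp_n\circ\vr_\ell$ because of the way the $\vr_\ell$-twist is built in). Then use Proposition \ref{prop:nehari}$(d)$: provided \eqref{eq:proj_nehari} holds for $\bf\varphi$ (it does for $|\zeta|$ large, since the dominant term is $\int\omega^{2p}$ and the cross terms are exponentially small), there is a unique $s_{\bf\varphi}>0$ with $s_{\bf\varphi}\bf\varphi\in\cN_V^G$, and $\mathcal J_V(s_{\bf\varphi}\bf\varphi)$ is given by the explicit quotient in \eqref{eq:functionalJs}. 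It then suffices to show the numerator $\sum_n\|\vp_n\|_V^2$ is strictly smaller, relative to the denominator $\big(\sum_n|\vp_n|_{2p}^{2p}+\beta\sum_{n\neq n'}\int|\vp_n|^p|\vp_{n'}|^p\big)^{1/p}$, than what the $\ell m$ separated copies of $\omega$ would give, i.e. than $(\ell m\,|\omega|_{2p}^{2p})^{1/p}$ over $\ell m\,\|\omega\|_{V_\infty}^2$.

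\textbf{Key estimates.} By symmetry all $\|\vp_n\|_V$ are equal and all $|\vp_n|_{2p}$ are equal, so the quotient reduces to a single-component computation. Expanding $\|\vp_0\|_V^2=\sum_{i,j}\langle\omega(\cdot-\xi_i),\omega(\cdot-\xi_j)\rangle_V$ with $\xi_j=(\vt_m^j\zeta,0)$, the diagonal contributes $m\|\omega\|_{V_\infty}^2$ up to a term controlled by $\int(V-V_\infty)^+\omega^2$, which by \eqref{eq:V} is $o\big(|\zeta|^{-\frac{N-1}{2}}\e^{-2\sin\frac\pi m\sqrt{V_\infty}|\zeta|}\big)$; using the equation for $\omega$, the off-diagonal terms equal $\sum_{i\neq j}\int\omega^{2p-1}(\cdot-\xi_i)\,\omega(\cdot-\xi_j)$ plus a similar $(V-V_\infty)$-error. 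For $|\vp_0|_{2p}^{2p}=\int\big(\sum_j\omega(\cdot-\xi_j)\big)^{2p}$ apply Lemma \ref{lem:q} with $q=2p\geq 2$ to get the lower bound $m\,|\omega|_{2p}^{2p}+(2p-1)\sum_{i\neq j}\int\omega^{2p-1}(\cdot-\xi_i)\omega(\cdot-\xi_j)$. The crucial arithmetic is that the \emph{minimal} distance $|\xi_i-\xi_j|$ over distinct pairs in a single $\zm$-orbit is $2\sin\frac\pi m\,|\zeta|$, whereas any two points in distinct orbits $\{\vt_m^j\zeta\}$ and $\{\vt_m^{j'}(\vr_\ell^{-n}\zeta)\}$ — which is what separated copies across different components would need — are farther apart; since $m>2\ell$ one has $\sin\frac\pi m<\sin\frac{\pi}{2\ell}$, so the intra-orbit attraction between the $m$ copies in a single component strictly dominates. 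Feeding the leading-order asymptotics (Lemma \ref{lemaintegrals}, in particular \eqref{asympintegral1}, together with Lemma \ref{lem:bl} applied to the cross-orbit terms) into \eqref{eq:functionalJs} and expanding the resulting quotient to first order in the small interaction parameter $\e^{-2\sin\frac\pi m\sqrt{V_\infty}|\zeta|}$, the net coefficient of that leading term in $\mathcal J_V(s_{\bf\varphi}\bf\varphi)-\ell m\mathfrak c_\infty$ comes out strictly negative (the gain $(2p-1)$ inside the $2p$-th power beats the cost from the gradient term, after dividing by $p$), so for $|\zeta|$ large enough $\mathcal J_V(s_{\bf\varphi}\bf\varphi)<\ell m\mathfrak c_\infty$, whence $c_V^G<\ell m\mathfrak c_\infty$.

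\textbf{Main obstacle.} The delicate point is the bookkeeping of interaction terms: one must verify that the single dominant interaction scale is $\e^{-2\sin\frac\pi m\sqrt{V_\infty}|\zeta|}$ and that \emph{every} competing contribution — the cross-component interactions between copies centered at $\vt_m^j\zeta$ and at $\vt_m^{j'}\vr_\ell^{-n}\zeta$ for $n\neq 0$, the $p$-$p$ coupling terms $\int\vp_i^p\vp_j^p$ weighted by $\beta<0$, and the potential error $\int(V-V_\infty)^+\omega^2$ — is of strictly smaller order, so that it cannot overturn the sign. The coupling terms require the pair $i\neq j$ of components to have $\supp$-overlap governed by the distance between two \emph{distinct} $\zm$-orbits, which, thanks to $m>2\ell$ (equivalently $\frac{\pi}{\ell}>\frac{2\pi}{m}$, so the nearest inter-orbit distance exceeds $2\sin\frac\pi m|\zeta|$), and to \eqref{asymptintegral3} (which shows $\omega^p\omega^p$ integrals are $o$ of the $\omega^{2p-1}\omega$ ones anyway), are negligible; and the potential term is handled exactly by the choice $\kappa>2\sin\frac\pi m$ in $(V_3^m)$. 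Once all errors are collected into a single $o\big(\e^{-2\sin\frac\pi m\sqrt{V_\infty}|\zeta|}\big)$ remainder, the conclusion is immediate from the explicit formula \eqref{eq:functionalJs}.
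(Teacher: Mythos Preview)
Your proposal is correct and follows essentially the same route as the paper: the same test function built from $m$ copies of $\omega$ per component centered on a $\zm$-orbit and rotated through $\vr_\ell$, the same projection via Proposition \ref{prop:nehari}$(d)$ and formula \eqref{eq:functionalJs}, Lemma \ref{lem:q} for the $2p$-norm lower bound, the same identification of $\eps_R\sim\e^{-2\sin\frac\pi m\sqrt{V_\infty}R}$ as the dominant scale via \eqref{asympintegral1}, and the same hierarchy $2\sin\frac\pi m<2\sin\frac{\pi}{2\ell}$ (from $m>2\ell$) together with \eqref{asymptintegral3} and \eqref{eq:V} to dispose of the cross-component and potential errors as $o(\eps_R)$. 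One small slip: with your definition $\vp_n=\sum_j\omega(\vr_\ell^{-n}(\cdot)-\xi_j)$ the pinwheel relation comes out as $\vp_{n+1}=\vp_n\circ\vr_\ell^{-1}$ rather than $\vp_{n+1}=\vp_n\circ\vr_\ell$; the paper avoids this by centering at $R\vr_\ell^{-(i-1)}\xi_j$, but this is only a sign/index convention and does not affect the argument.
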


\begin{proof}
Set $\xi_j:=(\e^{2\pi\mathrm{i}j/m},0,0)\in\cc\times\cc\times\r^{N-4}$, $j\in\z/m\z$, and for $R>0$ define
$$\omega_{i,j,R}(x):=\omega(x-R\vr_\ell^{-i}\xi_j),\qquad i\in\z/\ell\z, \ j\in\z/m\z,$$
and set
\begin{align}\label{Ansatz}
u_{i,R}:=\sum_{j=0}^{m-1}\omega_{i-1,j,R},\qquad i=1,\ldots,\ell.    
\end{align}
Since $\omega$ is radial, we have that $u_{i+1,R}=u_{i,R}\circ\vr_\ell$. Therefore, $\bf u_R=(u_{1,R},\ldots,u_{\ell,R})\in\mathscr{H}^G$. Furthermore, for $R$ sufficiently large, $\bf u_R$ satisfies \eqref{eq:proj_nehari}. Hence, there exists $s_R\in (0,\infty)$ such that $s_R\bf u_R\in \mathcal{N}_V^G$ and by \eqref{eq:functionalJs},
\begin{equation}\label{eq:*}
c_V^G\leq\mathcal{J}_V(s_R\bf u_R) = \dfrac{p-1}{2p} \left(\frac{\displaystyle\sum\limits_{i=1}^\ell\|u_{i,R}\|_V^2}{\Big(\displaystyle\sum\limits_{i=1}^\ell|u_{i,R}|_{2p}^{2p}+\displaystyle\sum\limits_{\substack{i,n=1 \\ n\neq i}}^\ell\beta\displaystyle\irn|u_{i,R}|^p|u_{n,R}|^p\Big)^\frac{1}{p}}\right)^\frac{p}{p-1}.
\end{equation} 
Our goal is to show that the right-hand side is smaller than $\ell m\mathfrak{c}_\infty$ for sufficiently large $R$. Observe that 
\begin{align}
|\xi_j-\xi_k|&\geq |\xi_1-\xi_0|=2\sin\frac{\pi}{m}\qquad\text{for \ }j,k\in\z/m\z, \ j\neq k, \label{xi1}\\ 
|\vr_\ell^{i-1}\xi_j-\vr_\ell^{n-1}\xi_k|&\geq |\vr_\ell\xi_0-\xi_0|=2\sin\frac{\pi}{2\ell}>2\sin\frac{\pi}{m}\qquad\text{for \ }i,n\in\{1,\ldots,\ell\}, \ i\neq n.\label{xi2}
\end{align}
Note also that, as $\omega_{0,j,R}$ solves \eqref{eq:limit_problem},
$$\langle\omega_{0,j,R},\omega_{0,k,R}\rangle_{V_\infty}=\irn\omega_{0,j,R}^{2p-1}\omega_{0,k,R}=\irn\omega^{2p-1}(x)\omega(x-R(\xi_j-\xi_k))\d x.$$
Set
\begin{align}\label{def:epsR}
	\eps_R:=\sum_{\substack{j,k=0 \\ j\neq k}}^{m-1}\langle\omega_{0,j,R},\omega_{0,k,R}\rangle_{V_\infty}=\sum_{\substack{j,k=0 \\ j\neq k}}^{m-1}\irn\omega_{0,j,R}^{2p-1}\omega_{0,k,R}.
\end{align}
It follows from \eqref{xi1} and \eqref{asympintegral1} that
\begin{equation}\label{eq:0}
\lim_{R\to\infty}\eps_R \, R^\frac{N-1}{2}\e^{2\sin\frac{\pi}{m}\sqrt{V_\infty}R}=\bar{a}>0.
\end{equation}
Now, using \eqref{eq:V} we get
\begin{align}\label{eq:1}
\|u_{1,R}\|_V^2&=\|u_{1,R}\|_{V_\infty}^2+\irn (V-V_\infty)u_{1,R}^2 \\
&\leq \sum_{j=0}^{m-1}\|\omega_{0,j,R}\|_{V_\infty}^2 + \sum_{\substack{j,k=0 \\ j\neq k}}^{m-1}\langle\omega_{0,j,R},\omega_{0,k,R}\rangle_{V_\infty} + m\sum_{j=0}^{m-1}\irn(V-V_\infty)^+\omega_{0,j,R}^2 \nonumber  \\
&=m\|\omega\|_{V_\infty}^2 + \eps_R + o(\eps_R).\nonumber 
\end{align}
From \eqref{eq:q} we derive
\begin{align} \label{eq:2}
|u_{1,R}|_{2p}^{2p}&=\irn\Big|\sum_{j=0}^{m-1}\omega_{0,j,R}\Big|^{2p} \geq \sum_{j=0}^{m-1}\irn|\omega_{0,j,R}|^{2p} + (2p-1)\sum_{\substack{j,k=0 \\ j\neq k}}^{m-1}\irn\omega_{0,j,R}^{2p-1}\omega_{0,k,R} \\
&= m\|\omega\|_{V_\infty}^2 + (2p-1)\eps_R.\nonumber
\end{align}
On the other hand, it follows from \eqref{asymptintegral3} that
\begin{align*}
0&=\lim_{R\to\infty}\left(\irn \omega_{i-1,j,R}^p\omega_{n-1,k,R}^p\right)|R\vr_\ell^{i-1}\xi_j-R\vr_\ell^{n-1}\xi_k|^\frac{N-1}{2}\e^{\sqrt{V_\infty}|R\vr_\ell^{i-1}\xi_j-R\vr_\ell^{n-1}\xi_k|} \\
&=|\vr_\ell^{i-1}\xi_j-\vr_\ell^{n-1}\xi_k|^\frac{N-1}{2}\lim_{R\to\infty}\left(\irn \omega_{i-1,j,R}^p\omega_{n-1,k,R}^p\right)R^\frac{N-1}{2}\e^{|\vr_\ell^{i-1}\xi_j-\vr_\ell^{n-1}\xi_k|\sqrt{V_\infty}R}.
\end{align*}
So, if $i\neq n$, using \eqref{xi2} and \eqref{eq:0} we obtain
$$\irn \omega_{i-1,j,R}^p\omega_{n-1,k,R}^p=o(\eps_R).$$
As a consequence,
\begin{align}\label{eq:3}
\irn|u_{i,R}|^p|u_{n,R}|^p\leq m^{2p}\irn\Big(\sum_{j=0}^{m-1}\omega_{i-1,j,R}^p\Big)\Big(\sum_{k=0}^{m-1}\omega_{n-1,k,R}^p\Big) = o(\eps_R).
\end{align}
Finally, noting that $\|u_{i,R}\|_V=\|u_{1,R}\|_V$ and $|u_{i,R}|_{2p}=|u_{1,R}|_{2p}$ and using \eqref{eq:1}, \eqref{eq:2} and \eqref{eq:3} we estimate \eqref{eq:*} as 
\begin{align*}
c_V^G&\leq\mathcal{J}_V(s_R\bf u_R)
\leq \dfrac{p-1}{2p}\left(\frac{\ell m\|\omega\|_{V_\infty}^2 + \ell\eps_R + o(\eps_R)}{\Big(\ell m\|\omega\|_{V_\infty}^2 + (2p-1)\ell\eps_R + o(\eps_R)\Big)^\frac{1}{p}}\right)^\frac{p}{p-1}\\
&\leq \frac{p-1}{2p}\left((\ell m\|\omega\|_{V_\infty}^2)^\frac{p-1}{p}-C\eps_R\right)^\frac{p}{p-1}
< \ell m \mathfrak{c}_\infty
\end{align*}
for some positive constant $C$ and sufficiently large $R$. Therefore, $c_V^G<\ell m\mathfrak{c}_\infty$, as claimed.
\end{proof}

\begin{proof}[Proof of Theorem \ref{thm:existence}]
$(i)$ follows immediately from Corollary \ref{cor:compactness} and Proposition \ref{prop:existence}, and $(ii)$ is a consequence of Corollary \ref{cor:compactness2} and Proposition \ref{prop:existence}.
\end{proof}

\section{The limit profile of the solutions under weak interaction}
\label{sec:limitprofileweak}
In this section we describe the profile of the solutions given by Theorem \ref{thm:existence} as $\beta\to 0$.

Given a sequence $(\beta_k)$ with $\beta_k<0$ for all $k\in \mathbb{N}$, we write $\mathcal{J}_{V,k}$ and $\mathcal{N}_{V,k}^G$ for the functional and the Nehari set of the system \eqref{eq:system} with $\beta=\beta_k$. We define 
\begin{align*}
	c_{V,k}^G:= \inf_{\bf u\in \mathcal{N}_{V,k}^G}\mathcal{J}_{V,k}(\bf u).
\end{align*}

\begin{proof}[Proof of Theorem \ref{thm:betatozero1}]
As $\beta_k<0$, from Proposition \ref{prop:nehari}$(b)$ we see that $0<a_0\leq\|u_{k,i}\|_V^2\leq |u_{k,i}|_{2p}^{2p}$ and from Proposition \ref{prop:nehari}$(e)$, using Hölder's and Sobolev's inequalities, we get that
$$\irn|u_{k,i}|^p|u_{k,j}|^p\leq |u_{k,i}|^p_{2p}|u_{k,j}|^p_{2p}\leq C\|u_{k,i}\|^p_V\|u_{k,j}\|^p_V\leq a_1$$
for every $k\in\n$ and $i,j=1,\ldots,\ell$. As $\beta_k\to 0$, passing to a subsequence, we may assume that $\beta_1\leq\beta_k$ and that $|\beta_k| a_1<\frac{a_0}{2}$ for all $k$. Therefore,  
$$|u_{1,i}|_{2p}^{2p}+\beta_k\irn|u_{1,i}|^p|u_{1,j}|^p>0\qquad\text{for any \ }k\in\n.$$
Then, by Proposition \ref{prop:nehari}$(d)$, there exists $s_k\in(0,\infty)$ such that $s_k\bf u_1\in\cN_{V,k}^G$ and 
\begin{align*}
\cJ_{V,k}(\bf u_k)\leq \cJ_{V,k}(s_k\bf u_1)&= \frac{p-1}{2p} \left(\frac{\sum\limits_{i=1}^\ell\|u_{1,i}\|_V^2}{\Big(\sum\limits_{i=1}^\ell|u_{1,i}|_{2p}^{2p}+\sum\limits_{\substack{i,j=1 \\ j\neq i}}^\ell\beta_k\irn|u_{1,i}|^p|u_{1,j}|^p\Big)^\frac{1}{p}}\right)^\frac{p}{p-1} \\
&\leq\frac{p-1}{2p} \left(\frac{\sum\limits_{i=1}^\ell\|u_{1,i}\|_V^2}{\Big(\sum\limits_{i=1}^\ell|u_{1,i}|_{2p}^{2p}+\sum\limits_{\substack{i,j=1 \\ j\neq i}}^\ell\beta_1\irn|u_{1,i}|^p|u_{1,j}|^p\Big)^\frac{1}{p}}\right)^\frac{p}{p-1}=\cJ_{V,1}(\bf u_1).
\end{align*}
From Proposition \ref{prop:existence} we derive
\begin{equation}\label{eq:inequality0}
c_{V,k}^G\leq c_{V,1}^G <\ell m\mathfrak{c}_\infty\qquad\text{for every \ }k\in\n.
\end{equation} 

Following the proof of Theorem \ref{thm:splitting}, after passing to a subsequence, there exist $\xi_k\in\rn$ and $C_0>0$ such that, either $\xi_k=0$, or $\xi_k=(z_k,0)$ with $|z_k|\to\infty$, and in both cases the function
$$w_{k,1}(x):=u_{k,1}(x+\xi_k)$$
satisfies that $w_{k,1}$ is $O(N-4)$-invariant and
\begin{equation} \label{eq:nonnull2}
\int_{B_{C_0+1}(0)}|w_{k,1}|^{2p}=\int_{B_{C_0+1}(\xi_k)}|u_{k,1}|^{2p}>\delta>0\qquad\forall k\in\n.
\end{equation}
Since the sequence $(w_{k,1})$ is bounded in $H^1(\rn)$, passing to a subsequence we get that $w_{k,1}\rh w_1$ weakly in $H^1(\rn)$, $w_{k,1}\to w_1$ in $L^{2p}_\mathrm{loc}(\rn)$ and $w_{k,1}\to w_1$ a.e. in $\rn$. Hence, $w_1$ is $O(N-4)$-invariant and $w_1\geq 0$. Furthermore, from \eqref{eq:nonnull2} we see that  $w_1\neq 0$. 

Assume that $\xi_k=(z_k,0)$ with $|z_k|\to\infty$. Set $V_k(x):=V(x+\xi_k)$. Following the proof of \eqref{eq:norms} we obtain
\begin{equation} \label{eq:norms2}
\|u_{k,1}\|_V^2=\Big\|u_{k,1}-\sum_{i=0}^{m-1}(w_1\circ\vartheta_m^{-i})(\,\cdot\,-\vartheta_m^i\xi_k)\Big\|_V^2+m\|w_1\|_{V_\infty}^2+o(1).
\end{equation}
Define $\what w_1(x):=w_1(x-\xi_{k})$ and $w_{k,j}(x):=u_{k,j}(x+\xi_k)$. Then, performing a change of variable we derive
\begin{align} \label{eq:solution}
0&=\partial_1\mathcal{J}_{V,k}(\bf u_k)\what w_1 \\
&=\irn(\nabla w_{k,1}\cdot\nabla w_1+V_kw_{k,1}w_1)-\irn|w_{k,1}|^{2p-2}w_{k,1}w_1 -\beta_k\sum_{j=2}^\ell\irn|w_{k,j}|^p|w_{k,1}|^{p-2}w_{k,1}w_1 \nonumber
\end{align}
and, as $\beta_k\to 0$, passing to the limit and using $(V_2)$ we get that $\|w_1\|_{V_\infty}^2=|w_1|_{2p}^{2p}$. This shows that $w_1\in\cM_\infty$, and from \eqref{eq:norms2} and \eqref{eq:inequality0} we obtain
\begin{align*}
m\mathfrak{c}_\infty\leq\frac{p-1}{2p}m\|w_1\|_{V_\infty}^2\leq\liminf_{k\to\infty}\frac{p-1}{2p}\|u_{k,1}\|_V^2=\liminf_{k\to\infty}\frac{1}{\ell}c_{V,k}^G\leq\frac{1}{\ell}c_{V,1}^G< m\mathfrak{c}_\infty.
\end{align*}
This is a contradiction. 

As a consequence, $\xi_k=0$. Then, $w_{k,j}=u_{k,j}$ and $V_k=V$. Set $u_{0,j}:=w_j\neq 0$. Then, $u_{0,1}, \ldots , u_{0,\ell}$ satisfy $(S_1^G)$ and $(S_2)$ and, passing to the limit, from \eqref{eq:solution} we get that $\|u_{0,1}\|^2_V=|u_{0,1}|_{2p}^{2p}$.  Hence, $u_{0,1}\in\cM^G$ and
\begin{equation}\label{in:lim}
\mathfrak{c}^G \leq \frac{p-1}{2p}\|u_{0,1}\|_V^2\leq \liminf_{k\to\infty}\frac{p-1}{2p}\|u_{k,1}\|_V^2=\liminf_{k\to\infty}\frac{1}{\ell}c_{V,k}^G.  
\end{equation}
We claim that $\ell \mathfrak{c}^G=\lim_{k\to\infty}c_{V,k}^G$. To prove this claim let $v_k \in \mathcal{M}^G$ be such that  $J(v_k) \to \mathfrak{c}^G$. Set $v_{k,1} := v_k$ and let $v_{k, j+1}$ satisfy the pinwheel condition $(S_2)$ for every $j = 1,\ldots , \ell-1$. Set ${\bf v_k = (v_{k,1},\ldots,v_{k,\ell})}$. Since $(v_k)$ is bounded in $H^1(\rn)$ and $\beta_k \to 0$, we have that
\begin{align*}
\lim_{k\to\infty} \beta_k\irn|v_{k,j}|^p|v_{k,i}|^{p} = 0 \qquad  \mbox{for every }  i, j.
\end{align*}
Thus, by Proposition \ref{prop:nehari}(d), for each sufficiently large $k$  there exists $s_k \in (0,\infty)$ such that $s_k\bf{v}_k \in \mathcal{N}^G_{V,k}$ and $s_k \to 1$. Therefore,
\begin{align*}
c_{V,k}^G\leq\mathcal{J}_{V,k}(s_k\bf{v}_k)=\frac{p-1}{2p}\sum_{j=1}^\ell\|s_k v_{k,j}\|_V^2 = \frac{p-1}{2p}\ell s_k^2\|v_{k}\|_V^2=\ell s_k^2 J(v_k).
\end{align*}
It follows that $\limsup_{k\to\infty}c_{V,k}^G\leq \ell \mathfrak{c}^G$ which, together with \eqref{in:lim}, yields $\ell \mathfrak{c}^G=\lim_{k\to\infty}c_{V,k}^G$. Hence, the inequalities in \eqref{in:lim} are, in fact, equalities. As a consequence, $u_{k,j}\to u_{0,j}$ strongly in $ H^1(\rn)$  and from \eqref{eq:inequality0} we get that
\begin{align*}  
\mathfrak{c}^G=\frac{p-1}{2p}\|u_{0,j}\|_V^2 = \lim_{k\to\infty} \frac{ {c}_{V,k}^G}{\ell}  \leq \frac{ {c}_{V,1}^G}{\ell} <  m \mathfrak{c}_\infty,
\end{align*}	
for every $j=1,\ldots, \ell$, as claimed.
\end{proof}

\begin{proof}[Proof of Theorem \ref{thm:betatozero2}]
Arguing as in the proof of Theorem \ref{thm:splitting2} we see that, after passing to a subsequence, there exist $\widetilde y_k\in \r^{N-4}$ such that, if we set $\widetilde u_{k,j}(z,y):=v_{k,j}(z,y-\widetilde y_k)$, then $\widetilde{\bf u}_k:=(\widetilde u_{k,1},\ldots,\widetilde u_{k,\ell})\in\cN^G_{V_\infty,k}$, $\cJ_{V_\infty,k}(\widetilde{\bf u}_k)\to c_{V_\infty,k}^G$ and there exist $\xi_k\in\rn$ and $C_0>0$ such that, either $\xi_k=0$, or $\xi_k=(z_k,0)$ with $|z_k|\to\infty$, and in both cases
$$w_{k,1}(x):=\widetilde u_{k,1}(x+\xi_k)$$
satisfies
\begin{equation*}
\int_{B_{C_0+1}(0)}|w_{k,1}|^{2p}=\int_{B_{C_0+1}(\xi_k)}|u_{k,1}|^{2p}>\delta>0\qquad\forall k\in\n.
\end{equation*}
Now we follow the proof of Theorem \ref{thm:betatozero1} to show that $\widetilde u_{k,1}\to\widetilde u_{0,1}$ strongly in $H^1(\rn)$, where $\widetilde u_{0,1}$ is a positive least energy solution to \eqref{eq:limit_problem}.

Since,  up to translations, $\omega$ is the only positive solution to \eqref{eq:limit_problem} and $\widetilde u_{0,1}$ is $G$-invariant, there exists $\widetilde y\in\r^{N-4}$ such that $\omega(z,y)= \widetilde{u}_{0,1}(z,y+\widetilde y)$ for every $(z,y)\in\rn$. So, setting $y_k := \widetilde y_k - \widetilde y$ and ${u_{k,j}(z,y):= v_{k,j}(z, y - y_k) }$, we get that $u_{k,j}\to\omega$ strongly in $H^1(\rn)$ for every $j=1,\ldots,\ell$, as claimed.
\end{proof}

\section{The limit profile of the solutions under strong interaction}
\label{sec:limitprofilestrong}
In this section we describe the profile of the solutions given by Theorem \ref{thm:existence} as $\beta\to -\infty$. The set of {\it weak $(G,V)$-pinwheel partitions} is defined as
\begin{align*}
  \mathcal{W}_{V}^G:= \{ \bf u \in \mathscr{H}^G: u_i \neq 0, \|u_i\|_V^2 = |u_i|_{2p}^{2p} \ \mbox{ and } \ u_iu_j \equiv 0 \ \mbox{if} \ i\neq j\}.
\end{align*}
Set 
\begin{align*}
	\widehat{c}_V^G:=  \inf_{ \bf u \in \mathcal{W}_V^G} \frac{p-1}{2p}\sum_{i=1}^\ell\|u_i\|^2_V=\inf_{ \bf u \in \mathcal{W}_V^G} \frac{p-1}{2p}\ell\,\|u_1\|^2_V.
\end{align*}
Note that $\mathcal{W}_V^G\subset \mathcal{N}_V^G$ for every $\beta<0$, hence 
\begin{align}\label{boundC}
c_V^G\leq\widehat{c}_V^G.	
\end{align}
The following lemma will play a crucial role.

\begin{lemma}\label{lem:upperbound}
If $m$ is even and $2\sin\frac{\pi}{m}<\sin\frac{\pi}{2\ell}$, then $\widehat{c}_V^G <\ell m\mathfrak{c}_\infty.$ 
\end{lemma}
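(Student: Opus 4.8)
The plan is to build an explicit test configuration in $\mathcal{W}_V^G$ whose energy is strictly below $\ell m\mathfrak{c}_\infty$, mimicking the ansatz used in Proposition~\ref{prop:existence} but now with the copies of $\omega$ \emph{truncated and glued} so that distinct components have disjoint supports. Recall the points $\xi_j=(\e^{2\pi\i j/m},0,0)$, $j\in\z/m\z$, and the rotated copies $R\vr_\ell^{-i}\xi_j$. Because $m$ is even and $2\sin\frac{\pi}{m}<\sin\frac{\pi}{2\ell}$, the $\ell m$ centers split into $\ell$ groups (one per component), and by \eqref{xi1}--\eqref{xi2} the minimal distance \emph{within} a group, namely $2\sin\frac{\pi}{m}\,R$, is strictly smaller than the minimal distance \emph{between} groups, which is $2\sin\frac{\pi}{2\ell}\,R$. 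This gap is exactly what lets us cut each $\omega$-copy off at radius $\rho R$ with $\sin\frac{\pi}{m}<\rho<\sin\frac{\pi}{2\ell}$ so that, after truncation, copies belonging to different components no longer overlap, while copies within the same component may still overlap slightly (which only helps, by superadditivity).

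First I would fix a smooth cutoff $\chi_R(x)=\chi(|x|/(\rho R))$, equal to $1$ on $B_{\rho R/2}$ and supported in $B_{\rho R}$, and set $\widetilde\omega_{i,j,R}:=\chi_R(\,\cdot\,-R\vr_\ell^{-i}\xi_j)\,\omega(\,\cdot\,-R\vr_\ell^{-i}\xi_j)$, then $v_{i,R}:=\sum_{j=0}^{m-1}\widetilde\omega_{i-1,j,R}$ for $i=1,\dots,\ell$. By construction $v_{i+1,R}=v_{i,R}\circ\vr_\ell$, so $\bf v_R\in\mathscr{H}^G$, and for $R$ large $v_{i,R}v_{n,R}\equiv 0$ when $i\ne n$ by the distance gap above. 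Rescaling each $v_{i,R}$ by the unique positive factor that puts it on the constraint $\|\cdot\|_V^2=|\cdot|_{2p}^{2p}$ (the scaling is common to all $i$ by symmetry), we land in $\mathcal{W}_V^G$, so $\widehat c_V^G\le\frac{p-1}{2p}\,\ell\,\|\text{(rescaled }v_{1,R})\|_V^2$. It then remains to show this last quantity is $<\ell m\mathfrak{c}_\infty$ for $R$ large.

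The estimate proceeds exactly as in Proposition~\ref{prop:existence}: with $\eps_R$ as in \eqref{def:epsR}, the truncation error is exponentially negligible (controlled via \eqref{inequalityasymp} and Lemma~\ref{lem:exp}, since the tail mass of $\omega$ outside $B_{\rho R}$ decays like $\e^{-2\rho\sqrt{V_\infty}R}$ and $2\rho>2\sin\frac{\pi}{m}$, hence is $o(\eps_R)$ by \eqref{eq:0}), so $\|v_{1,R}\|_V^2\le m\|\omega\|_{V_\infty}^2+\eps_R+o(\eps_R)$ using \eqref{eq:V}, while $|v_{1,R}|_{2p}^{2p}\ge m\|\omega\|_{V_\infty}^2+(2p-1)\eps_R+o(\eps_R)$ by Lemma~\ref{lem:q}. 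Feeding these into the scaling formula \eqref{eq:functionalJs} (with $\beta$-term absent since we are on $\mathcal{W}_V^G$) gives, just as at the end of the proof of Proposition~\ref{prop:existence},
\begin{align*}
\widehat c_V^G\le\frac{p-1}{2p}\left(\frac{\ell m\|\omega\|_{V_\infty}^2+\ell\eps_R+o(\eps_R)}{\big(\ell m\|\omega\|_{V_\infty}^2+(2p-1)\ell\eps_R+o(\eps_R)\big)^{1/p}}\right)^{\frac p{p-1}}<\ell m\mathfrak{c}_\infty
\end{align*}
for $R$ large, since $2p-1>1$ makes the denominator grow faster than the numerator.

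The main obstacle is ensuring the disjoint-support condition for the competing components is genuinely achievable: this is precisely where the strengthened hypothesis $2\sin\frac{\pi}{m}<\sin\frac{\pi}{2\ell}$ (rather than the weaker $2\sin\frac{\pi}{m}<2$ of $(V_3^m)$ combined with $m>2\ell$) is used, and one must check carefully that the cutoff radius $\rho R$ with $\sin\frac{\pi}{m}<\rho<\sin\frac{\pi}{2\ell}$ both (a) separates the $\ell$ component-groups — the centers of different groups are at distance $\ge 2\sin\frac{\pi}{2\ell}\,R>2\rho R$, so truncated balls are disjoint — and (b) still leaves the truncation loss within one component of lower order than $\eps_R$, i.e. $2\rho>2\sin\frac{\pi}{m}$. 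Both constraints are compatible exactly under the stated hypothesis. A minor secondary point is verifying that the common rescaling factor $s_R\to 1$ and that the gradient of the cutoff contributes only $o(\eps_R)$ to $\|v_{1,R}\|_V^2$; this is standard and follows from $|\nabla\chi_R|\le C/(\rho R)$ together with the exponential tail bound \eqref{inequalityasymp}.
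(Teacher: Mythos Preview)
Your strategy is exactly the paper's: truncate the bumps in the ansatz of Proposition~\ref{prop:existence} so that distinct components have disjoint supports, then repeat the interaction estimates. The only issue is a quantitative slip in your choice of cutoff. You take $\chi_R$ equal to $1$ on $B_{\rho R/2}$ and supported in $B_{\rho R}$, and then estimate the truncation loss as $O(\e^{-2\rho\sqrt{V_\infty}R})$. But the truncation begins at the \emph{inner} radius $\rho R/2$, so the loss in $\|\omega\|_{V_\infty}^2$ is only $O(\e^{-\rho\sqrt{V_\infty}R})$, and, more restrictively, the analogues of the paper's interaction estimates (showing that $\langle\widetilde\omega_{0,j,R},\widetilde\omega_{0,k,R}\rangle_{V_\infty}=\eps_R+o(\eps_R)$, which you do need) yield error terms of order $\e^{-\frac{\rho}{2}\sqrt{V_\infty}R}$. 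To make these $o(\eps_R)$ you would need $\tfrac{\rho}{2}>2\sin\tfrac{\pi}{m}$, i.e.\ $4\sin\tfrac{\pi}{m}<\rho<\sin\tfrac{\pi}{2\ell}$, which is strictly stronger than the stated hypothesis $2\sin\tfrac{\pi}{m}<\sin\tfrac{\pi}{2\ell}$.

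The paper avoids this by taking the cutoff equal to $1$ on $B_{(1-\eps)rR}$ and supported in $B_{rR}$ with $\eps>0$ small, and choosing $r$ so that $2\sin\tfrac{\pi}{m}<(1-\eps)r<r<\sin\tfrac{\pi}{2\ell}$; then the inner and outer radii are close enough that only the hypothesis $2\sin\tfrac{\pi}{m}<\sin\tfrac{\pi}{2\ell}$ is needed. With this adjustment your argument goes through and matches the paper's.
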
 

\begin{proof}
Choose $\delta\in \left(4\sin\frac{\pi}{m},2\sin\frac{\pi}{2\ell}\right)$,  $\eps \in \big(0, \tfrac{2\sin\frac{\pi}{2\ell} - \delta}{2\sin\frac{\pi}{2\ell} + \delta}\big)$ and a radial cut-off function $\chi \in\cC^\infty_c(\mathbb{R}^N)$ satisfying $0\leq\chi\leq 1$, $\chi(x) = 1$ for $|x| \leq {1} - \eps$ and $\chi(x) = 0$ for $|x| \geq 1$. For $s>0$ set $\chi_s(x):=\chi\left(\frac{x}{s}\right)$ and $\omega_s:= \chi_s\omega$, where $\omega$ is the positive radial least energy solution to \eqref{eq:limit_problem}. In \cite[Lemma 2]{cw} (see also \cite[Lemma 4.1]{cp}) it is shown that
\begin{equation} \label{eq:est1}
\left|\|\omega\|_{V_\infty}^2 - \|\omega_s\|_{V_\infty}^2\right| = O(\e^{-2(1-\eps)\sqrt{V_\infty}s}) \qquad \mbox{and} \qquad \big||\omega|_{2p}^{2p}-|\omega_{s}|_{2p}^{2p}\big| = O(\e^{-2p(1-\eps)\sqrt{V_\infty}s}).
\end{equation}
Let ${\xi_j:=(\e^{2\pi\mathrm{i}j/m},0,0)\in\cc\times\cc\times\r^{N-4}}$ and set
 \begin{align}\label{auxlemma51}
 	2\sin\tfrac{\pi}{m}<\frac{\delta}{2}<r := \frac{2\sin\tfrac{\pi}{2\ell} + \delta}{4}<\sin\tfrac{\pi}{2\ell}.
\end{align} 
For each $i\in\z/\ell\z$, $j\in\z/m\z$ and $R>1$ define
 \begin{align*}
 	\omega_{i,j,R}(x):=\omega_{rR}(x-R\vr_\ell^{-i}\xi_j)= \chi_{rR}(x-R\vr_\ell^{-i}\xi_j)\omega(x-R\vr_\ell^{-i}\xi_j)
 \end{align*}
and let
\begin{align*}
\widehat u_{i,R}:=\sum_{j=0}^{m-1}\omega_{i-1,j,R},  \quad i=1,\ldots,\ell.
\end{align*}
Then $\widehat{\bf u}_R:=(\widehat{u}_{1,R},\ldots,\widehat{u}_{\ell,R})\in\mathscr{H}^G$ and, by \eqref{xi1}, \eqref{xi2} and \eqref{auxlemma51}, $\supp (\widehat{u}_{i,R})\cap \supp (\widehat{u}_{j,R}) = \emptyset$ if $i\neq j$. So, setting $s_R:=\Big(\frac{\|s_R\widehat{u}_{1,R}\|_V^2}{|s_R\widehat{u}_{1,R}|_{2p}^{2p}}\Big)^\frac{1}{2p-2}$, we have that $s_R\widehat{\bf u}_R \in \mathcal{W}_V^G$ and
\begin{equation}\label{eq:what c}
\what c_V^G\leq\frac{p-1}{2p}\ell\,\|s_R\widehat u_{1,R}\|^2_V=\frac{p-1}{2p}\ell\left(\frac{\|\widehat{u}_{1,R}\|_V^2}{(|\widehat{u}_{1,R}|_{2p}^{2p})^{\frac{1}{p}}}\right)^{\frac{p}{p-1}}.
\end{equation}
Let $\eps_R$ be defined as in \eqref{def:epsR}. Recall that 
$$\lim_{R\to\infty}\eps_R \, R^\frac{N-1}{2}\e^{2\sin\frac{\pi}{m}\sqrt{V_\infty}R}=\bar{a}>0.$$
Since $r(1-\eps)\geq\frac{\delta}{2}>2\sin\frac{\pi}{m}$, from \eqref{eq:est1} we get
\begin{equation}\label{eq:est2}
\|\omega \|_{V_{\infty}}^2=\|\omega_{rR}\|_{V_{\infty}}^2 + o(\eps_R)\qquad\text{and}\qquad |\omega|_{2p}^{2p}=\|\omega_{rR}\|_{2p}^{2p} + o(\eps_R)\qquad\text{as \ }R\to\infty.
\end{equation}
Then, noting that $\omega_{rR}\leq\omega$, from \eqref{eq:V} we derive
\begin{align} \label{eq:A}
\|\widehat{u}_{1,R}\|_V^2 &=\|\widehat{u}_{1,R}\|_{V_{\infty}}^2+\int_{\rn}(V-V_{\infty})\widehat{u}^2_{1,R}\\
&\leq m\|\omega_{rR}\|_{V_{\infty}}^2+\sum_{\substack{j,k=0 \\ j\neq k}}^{m-1}\langle \omega_{rR}(\cdot-R\xi_j), \omega_{rR}(\cdot-R\xi_k)\rangle_{V_{\infty}} +m\sum_{j=0}^{m-1}\int_{\rn}(V-V_{\infty})^+\omega_{rR}^2(\cdot-R\xi_j)\nonumber \\
&=m\|\omega \|_{V_{\infty}}^2 +\sum_{\substack{j,k=0 \\ j\neq k}}^{m-1}\langle \omega_{rR}(\cdot-R\xi_j), \omega_{rR}(\cdot-R\xi_k)\rangle_{V_{\infty}}  + o(\eps_R),\nonumber
\end{align}
and using Lemma \ref{lem:q} we obtain
\begin{align} \label{eq:B}
|\widehat{u}_{1,R}|_{2p}^{2p} &=\int_{\rn} \Big|\sum_{j=0}^{m-1}\omega_{rR}(\cdot-R\xi_j)\Big|^{2p} \\
&\geq m|\omega_{rR}|_{2p}^{2p}+(2p-1)\sum_{\substack{j,k=0 \\ k\neq j}}^{m-1}\int_{\rn}\omega_{rR}^{2p-1}(\cdot-R\xi_j)\omega_{rR}(\cdot-R\xi_k)\nonumber \\
&=m|\omega|_{2p}^{2p}+(2p-1)\sum_{\substack{j,k=0 \\ k\neq j}}^{m-1}\int_{\rn}\omega_{rR}^{2p-1}(\cdot-R\xi_j)\omega_{rR}(\cdot-R\xi_k)+o(\eps_R).\nonumber
\end{align}
For $k\neq j$ set $\zeta_{jk}=\xi_j-\xi_k$. Then, as $0\leq\chi\leq 1$, we derive from \eqref{inequalityasymp} that
\begin{align} \label{eq:interaction 1}
&\irn |\omega^{2p-1}\omega(\cdot - R\zeta_{kj})-\omega^{2p-1}\omega_{rR}(\cdot - R\zeta_{kj})|=\irn |\omega^{2p-1}(\omega(\cdot - R\zeta_{kj})-\omega_{rR}\left(\cdot - R\zeta_{kj})\right)|\\
&\leq \int_{|x-R\zeta_{kj}|>r(1-\eps)R} \omega^{2p-1}(x)\omega(x - R\zeta_{kj})\d x\leq C\int_{|x-R\zeta_{kj}|>r(1-\eps)R}\omega^{2p-1}(x)\e^{-\sqrt{V_\infty}|x-R\zeta_{kj}|}\d x \nonumber \\
&\leq C\e^{-r(1-\eps)\sqrt{V_\infty}R}|\omega|_{2p-1}^{2p-1}=o(\eps_R), \nonumber
\end{align}
because $r(1-\eps)>2\sin\frac{\pi}{m}$. Similarly,
\begin{align} \label{eq:interaction 2}
&\irn |\omega^{2p-1}\omega_{rR}(\cdot - R\zeta_{kj})-\omega_{rR}^{2p-1}\omega_{rR}(\cdot - R\zeta_{kj})|=\irn |(\omega^{2p-1}-\omega_{rR}^{2p-1})\omega_{rR}(\cdot - R\zeta_{kj})|\\
&\leq \int_{|x|>r(1-\eps)R} \omega^{2p-1}(x)\omega_{rR}(x - R\zeta_{kj})\d x\leq C\int_{|x|>r(1-\eps)R}\e^{-(2p-1)\sqrt{V_\infty}|x|}\omega(\cdot - R\zeta_{kj})\d x \nonumber \\
&\leq C\e^{-r(1-\eps)\sqrt{V_\infty}R|}|\omega|_1=o(\eps_R). \nonumber
\end{align}
From \eqref{def:epsR}, \eqref{eq:interaction 1} and \eqref{eq:interaction 2} we obtain
\begin{equation}\label{eq:interaction 3}
\int_{\rn}\omega_{rR}^{2p-1}(\cdot-R\xi_j)\omega_{rR}(\cdot-R\xi_k)=\int_{\rn}\omega_{rR}^{2p-1}\omega_{rR}(\cdot-R\zeta_{kj})=\eps_R + o(\eps_R)
\end{equation}
and, as $\omega$ solves \eqref{eq:limit_problem}, we derive from \eqref{def:epsR} and \eqref{eq:interaction 1} that
\begin{equation} \label{eq:interaction 4}
\langle \omega, \omega_{rR}(\cdot-R\zeta_{kj})\rangle_{V_{\infty}}=\irn\omega^{2p-1}\omega_{rR}(\cdot-R\zeta_{kj})=\eps_R + o(\eps_R).
\end{equation}
Note that \eqref{eq:interaction 2} holds true also for $p=1$. Therefore,
\begin{align} \label{eq:interaction 5}
\langle \omega_{rR}-\omega, \omega_{rR}(\cdot-R\zeta_{kj})\rangle_{V_{\infty}}&=\irn\nabla(\omega_{rR}-\omega)\cdot\nabla\omega_{rR}(\cdot-R\zeta_{kj}) + V_\infty\irn(\omega_{rR}-\omega)\omega_{rR}(\cdot-R\zeta_{kj}) \\
&=\irn\nabla(\omega_{rR}-\omega)\cdot\nabla\omega_{rR}(\cdot-R\zeta_{kj}) + o(\eps_R). \nonumber
\end{align}
Using the asymptotic \eqref{asymptcondition} we get
\begin{align*}
&|\nabla\omega_{rR}(x)|=|\chi_{rR}(x)\nabla\omega(x) + \tfrac{1}{rR}\omega(x)\nabla\chi(x)|\leq C(|\nabla\omega(x)|+\omega(x))\leq C\e^{-\sqrt{V_\infty}|x|}, \\
&|\nabla(\omega_{rR}-\omega)(x)|=|(\chi_{rR}(x)-1)\nabla\omega(x) + \tfrac{1}{rR}\omega(x)\nabla\chi(x)|\leq C(|\nabla\omega(x)|+\omega(x))\leq C\e^{-\sqrt{V_\infty}|x|}.
\end{align*} 
Therefore,
\begin{align*}
\irn|\nabla(\omega_{rR}-\omega)\cdot\nabla\omega_{rR}(\cdot-R\zeta_{kj})|&\leq\int_{|x|>r(1-\eps)R} |\nabla(\omega_{rR}-\omega)(x)|\,|\nabla\omega_{rR}(x-R\zeta_{kj})|\d x \\
&\leq C\e^{-r(1-\eps)\sqrt{V_\infty}R}|\nabla\omega_{rR}|_1 = o(\eps_R).
\end{align*}
Combining this estimate with \eqref{eq:interaction 4} and \eqref{eq:interaction 5} we obtain
\begin{equation} \label{eq:interaction 6}
\langle \omega_{rR}(\cdot-R\xi_j), \omega_{rR}(\cdot-R\xi_k)\rangle_{V_{\infty}}=\langle \omega_{rR}, \omega_{rR}(\cdot-R\zeta_{k,j})\rangle_{V_{\infty}}=\eps_R + o(\eps_R).
\end{equation}
From \eqref{eq:A}, \eqref{eq:B}, \eqref{eq:interaction 3} and \eqref{eq:interaction 6} we derive
\begin{align*}
\frac{\|\widehat{u}_{1,R}\|_V^2}{(|\widehat{u}_{1,R}|_{2p}^{2p})^{\frac{1}{p}}}&=\frac{m\|\omega\|_{V_{\infty}}^2+\eps_R+o(\eps_R)}{\left(m|\omega|_{2p}^{2p}+(2p-1)\eps_R+o(\eps_R)\right)^{\tfrac{1}{p}}}\leq ( m\|\omega\|_{V_\infty}^2)^\frac{p-1}{p}-C\eps_R
\end{align*}
for some positive constant $C$ and sufficiently large $R$. Inserting this estimate into \eqref{eq:what c} we get that $\widehat{c}_{V}^G < \ell m \mathfrak{c}_{\infty}$, as claimed.
\end{proof}

For any $G$-invariant open subset $\o$ of $\rn$ we set $H^1_0(\o)^G:=\{u\in H^1_0(\o):u\text{ is }G\text{-invariant}\}$. Consider the problem
\begin{equation} \label{prob_omega}
\begin{cases}
-\Delta u + V(x)u = |u|^{2p-2}u, \\
u\in H^1_0(\o)^G.
\end{cases}
\end{equation}
A minimizer for the functional $J$, as defined in \eqref{functionalJsingle}, on $\mathcal{M}_{\Omega}^G:=\{ u\in H^1_0(\o)^G: u\neq 0, \ \|u\|_V^{2}=|u|_{2p}^{2p} \}$ is a least energy solution to \eqref{prob_omega}. Set
\begin{equation*}
\mathfrak{c}_{\o}^G:=\inf_{ u\in\mathcal{M}_{\Omega}^G }J(u)= \inf_{ u\in\mathcal{M}_{\Omega}^G } \frac{p-1}{2p}\|u\|_V^{2}.
\end{equation*}

\begin{definition} \label{def:partition}
An $\ell$-tuple $(\o_1, \dots, \o_{\ell})$ of subsets of $\rn$ is an \textbf{optimal $(G,\vr_\ell)$-pinwheel partition} for the problem \eqref{eq:single} if
\begin{itemize}
\item[$(P_1)$] $\o_i\neq\emptyset$, \ $\o_i\cap\o_j=\emptyset$ if $i\neq j$, \ $\rn=\bigcup_{i=1}^\ell\overline{\o}_i$ \ and \ $\o_i$ is open and $G$-invariant,
\item[$(P_2)$] for each  $i=1,\ldots,\ell$, the value $c_{\o_i}^G$ is attained by some function $u_i\in\cM_{\o_i}^G$,
\item[$(P_3)$] $\vr_\ell(\o_2)=\o_1, \ \ldots \ , \ \vr_\ell(\o_\ell)=\o_{\ell-1}, \ \ \vr_\ell(\o_1)=\o_\ell$,
\item[$(P_4)$] and
\begin{equation*}
\sum_{j=1}^{\ell}\mathfrak{c}_{\o_{j}}^G =\inf_{(\Theta_{1},\dots \Theta_{\ell}) \in \mathcal{P}_V^G}	\sum_{j=1}^{\ell}\mathfrak{c}_{\Theta_{j}}^G.
\end{equation*}
where $\cP_V^G$ is the set of all $\ell$-tuples $(\Theta_{1},\dots,\Theta_{\ell})$ of subsets of $\rn$ that satisfy $(P_1)$, $(P_2)$ and $(P_3)$.
\end{itemize}
\end{definition}

Note that, as the sets $\o_1, \dots, \o_{\ell}$ are mutually isometric and exhaust $\rn$, every set $\o_i$ of the partition is necessarily unbounded.

\begin{proof}[Proof of Theorem \ref{theorembetainfinity1}]
$(i):$ \ We write $\mathcal{J}_{V,k}$ and $\mathcal{N}_{V,k}^G$ for the functional and the Nehari set of the system \eqref{eq:system} with $\beta=\beta_k$. It follows from $\eqref{boundC}$ that
\begin{align*}
	c_{V,k}^G=\cJ_{V,k}(\bf u_k)\leq\what c_V^G\qquad\text{for all \ }k\in\n.
\end{align*}
Therefore, $(u_{k,j})$ is bounded in $H^1(\rn)$ for each $j=1,\ldots,\ell$.  Following the proof of Theorem \ref{thm:splitting}, after passing to a subsequence, there exist $\xi_k\in\rn$ and $C_0>0$ such that, either $\xi_k=0$, or $\xi_k=(z_k,0)$ with $|z_k|\to\infty$, and in both cases the function
$$w_{k,1}(x):=u_{k,1}(x+\xi_k)$$
is $O(N-4)$-invariant and satisfies
\begin{equation} \label{eq:nonnull3}
\int_{B_{C_0+1}(0)}|w_{k,1}|^{2p}=\int_{B_{C_0+1}(\xi_k)}|u_{k,1}|^{2p}>\delta>0\qquad\forall k\in\n.
\end{equation}
Since the sequence $(w_{k,1})$ is bounded in $H^1(\rn)$, passing to a subsequence we get that $w_{k,1}\rh w_1$ weakly in $H^1(\rn)$, $w_{k,1}\to w_1$ in $L^{2p}_\mathrm{loc}(\rn)$ and $w_{k,1}\to w_1$ a.e. in $\rn$. Hence, $w_1$ is $O(N-4)$-invariant and $w_1\geq 0$. From \eqref{eq:nonnull3} we see that  $w_1\neq 0$. 

Assume that $\xi_k=(z_k,0)$ with $|z_k|\to\infty$. Set $V_k(x):=V(x+\xi_k)$. Following the proof of \eqref{eq:norms} we obtain
\begin{equation} \label{eq:norms3}
\|u_{k,1}\|_V^2=\Big\|u_{k,1}-\sum_{i=0}^{m-1}(w_1\circ\vartheta_m^{-i})(\,\cdot\,-\vartheta_m^i\xi_k)\Big\|_V^2+m\|w_1\|_{V_\infty}^2+o(1).
\end{equation}
Define $\what w_1(x):=w_1(x-\xi_{k})$ and $w_{k,j}(x):=u_{k,j}(x+\xi_k)$. Performing a change of variable and recalling that $w_{k,1}\geq 0$ and $w_1\geq 0$, we see that
\begin{align} \label{eq:solution2}
0&=\partial_1\mathcal{J}_{V,k}(\bf u_k)\what w_1 \\
&=\irn(\nabla w_{k,1}\cdot\nabla w_1+V_kw_{k,1}w_1)-\irn|w_{k,1}|^{2p-2}w_{k,1}w_1 -\beta_k\sum_{j=2}^\ell\irn|w_{k,j}|^p|w_{k,1}|^{p-2}w_{k,1}w_1 \nonumber \\
&\geq \irn(\nabla w_{k,1}\cdot\nabla w_1+V_kw_{k,1}w_1)-\irn|w_{k,1}|^{2p-2}w_{k,1}w_1. \nonumber
\end{align}
Passing to the limit and using $(V_2)$ we get that $\|w_1\|_{V_\infty}^2\leq|w_1|_{2p}^{2p}$. Hence, there exists $s\in(0,1]$ such that $\|sw_1\|_{V_\infty}^2=|sw_1|_{2p}^{2p}$, i.e., $sw_1\in\cM_\infty$. From \eqref{eq:norms3}, \eqref{boundC} and Lemma \ref{lem:upperbound} we obtain
\begin{align*}
m\mathfrak{c}_\infty\leq\frac{p-1}{2p}m\|sw_1\|_{V_\infty}^2\leq\frac{p-1}{2p}m\|w_1\|_{V_\infty}^2\leq\liminf_{k\to\infty}\frac{p-1}{2p}\|u_{k,1}\|_V^2=\liminf_{k\to\infty}\frac{1}{\ell}c_{V,k}^G\leq\frac{1}{\ell}\what c_V^G< m\mathfrak{c}_\infty.
\end{align*}
This is a contradiction.

As a consequence, $\xi_k=0$. Then, $w_{k,j}=u_{k,j}$ and $V_k=V$. Set $u_{\infty,j}:=w_j\neq 0$. Then, $\bf u_\infty=(u_{\infty,1}, \ldots , u_{\infty,\ell})\in\mathscr H^G$, $u_{\infty,j}\geq 0$, $u_{\infty,j}\neq 0$, and letting $k\to\infty$ in \eqref{eq:solution2} we get that $\|u_{\infty,1}\|^2_V\leq|u_{\infty,1}|_{2p}^{2p}$. Furthermore, since $(\mathcal{J}_{V,k}^G)'(\mathbf{u}_k)=0$, for each $i\neq j$ we have that
\begin{align*}
0\leq\int_{\rn}|u_{k,j}|^p|u_{k,i}|^p\leq \dfrac{|u_{k,i}|_{2p}^{2p}}{|\beta_k|}\leq \dfrac{C}{|\beta_k|}.
\end{align*}
As $\beta_k\rightarrow -\infty$, passing to the limit and using Fatou's lemma we obtain
\begin{align*}
0\leq\int_{\rn}|u_{\infty,j}|^p|u_{\infty,i}|^p\leq \liminf_{k\to\infty} \int_{\rn}|u_{k,j}|^p|u_{k,i}|^p=0.
	 \end{align*}
Therefore, $u_{\infty,j}u_{\infty,i}=0$ a.e. in $\mathbb{R}^N$ whenever $i\neq j$. Let $s\in (0,1]$ be such that $\|su_{\infty,1}\|_V^2=|su_{\infty,1}|_{2p}^{2p}$. Then, $s\bf u_{\infty}\in \mathcal{W}_V^G$ and using \eqref{boundC} we get
  \begin{align*}
  	\widehat{c}_V^G\leq\frac{p-1}{2p}\sum_{i=1}^{\ell}\|su_{\infty,i}\|_V^2 \leq \frac{p-1}{2p}\sum_{i=1}^{\ell}\|u_{\infty,i}\|_V^2\leq \frac{p-1}{2p}\sum_{i=1}^{\ell}\liminf_{k\to\infty}\|u_{k,i}\|_V^2\leq \widehat{c}_V^G.
  \end{align*}
Thus, $s=1$, $\bf u_{\infty}\in \mathcal{W}_V^G$, $u_{k,i}\rightarrow u_{\infty,i}$ strongly in $H^1(\mathbb{R}^N)$ and 
\begin{align*}
	\widehat{c}_V^G= \dfrac{p-1}{2p}\sum_{i=1}^{\ell}\|u_{\infty,i}\|_V^2.
\end{align*}
Finally, as $\|u_{\infty,i}\|_V^2=\lim_{k\to \infty}\|u_{k,i}\|_V^2=\lim_{k\to \infty}|u_{k,i}|_{2p}^{2p}=|u_{\infty,i}|_{2p}^{2p}$, from
\begin{align*}
	\lim_{k\to \infty}\|u_{k,i}\|_V^2=\lim_{k\to \infty}|u_{k,i}|_{2p}^{2p}+\lim_{k\to \infty}\beta_k\sum_{\substack{i,j=1 \\ i\neq j}}\int_{\r^N}|u_{k,j}|^p|u_{k,i}|^p,
\end{align*}
we obtain
\begin{align*}
	\int_{\r^N}\beta_k|u_{k,j}|^p|u_{k,i}|^p\rightarrow 0 \; \mbox{as  }\, k\rightarrow \infty\quad \mbox{whenever }\, i\neq j.
\end{align*}
This completes the proof of $(i)$. The proof of statements $(ii)$ and $(iii)$ is the same as in \cite[Theorem 1.4]{cp}. We give the details for the sake of completeness.

$(ii):$ \ As in \cite[Lemma 5.1]{cp} one sees that $(u_{k,j})$ is uniformly bounded in $L^\infty(\rn)$ for every $j=1,\ldots,\ell$. Then, \cite[Theorem B2]{cpt} states that $(u_{k,i})$ is uniformly bounded in $\cC^{0,\alpha}(K)$ for every compact subset $K$ of $\rn$ and $\alpha\in (0,1)$. By the Arzela-Ascoli theorem, $u_{\infty,j}\in\cC^0(\mathbb{R}^N)$ and, hence, $\Omega_j=\{x\in \mathbb{R}^N: u_{\infty,j}(x)>0\}$ is open for each $j=1,\dots, \ell$. Clearly $(\Omega_1,\dots, \Omega_{\ell})$ satisfy $(P_1)$ and $(P_3)$. We claim that $\frac{p-1}{2p}\|u_{\infty,j}\|_V^2=\mathfrak{c}_{\Omega_j}^G$. Arguing by contradiction, assume that $\frac{p-1}{2p}\|u_{\infty,j}\|_V^2> \mathfrak{c}_{\Omega_j}^G$. Then there exists $(v_1,\dots, v_{\ell}) \in \mathcal{W}_V^G$ such that $\mathfrak{c}_{\Omega_1}^G\leq\frac{p-1}{2p}\|v_1\|_V^2<\frac{p-1}{2p}\|u_{\infty,1}\|_V^2$. As a consequence,
\begin{align*}
\frac{p-1}{2p}\sum_{j=1}^{\ell}\|v_{j}\|_V^2<\frac{p-1}{2p}\sum_{j=1}^{\ell}\|u_{\infty,j}\|_V^2=\widehat{c}_V^G,
\end{align*}
which contradicts the definition of $\widehat{c}_V^G$. This proves $(P_2)$. Furthermore, we get that
\begin{align*}
\inf_{(\Theta_1,\dots,\Theta_{\ell})\in \mathcal{P}_V^G}\sum_{j=1}^{\ell}\mathfrak{c}_{\Theta_j}^G\leq \sum_{j=1}^{\ell}\dfrac{p-1}{2p}\|u_{\infty,j}\|_V^2=\widehat{c}_V^G\leq \inf_{(\Theta_1,\dots,\Theta_{\ell})\in \mathcal{P}_V^G}\sum_{j=1}^{\ell}\mathfrak{c}_{\Theta_j}^G.	
\end{align*}
This proves $(P_4)$ and shows that $(\Omega_1,\dots, \Omega_{\ell})$ is an optimal $(G,\vr_\ell)$-pinwheel partition for the problem  \eqref{eq:single}.

$(iii):$ \ As shown in $(ii)$, $u_{k,i}$ is uniformly bounded in $\mathcal{C}^{0,\alpha}(\Omega)$ for every bounded open subset $\Omega$ of $\mathbb{R}^N$ and $\alpha\in (0,1)$. Consequently, by the Arzela-Ascoli theorem, $u_{k,i}\rightarrow u_{\infty,i}$ in $\mathcal{C}^{0,\alpha}(\Omega)$. This ensures that all hypotheses of \cite[Theorem C.1]{cpt} are satisfied. Therefore, $(iii)$ holds true.

$(iv):$ \ Consider $\ell=2$. In this case, $\varrho_2=\tau$. The nontrivial $G$-invariant solutions of problem \eqref{eq:single} that satisfy the condition 
\begin{align}\label{antisymetritau}
	u(\tau x)=-u(x) \quad \mbox{ for all } x\in \mathbb{R}^N
\end{align}
are the critical points of the functional $J$, defined in \eqref{functionalJsingle}, restricted to the Nehari manifold
\begin{align*}
	\mathcal{M}_V^{(G,\tau)}:= \{ u\in H_0^1(\rn)^G:u\neq 0, \ \|u\|_V^2=|u|_{2p}^{2p}, \ u\text{ satisfies }\eqref{antisymetritau}\}.
\end{align*}
Note that every such solution is nonradial and changes sign. There is a one-to-one correspondence 
\begin{align*}
	\mathcal{W}_V^G\to \mathcal{M}_V^{(G,\tau)},\quad (u_1,u_2)\mapsto u_1-u_2,
\end{align*}
whose inverse is $u\mapsto (u^+,u^{-})$, where $u^+=\max\{0,u\}$, $u^{-}=\min \{0,u\}$. It satisfies
\begin{align*}
	\mathcal{J}_V(u_1,u_2)=\dfrac{p-1}{2p}\left(\|u_1\|_V^2+\|u_2\|_V^2\right)= J(u_1-u_2).
\end{align*}
Therefore,
\begin{align*}
	J(u_{\infty,1}-u_{\infty,2})=\mathcal{J}_V((u_{\infty,1}, u_{\infty,2}))=\inf_{(u_1,u_2)\in \mathcal{W}_V^G}\mathcal{J}_V((u_1,u_2))=\inf_{u\in \mathcal{M}_V^{(G,\tau)}}J(u).
\end{align*}
Thus, $u_{\infty,1}-u_{\infty,2}\in H^{(G,\tau)}$ is a nonradial $G$-invariant sign-changing solution of \eqref{eq:single}.
\end{proof}
 
\begin{proof}[Proof of Theorem \ref{theorembetainfinity2}]
To find the points $y_k\in\r^{N-4}$ we argue as in Theorem \ref{thm:betatozero2}. Then, we follow the proof of Theorem \ref{theorembetainfinity1}.
\end{proof}

\bigskip

\begin{flushleft}
\textbf{Mónica Clapp} and \textbf{Víctor A. Vicente-Benítez}\\
Instituto de Matemáticas\\
Universidad Nacional Autónoma de México \\
Campus Juriquilla\\
Boulevard Juriquilla 3001\\
76230 Querétaro, Qro., Mexico\\
\texttt{monica.clapp@im.unam.mx} \\
\texttt{va.vicentebenitez@im.unam.mx} 
\medskip

\textbf{Mayra Soares}\\
Departamento de Matemática\\
Universidade de Brasília\\
Campus Darci Ribeiro\\
Asa Norte, Brasília\\ 70910-900, Brazil\\
\texttt{mayra.soares@unb.br}
\medskip

\textbf{Alberto Saldaña}\\
Instituto de Matemáticas\\
Universidad Nacional Autónoma de México \\
Circuito Exterior, Ciudad Universitaria\\
04510 Coyoacán, Ciudad de México, Mexico\\
\texttt{alberto.saldana@im.unam.mx}
\end{flushleft}
	
\end{document}